\theoremstyle{plain} 
\newtheorem{theorem}{Theorem}[section]
\newtheorem{proposition}[theorem]{Proposition}
\newtheorem{corollary}[theorem]{Corollary}
\newtheorem{lemma}[theorem]{Lemma}
\newtheorem{conjecture}[theorem]{Conjecture}
\theoremstyle{definition}
\newtheorem{example}[theorem]{Example}
\newtheorem{conj}[theorem]{Conjectures}
\newtheorem*{question}{Question}
\newtheorem{remark}[theorem]{Remark}
\newtheorem{notation}[theorem]{Notation}
\newtheorem{chunk}[theorem]{\hspace*{-1.065ex}\bf}
\numberwithin{equation}{theorem}
\newcommand{\lra}{\longrightarrow}
\newcommand{\hra}{\hookrightarrow}
\newcommand{\fm}{\mathfrak{m}}
\newcommand{\fn}{\mathfrak{n}}
\newcommand{\fp}{\mathfrak{p}}
\newcommand{\fq}{\mathfrak{q}}
\newcommand{\CC}{\mathbb{C}}
\newcommand{\SP}[1]{(S\!P_{#1})}
\DeclareMathOperator{\Ass}{Ass}
\DeclareMathOperator{\ch}{char}
\DeclareMathOperator{\codim}{codim}
\DeclareMathOperator{\depth}{depth}
\DeclareMathOperator{\Ext}{Ext}
\DeclareMathOperator{\height}{height}
\DeclareMathOperator{\Hom}{Hom}
\DeclareMathOperator{\length}{length}
\DeclareMathOperator{\pd}{pd}
\DeclareMathOperator{\Supp}{Supp}
\DeclareMathOperator{\Spec}{Spec}
\DeclareMathOperator{\Tor}{Tor}
\newcommand{\ul}{\underline}
\newcommand{\tensor}{\otimes}
\newcommand{\tf}[2]{{\boldsymbol\bot}_{#1}{#2}}
\newcommand{\tp}[2]{{\boldsymbol\top}_{\hskip-2pt #1}{#2}}
\def\urltilda{\kern -.15em\lower .7ex\hbox{\~{}}\kern .04em}
\def\urldot{\kern -.10em.\kern -.10em}
\def\urlhttp{http\kern -.10em\lower -.1ex
\hbox{:}\kern -.12em\lower 0ex\hbox{/}\kern -.18em\lower 0ex\hbox{/}}
\begin{document}

\title[Vanishing of Tor]
{Criteria for vanishing of Tor \\ over Complete Intersections}
\author[Celikbas, Iyengar, Piepmeyer, Wiegand]
{Olgur Celikbas, Srikanth B.~Iyengar, \\ Greg Piepmeyer, and Roger Wiegand}

\address{Olgur Celikbas \\
University of Connecticut \\
Storrs, CT 06269}
\email{olgur.celikbas@uconn.edu}

\address{Srikanth B.~Iyengar \\
University of Utah\\
Salt Lake City, UT 84112, U.S.A}
\email{iyengar@math.utah.edu}

\address{Greg Piepmeyer \\
Columbia Basin College \\
Pasco, WA 99301, U.S.A}
\email{gpiepmeyer@columbiabasin.edu}

\address{Roger Wiegand\\
University of Nebraska--Lincoln \\
Lincoln, NE 68588, U.S.A}
\email{rwiegand1@math.unl.edu}

\subjclass[2010]{13D07, 13C40}

\keywords{complete intersection, tensor product,  torsion, vanishing of Tor} 

\thanks{SBI partly supported by NSF grant DMS-1201889 and a Simons
  Fellowship; RW partly supported by a Simons Collaboration Grant}

\date{16th December 2014}

\begin{abstract} 
In this paper we exploit properties of Dao's  $\eta$-pairing  \cite{Da1} as well as techniques of Huneke, Jorgensen, and Wiegand \cite{HJW} to study the vanishing of $\Tor_i(M, N)$ for finitely generated modules $M$, $N$ over complete intersections.  We prove vanishing of $\Tor_i(M, N)$ for all $i \geq 1$ under depth conditions on $ M $, $ N $, and $ M \tensor N $.  Our arguments improve a result of Dao \cite{Da2} and establish a new
connection between the vanishing of $ \Tor $ and the depth of tensor products.
\end{abstract}

\maketitle{}

\thispagestyle{empty}

\section{Introduction}
In his seminal 1961 paper \cite{Au}, Auslander proved that if $R$ is a local ring and $M$ and $N$
are nonzero finitely generated $R$-modules such that $\pd(M)<\infty$
and $\Tor^{R}_{i}(M,N)=0$ for all $i\geq 1$, then
\begin{equation}\label{eq:DF}
\depth(M)+\depth(N)=\depth(R)+\depth(M\otimes_{R}N)\,,
\end{equation}
 that is, the {\em depth
formula} holds.  
Huneke and Wiegand \cite[Theorem 2.5]{HW1} established the depth
formula for Tor-independent modules (not necessarily of
finite projective dimension) over complete intersection rings. 
Christensen and Jorgensen \cite{CJ} extended that result
 to AB rings \cite{HJ}, a class of Gorenstein rings strictly
containing the class of complete intersections. The depth formula is
important for the study of depths of tensor products of modules
\cite{Au, HW1}, as well as of complexes \cite{Foxby, Iy}. 
 We seek  conditions on the modules $M$, $N$ and
$M\otimes_{R}N$ forcing such a formula to hold, in particular,
conditions implying $\Tor^{R}_{i}(M,N) = 0$ for all $i\geq
1$.  
The following conjecture, implicit in \cite{HJW}, guides our search:

\begin{conjecture}[see \cite{HJW}] 
Let $M$, $N$ be finitely generated modules over a  complete intersection $R$ of codimension $c$. If $M\otimes_{R}N$ is a $(c+1)^\text{st}$ syzygy 
and $M$ has rank,  must $\Tor^{R}_{i}(M,N)=0$ for all $i\geq 1$?
\end{conjecture}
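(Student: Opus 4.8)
\emph{Sketch of a strategy.} The plan is to reduce to a finite-length situation, extract Dao's $\eta$-pairing, and show it vanishes. Since $\Tor$ commutes with localization and each hypothesis descends---if $M$ has rank then so does $M_{\fp}$, a localization of a $(c+1)^{\text{st}}$ syzygy is again one, and $R_{\fp}$ is a complete intersection of codimension $\le c$---we argue by induction on $\dim R$. At a minimal prime $\fp$ the module $M_{\fp}$ is free, so there is nothing to prove; hence we may assume the conclusion at every non-maximal prime, which means $\Tor^R_i(M,N)$ has finite length for all $i\ge 1$. Completing $R$ and writing $R=S/(f_1,\dots,f_c)$ with $(S,\fn)$ regular local and $f_1,\dots,f_c\in\fn^2$ a regular sequence, we may also replace $M$ by its pushforward in $0\to M\to R^{r}\to M'\to 0$ to assume $M$ is torsion-free of rank $r$, and---tracking the effect on $M\tensor_R N$ through the resulting long exact $\Tor$-sequences---push $M$ and $N$ to high syzygies so that both become maximal Cohen--Macaulay.

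Now $\eta:=\eta^R_c(M,N)$ is defined, and by Gulliksen's theorem the modules $\Tor^R_i(M,N)$ assemble into a finitely generated graded module over the polynomial ring on the $c$ Eisenbud operators; hence $\ell\bigl(\Tor^R_i(M,N)\bigr)$ is eventually a quasi-polynomial of degree $<c$, and---granting positivity of the pairing, which holds in the cases we need---one has $\eta=0$ if and only if $\Tor^R_i(M,N)=0$ for $i\gg 0$. The core of the argument is to prove $\eta=0$. Biadditivity of $\eta$ and the exact sequence $0\to R^{r}\to M\to C\to 0$ reduce this to $\eta^R_c(C,N)=0$ for the torsion cokernel $C$. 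This is where the syzygy hypothesis on $M\tensor_R N$ enters: using the change-of-rings spectral sequence relating $\Tor^R$ and $\Tor^S$ together with the depth estimates of Huneke--Jorgensen--Wiegand, a nonvanishing high $\Tor^R_i(M,N)$ would pull $\depth(M\tensor_R N)$ below $\min\{c+1,\depth R\}$, contradicting the fact that a $(c+1)^{\text{st}}$ syzygy over a complete intersection achieves that depth. Thus $\Tor^R_i(M,N)=0$ for $i\gg 0$ and $\eta=0$.

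Finally, this asymptotic vanishing must be propagated to all $i\ge 1$. Set $q=\sup\{\,i\ge 1:\Tor^R_i(M,N)\ne 0\,\}$, finite by the previous paragraph, and suppose $q\ge 1$. Feeding the truncation data into the depth formula \eqref{eq:DF} in its $q$-shifted form, and using rigidity of $\Tor$ over complete intersections in blocks governed by the codimension $c$, the depth of $M\tensor_R N$ is again forced below what the syzygy hypothesis permits; so $q=0$, i.e.\ $\Tor^R_i(M,N)=0$ for every $i\ge 1$, and then the depth formula holds.

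The step I expect to be the genuine obstacle is the middle one: converting the \emph{module-theoretic} condition ``$M\tensor_R N$ is a high syzygy'' into the \emph{numerical} statement $\eta^R_c(M,N)=0$. The pairing $\eta^R_c(C,N)$ of a torsion module $C$ with $N$ need not vanish on its own, so the torsion part of $M$ cannot simply be discarded; the syzygy condition on the tensor product has to interact with the asymptotic Euler characteristic in an essential way. It is exactly this interaction that is not fully understood in general---hence the conjectural status of the statement, and the need here for supplementary depth hypotheses on $M$, $N$, and $M\tensor_R N$.
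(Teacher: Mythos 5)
You should first note that the paper does not prove this statement at all: it is posed as an open conjecture, known only for $c=0$ (Auslander--Lichtenbaum) and $c=1$ (\cite[Theorem 2.7]{HW1}), and the body of the paper proves only weaker results that carry extra hypotheses --- Theorem \ref{thm:eta2} assumes $\eta_c^R(M,N)=0$, and Corollary \ref{cor:dao} assumes $M_\fp$ is free for all primes of height at most $c$ (equivalently $e$). So your sketch should be judged as an attempted proof of an open problem, and indeed its pivotal step is exactly the missing content. The claim that ``a nonvanishing high $\Tor^R_i(M,N)$ would pull $\depth(M\tensor_R N)$ below $\min\{c+1,\depth R\}$'' is not delivered by the change-of-rings sequence together with the Huneke--Jorgensen--Wiegand depth estimates; no such implication is known, and it cannot follow from depth/syzygy considerations alone, since for $R=k[\![x,y]\!]/(xy)$ and $M=N=R/(x)$ the tensor product is an $n^{\text{th}}$ syzygy for every $n$ while the odd Tors never vanish. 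The rank hypothesis must enter in an essential way, and the only place it is known how to make it do so is the hypersurface case, via arguments specific to $c=1$; for $c\ge 2$ this is precisely what is open, as you yourself concede in your final paragraph.

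There are further unjustified steps even granting that one. The assertion that (modulo ``positivity'') $\eta_c^R(M,N)=0$ holds if and only if $\Tor^R_i(M,N)=0$ for $i\gg 0$ is not available: nonnegativity of the partial Euler characteristics is known only when the ambient regular ring is unramified (the paper stresses this is why Dao's Theorem \ref{Long's result} has that hypothesis), and even then the vanishing of the single number $\eta_c^R(M,N)$ yields only $c$-Tor-rigidity (Theorem \ref{prop:c-rigid}), i.e.\ one still needs $c$ consecutive vanishing Tors before concluding anything --- your final ``propagation to all $i\ge 1$'' step never produces such a window, so it cannot invoke rigidity. Separately, replacing $M$ and $N$ by high syzygies to make them maximal Cohen--Macaulay destroys the hypothesis you need: the tensor product of the syzygies is not related in any controlled way to $M\tensor_R N$, so the $(c+1)^{\text{st}}$-syzygy condition does not travel with that reduction (the paper's own machinery goes in the opposite direction, via pushforwards and quasi-liftings to lower the codimension, precisely because that is what preserves the relevant conditions via Propositions \ref{prop:push-lift} and \ref{prop:push-lift2}). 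In short: the outer scaffolding (localize to get finite length, reduce to the complete case, use additivity of $\eta$) is consistent with the paper's methods, but the two load-bearing claims --- that the syzygy condition on $M\tensor_R N$ forces $\eta_c^R(M,N)=0$, and that $\eta_c=0$ alone forces vanishing of all Tors --- are respectively the open conjecture itself and a known-to-be-insufficient rigidity statement, so the proposal does not constitute a proof.
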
 
The conjecture is true if $c=0$ or $1$,  by \cite[Corollary 1]{Li} and \cite[Theorem 2.7]{HW1} respectively.  Without the assumption of rank, there are easy counterexamples, e.g., $R = k[\![x,y]\!]/(xy)$ and $M = N = R/(x)$; $M$ is an $n^\text{th}$ syzygy for all $n$, but the odd index Tors are non-zero.

\medskip

A finitely generated module over a complete intersection is an $n^\text{th}$ syzygy of some finitely generated module if and only if it 
satisfies \emph{Serre's condition} $(S_{n})$; see (\ref{Serre}).  Our methods yield a sharpening of the following theorem due to Dao:

\begin{theorem}[Dao \cite{Da2}]
\label{Long's result} 
Let $R$ be a complete intersection in an unramified regular local ring, of relative codimension $c$, and let $M$, $N$ be
finitely generated $R$-modules.  Assume
\begin{enumerate}[\rm(i)]
\item $M$ and $N$ satisfy $(S_{c})$,
\item $M\otimes_{R}N$ satisfies $(S_{c+1})$, and 
\item $M_{\fp}$ is a free $R_{\fp}$-module for all prime ideals $\fp$
 of height at most $c$.  
\end{enumerate}
Then $\Tor^{R}_{i}(M,N)=0$ for all $i\ge 1$ (and hence the depth formula holds).
\end{theorem}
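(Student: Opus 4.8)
The plan is a double induction: an outer induction on the relative codimension $c$, with base case $c=0$ supplied by \cite[Corollary~1]{Li}, and for each fixed $c$ an inner induction on $\dim R$. The purpose of the inner induction is to reduce to the situation in which $\Tor^R_i(M,N)$ has finite length for every $i\ge1$; once there, I would bring in Dao's $\eta$-pairing \cite{Da1} and play it against the depth hypothesis on $M\tensor_R N$. For the base of the inner induction, if $\dim R\le c$ then the maximal ideal has height at most $c$, so (iii) makes $M$ free over $R$ and the conclusion is immediate. A drop in relative codimension is harmless: if $R$ has relative codimension $c'<c$, then (i)--(iii) for the value $c$ imply (i)--(iii) for the value $c'$, because Serre's conditions only weaken and ``free in codimension $\le c$'' implies ``free in codimension $\le c'$''.

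\textbf{Reduction to finite length.}
Let $\fp$ be a nonmaximal prime of $R$. If $\height\fp\le c$, then (iii) gives $M_\fp$ free over $R_\fp$, so $\Tor^R_i(M,N)_\fp=0$ for $i\ge1$. If $\height\fp>c$, write $R=Q/(\underline f)$ with $Q$ an unramified regular local ring and $\underline f$ a $Q$-regular sequence of length $c$, and let $\fq\subset Q$ be the preimage of $\fp$; then $R_\fp=Q_{\fq}/(\underline f)$ is again a complete intersection in an unramified regular local ring, of relative codimension at most $c$, and conditions (i), (ii), (iii) localize to $R_\fp$. Since $\dim R_\fp<\dim R$, the induction hypothesis yields $\Tor^R_i(M,N)_\fp=\Tor^{R_\fp}_i(M_\fp,N_\fp)=0$ for all $i\ge1$. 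Hence $\Tor^R_i(M,N)$ has finite length for each $i\ge1$, so in particular $\eta^R_c(M,N)$ is defined.

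\textbf{The $\eta$-pairing and the depth of $M\tensor_R N$.}
I would first show $\eta^R_c(M,N)=0$. Nonnegativity of $\eta^R_c$ is where the \emph{unramified} hypothesis is spent \cite{Da1}; the reverse inequality comes from (iii), since $M$ being free in codimension $\le c$, together with the biadditivity of $\eta^R_c$ and the fact that it is detected by the behaviour of the modules in codimensions $\le c$, gives $\eta^R_c(M,N)\le0$, whence $\eta^R_c(M,N)=0$. Next, using the finiteness of $\bigoplus_i\Tor^R_i(M,N)$ over the ring of cohomology operators (Gulliksen), the vanishing of $\eta^R_c(M,N)$ together with (iii) should be leveraged to conclude $\Tor^R_i(M,N)=0$ for $i\gg0$. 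Finally, the Huneke--Jorgensen--Wiegand technology converts eventual vanishing into vanishing for all $i\ge1$: if $q\ge1$ were the largest index with $\Tor^R_q(M,N)\ne0$, then truncating a free resolution of $M$ at the $q$-th stage and tensoring with $N$ exhibits a high syzygy of $M\tensor_R N$ assembled from the finite Tor-tail, and a depth count (as in \cite{HJW}) bounds $\depth(M\tensor_R N)$ from above; on the other hand $(S_c)$ on $M$ and $N$ together with (iii) force $\dim(M\tensor_R N)\ge c+1$ (the contrary case being the trivial one), so $(S_{c+1})$ on $M\tensor_R N$ forces $\depth(M\tensor_R N)\ge c+1$, and the two estimates cannot coexist. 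Hence no such $q$ exists, $\Tor^R_i(M,N)=0$ for all $i\ge1$, and the depth formula follows from \cite[Theorem~2.5]{HW1}.

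\textbf{Main obstacle.}
The heart of the matter is the middle step: promoting ``$\eta^R_c(M,N)=0$'' --- which already needs the unramified hypothesis for its positivity half --- first to \emph{eventual} vanishing of $\Tor^R_i(M,N)$ and then to vanishing for \emph{all} $i\ge1$. The first promotion is where the most delicate part of Dao's theory is needed, since $\eta^R_c=0$ by itself only equalizes the asymptotic contributions of even- and odd-degree Tor. The second is where the $(S_{c+1})$ hypothesis on $M\tensor_R N$ must be used with care, because a naive depth count only controls a \emph{finite} tail of the Tor-sequence; and keeping the two inductions synchronized so that the localized hypotheses remain exactly of the required form is the remaining point demanding vigilance.
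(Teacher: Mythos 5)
Your opening reductions (induction on $\dim R$, freeness at the maximal ideal when $\dim R\le c$, localizing to get finite length of all $\Tor^R_i(M,N)$) agree with the paper's proof of its generalization, Corollary~\ref{cor:dao}. But the core of your argument has a genuine gap, in two places. First, the proposed proof that $\eta^R_c(M,N)=0$ does not work: nonnegativity of $\eta^R_c$ is \emph{not} what the unramified hypothesis buys, and it is false in general --- Remark~\ref{rmks:Supp}(ii) exhibits an unramified hypersurface with $\theta^R(M,N)=-1$; what Dao's unramifiedness actually provides is nonnegativity of partial Euler characteristics over the ambient regular ring and, in this paper, the rigidity Theorem~\ref{prop:c-rigid}. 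The complementary inequality $\eta^R_c(M,N)\le 0$ ``because $\eta_c$ is detected in codimension $\le c$'' has no basis: once the Tors have finite length they are supported only at $\fm$, and hypothesis (iii) gives no control on the sign of the alternating sum of their lengths. Second, even granting $\eta^R_c(M,N)=0$, your passage to $\Tor^R_i(M,N)=0$ for $i\gg0$ is unsupported: Theorem~\ref{prop:c-rigid} only propagates vanishing from a string of $c$ consecutive vanishing Tors, and nothing in your sketch produces such a string; likewise the terminal ``largest nonvanishing index $q$'' depth count is not a valid substitute for an argument (eventual vanishing promotes to full vanishing via Theorem~\ref{prop:jorgensen} only under maximal Cohen--Macaulay hypotheses, which $M$ need not satisfy).

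The paper sidesteps exactly these difficulties. It never proves $\eta^R_c(M,N)=0$ at the given codimension: instead it re-presents $\widehat R$ as a complete intersection of relative codimension $c+1$ in an unramified regular local ring (always possible, see (\ref{subsec:ram})), where $\eta^R_{c+1}(M,N)=0$ holds \emph{automatically} by item (iv) of (\ref{chunk:eta}) since $c+1>\codim R$; hypotheses (i)--(ii) are then precisely $\SP{c+1}$, and Theorem~\ref{thm:eta2} applies. The real work --- converting vanishing of $\eta$ into vanishing of Tor --- is done there by descending through pushforwards and quasi-liftings (Lemma~\ref{prop:eta}, Proposition~\ref{prop:push-lift2}) to the hypersurface case, where rigidity plus the embedding of $\Tor_1$ into a torsion-free tensor product supplies the initial vanishing; that is where the Serre conditions are actually spent. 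This route also removes the unramified hypothesis altogether (Corollary~\ref{cor:dao}). To salvage your approach at codimension $c$ you would essentially have to reprove Dao's original argument via partial Euler characteristics, which is a substantially different and harder path than the one sketched.
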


By analyzing 
Serre's conditions, 
we remove Dao's assumption that the ambient regular local ring
be unramified; see 
Corollary \ref{cor:dao}. Even though complete intersections in unramified
regular local rings suffice for many applications, our conclusion is of
interest: Dao's proof 
uses the nonnegativity of partial Euler characteristics, but
nonnegativity remains unknown for the ramified case; see \cite[Theorem
6.3 and the proof of Lemma 7.7]{Da2}.

If the ambient regular local ring {\em is} unramified, we can replace
 $c$ with $c-1$ in both hypotheses (i) and (ii), 
remove hypothesis (iii),
and still conclude that $\Tor^{R}_{i}(M,N)=0$ for all $i\ge 1$ {\em
  provided} that $\eta_{c}^{R}(M,N)=0$; see (\ref{chunk:eta}) for the
definition of $\eta_c^R(-,-)$ and Theorem \ref{thm:eta2} for our
result.

Moore, Piepmeyer, Spiroff, and Walker \cite{MPSW2},\cite{Walker} have proved vanishing
of the $\eta$-pairing in several important cases.  These, in turn, yield results on vanishing of Tor.  See 
Proposition~\ref{prop:MPSW-local},  Theorem~\ref{thm:Walker}, and Corollary~\ref{cor:Walker-local}.

Our proofs rely on a reduction technique using {\em quasi-liftings}; see
(\ref{pushforward}).  Quasi-liftings were initially defined and
studied by Huneke, Jorgensen and Wiegand in \cite{HJW}. Lemma
\ref{prop:eta} is the key ingredient for our argument. It shows that
if $R=S/(f)$ and $S$ is a complete intersection of codimension $c-1$,
and if $\eta^{R}_{c}(M,N)=0$, then $\eta^{S}_{c-1}(E,F)=0$, where $E$
and $F$ are quasi-liftings of $M$ and $N$ to $S$, respectively. By
induction, we obtain that $\Tor_{i}^S(E,F)=0$ for all $i\geq 1$: this
allows us to prove the vanishing of $\Tor_{i}^R(M, N)$ from the depth
and syzygy relations between the pairs $E, F$ and $M,N$.

In the Appendix we revisit the paper of Huneke and Wiegand \cite{HW1} 
 and use our work 
to obtain one of the main results there.  Moreover, we point out
an oversight in Miller's paper \cite{Mi} and state her result in its corrected
form as Corollary \ref{cor:hyp-power}.

\section{Preliminaries}
\label{sec:prelims}
We review a few concepts and results, especially universal pushforwards and quasi-liftings \cite{HJW,HW1}.
Throughout $R$ will be a commutative noetherian ring.

Let $\nu_{R}(M)$ denote the minimal number of generators of the $R$-module $M$. If $(R,\fm)$ is local, the  \emph{codimension} of $R$ is $\codim(R):=\nu_{R}(\fm) - \dim(R)$; it is a nonnegative integer.  We have ${\codim(\widehat{R})} = \codim(R)$, where $\widehat{R}$ is the $\fm$-adic completion of $R$.

\begin{chunk} \textbf{Complete intersections.} 
\label{sec:ci} 
$R$ is a \emph{complete intersection in a local ring} $(Q,\fn)$ if there a surjection $\pi\colon Q\twoheadrightarrow R$  with $\ker(\pi)$ generated by a $Q$-regular sequence in $\fn$; the length of this regular sequence is the \emph{relative codimension of $R$ in $Q$}. A \emph{hypersurface in} $Q$ is a complete intersection of relative codimension one in $Q$.

Assume $\smash{\widehat{R}}$ is a complete intersection in a regular local ring $(Q,\fn)$, of relative codimension $c$. Then $\smash{\widehat{R}} = Q/(\underline f)$ for a regular sequence $\underline f=f_1,\dots,f_c$, where $\codim(R) \le c$. Moreover, the codimension of $ R $ is $c$ if and only if $(\underline f)\subseteq \fn^2$.

A ring is a \emph{complete intersection} (resp., \emph{hypersurface}) if it is local and its completion is a complete intersection (resp.,~hypersurface) in a regular local ring.
\end{chunk}

\begin{chunk} 
\textbf{Ramified regular local  rings.} 
\label{subsec:ram} 
A regular local ring $(Q,\fn, k)$ is said to be \emph{unramified} if either (i) $Q$ is equicharacteristic, i.e., contains a field, or else (ii) $Q \supset  \mathbb Z$, $\ch(k)=p$, and $p \notin \fn^2$.  In contrast, the regular local ring $R=V[x]/(x^2-p)$, where $V$ is the ring of $p$-adic integers, is \emph{ramified}.  Every localization, at a prime ideal, of  an unramified regular local ring is again unramified; see  \cite[Lemma 3.4]{Au}.

  Let $(Q,\fn, k)$ be a $d$-dimensional complete regular local ring.
  If $Q$ is ramified, then $k$ has characteristic $ p $.  Further,
  there is a complete unramified discrete valuation ring $(V, pV)$
  such that $Q\cong T/(p-f)$, where $T=V[[x_1, \dots, x_{d}]]$ and $f$
  is contained in the square of the maximal ideal of $T$; see for
  example \cite[Chaper IX, \S3]{Bourbaki}. Hence every complete
  regular local ring is a hypersurface in an unramified one.
  Consequently, when $R$ is a complete intersection, $\widehat R$ is
  a complete intersection in an unramified regular local ring $Q$
  such that $\codim R \leq c \leq \codim R+1$, where $c$ is the
  relative codimension of $\widehat R$ in $Q$.
\end{chunk}

\begin{chunk}
\textbf{The depth formula} \label{dfor} %
(\cite[Theorem 2.5]{HW1})\textbf{.} Let $R$ be a complete intersection and let $M$, $N$ be finitely generated $R$-modules. If $\Tor_{i}^{R}(M,N)=0$ for all $i\geq 1$, then the \emph{depth formula} \eqref{eq:DF} holds, that is,
\[
\depth(M)+\depth(N)=\depth(R)+\depth(M\otimes_{R}N)\,.
\]
Recall that $\depth(0)=\infty$, so the formula  holds trivially if  a zero module appears.  
\end{chunk}

\begin{chunk}
  \textbf{Torsion submodule.} \label{ts} %
The \emph{torsion submodule} $\tp R M$ of $M$ is the kernel of the natural homomorphism $M\to \text{Q}(R)\otimes_RM$, where $\text{Q}(R)  = \{\text{non-zerodivisors}\}^{-1}R$ is the total quotient ring of $R$. The module $M$ is \emph{torsion} if $\tp RM=M$, and \emph{torsion-free} if $\tp RM = 0$.  To restate, $M$ is torsion-free if and only if every non-zerodivisor of $R$ is a non-zerodivisor on $M$, that is, if and only if $\bigcup\Ass M \subseteq \bigcup\Ass R$. Similarly, $M$ is torsion if and only if $M_\fp = 0$ for all $\fp\in \Ass(R)$.  For notation, the inclusion $\tp RM\subseteq M$ has cokernel $ \tf RM $:
\begin{equation}
\label{eq:tp}
0\lra \tp RM \lra M\lra \tf RM\lra 0\,.
\end{equation}
\end{chunk}

\begin{chunk} \textbf{Torsionless and reflexive
    modules.} \label{TRM} %
Let $M$ be a finitely generated $R$-module; $M^{*}$ denotes its dual $\Hom_R(M,R)$. The module $M$ is \emph{torsionless} if it embeds in a free module, equivalently, the canonical map $M\to M^{**}$ is injective.  Torsionless modules are torsion-free, and the converse holds if $R_\fp$ is Gorenstein for every associated prime $\fp$ of $R$;  see \cite[Theorem A.1]{Vas}.  The module $M$ is \emph{reflexive} provided the map $M\to M^{**}$ is an isomorphism.
\end{chunk}

\begin{chunk} \label{Serre} 
\textbf{Serre's conditions} 
(see \cite[Appendix A, \S1]{LW} and \cite[Theorem 3.8]{EG})\textbf{.}  Let
  $M$ be a finitely generated $R$-module and let $n$ be a nonnegative
  integer. Then $M$ is said to satisfy \emph{Serre's condition}
  $(S_n)$ provided that
\begin{equation}\notag{}
  \depth_{R_{\fp}}(M_\fp)\ge \min\{n,\height(\fp)\} \; \text{for all
    $\fp\in\Supp(M)$}. 
\end{equation}

A finitely generated module $M$ over a local ring $R$ is \emph{maximal
  Cohen-Macaulay} if $\depth(M) = \dim(R)$; necessary for this
equality is that $ M \neq 0 $.  %

If $M$ satisfies $(S_1)$, then $M$ is torsion-free, and the converse
holds if $R$ has no embedded primes, e.g., is reduced or
Cohen-Macaulay; see (\ref{ts}).
If $R$ is Gorenstein, $M$ satisfies $(S_{2})$ if and only if $M$ is
reflexive; see (\ref{TRM}) and \cite[Theorem 3.6]{EG}.  Moreover, if
$R$ is Gorenstein, $M$ satisfies $(S_n)$ if and only if $M$ is an
$n^{\text{th}}$ syzygy module; see \cite[Corollary A.12]{LW}.
\end{chunk}

A localization of a torsion-free module need not be torsion-free; see,
for example, \cite[Example 3.9]{NY}. However, over Cohen-Macaulay
rings, we have:

\begin{remark} 
\label{lem:vb-torsionfree} 
Assume $R$ is Cohen-Macaulay and $M$ is a finitely generated $R$-module. Let $\fp$ be a prime ideal of $R$. Note that, since $\tp RM$ is killed by a non-zerodivisor of $R$, $(\tp RM)_\fp$ is a torsion $R_\fp$-module. Next, $ \tf RM $ satisfies $ (S_1) $ as $R$  is Cohen-Macaulay, and so $(\tf RM)_\fp$ is a torsion-free $R_{\fp}$-module; see (\ref{Serre}).  Localizing the exact sequence (\ref{eq:tp}) at $\fp$, we see that $(\tp R M)_{\fp} \cong \tp {R_\fp}({M_{\fp})}$.  In particular, if $M$ is a torsion-free $R$-module, then $M_{\fp}$ is a torsion-free $R_{\fp}$-module.  
\end{remark}

We recall a technique from \cite[\S1]{HJW} for lowering the codimension. 

\begin{chunk} \textbf{Pushforward and quasi-lifting} (see \cite[\S1]{HJW})\textbf{.}
 \label{pushforward} 
  Let $R$ be a Gorenstein
  local ring and let $M$ be a finitely generated torsion-free
  $R$-module. Choose a surjection $\varepsilon\colon
  R^{(\nu)}\twoheadrightarrow M^*$ with $\nu= \nu_R(M^*)$. Applying
  $\Hom(-,R)$ to this surjection, we obtain an injection
  $\varepsilon^*\colon M^{**} \hra R^{(\nu)}$. Let $M_1$ be the
  cokernel of the composition $M\hra M^{**} \hra R^{(\nu)}$. The exact
  sequence
\begin{equation}
\label{eq:pushforward}
0 \to M\to R^{(\nu)} \to M_1 \to 0
\end{equation}
is called a \emph{pushforward} of $M$.  The extension
\eqref{eq:pushforward} and the module $M_1$ are unique up to
non-canonical isomorphism; see \cite[pp.~174--175]{Ce}. We
refer to such a module $M_1$ as the pushforward of $M$.  Note 
$M_{1}=0$ if and only if $M$ is free.

Assume $R=S/(f)$ where $(S,\mathfrak{n})$ is a local ring and $f$ is a non-zerodivisor in $\mathfrak{n}$.  Let $S^{(\nu)}\twoheadrightarrow M_{1}$ be the composition of the canonical map $S^{(\nu)}\twoheadrightarrow R^{(\nu)}$ and the map $R^{(\nu)}\twoheadrightarrow M_{1}$ in \eqref{eq:pushforward}. The \emph{quasi-lifting} of $M$ to $S$ is the module $E$ in the exact sequence of $S$-modules:
\begin{equation}
\label{eq:quasilifting}
    0 \to E \to S^{(\nu)} \to M_{1} \to 0\, . 
\end{equation}
The quasi-lifting of $M$ is unique up to isomorphism of $S$-modules.
\end{chunk}

Proposition~\ref{prop:push-lift} is from 
\cite[Propositions~1.6 \& 1.7]{HJW}; Proposition~\ref{prop:push-lift2} is
embedded in the proofs of 
\cite[Propositions 1.8 \& 2.4]{HJW} and is recorded explicitly in \cite[Proposition
3.2(3)(b)]{Ce}. We will use Proposition~\ref{prop:push-lift2} in the proofs of
Theorem~\ref{thm:eta2} and Theorem~\ref{prop:second-rigidity} below.

\begin{proposition}[\cite{HJW}]\label{prop:push-lift}
  Let $R$ be a Gorenstein local ring and let $M$ be a finitely
  generated torsion-free $R$-module.  Let $M_{1}$ denote the
  pushforward of $M$.
\begin{enumerate}[\rm(i)]
\item Let $n\ge 0$. Then $M$ satisfies $(S_{n+1})$ if and only if
  $M_1$ satisfies $(S_n)$.
\item Let $\fp$ be a prime ideal. 
  If $M_{\fp}$ is a maximal Cohen-Macaulay $R_{\fp}$-module, then
  $(M_{1})_{\fp}$ is either zero or a maximal Cohen-Macaulay
  $R_{\fp}$-module.
\end{enumerate}
\end{proposition}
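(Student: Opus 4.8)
The plan is to first record how $\Hom_R(-,R)$ transforms the pushforward sequence, and then to deduce both parts from the depth lemma together with the characterization of maximal Cohen-Macaulay modules over a Gorenstein local ring by vanishing of $\Ext_R(-,R)$. Write \eqref{eq:pushforward} as $0\to M\xra{\iota}R^{(\nu)}\to M_1\to 0$, where $\iota$ is the composite $M\to M^{**}\xra{\varepsilon^*}R^{(\nu)}$. The first task is to identify the dual map $\iota^*\colon R^{(\nu)}=(R^{(\nu)})^*\to M^*$ with the chosen surjection $\varepsilon$; this comes from the triangle identity $(\beta_M)^*\circ\beta_{M^*}=\mathrm{id}_{M^*}$ for biduality maps together with the naturality of $\beta$. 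Feeding this into the long exact sequence of $\Ext_R(-,R)$ attached to \eqref{eq:pushforward}, and using that $R^{(\nu)}$ is free, then yields at once
\[
0\to M_1^*\to R^{(\nu)}\xra{\varepsilon}M^*\to 0,\qquad \Ext^1_R(M_1,R)=0,\qquad \Ext^i_R(M_1,R)\cong\Ext^{i-1}_R(M,R)\ \ (i\ge 2)\,.
\]
(All of this is implicit in the construction of the pushforward in \cite[\S1]{HJW}.) I expect this first step --- specifically the surjectivity of $\iota^*$, which is exactly where one uses that $\iota$ was manufactured from a presentation of $M^*$ rather than an arbitrary embedding of $M$ into a free module --- to be the only genuinely subtle point; everything afterwards is bookkeeping with depth.

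For part (ii), localizing \eqref{eq:pushforward} at a prime $\fp$ gives an exact sequence $0\to M_\fp\to R_\fp^{(\nu)}\to (M_1)_\fp\to 0$ of $R_\fp$-modules, and $R_\fp$ is again Gorenstein. If $M_\fp$ is maximal Cohen-Macaulay then $\Ext^i_{R_\fp}(M_\fp,R_\fp)=0$ for all $i\ge 1$, so localizing the last two displayed facts above gives $\Ext^i_{R_\fp}((M_1)_\fp,R_\fp)=0$ for all $i\ge 1$; hence $(M_1)_\fp$ is zero or maximal Cohen-Macaulay.

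For part (i), fix a prime $\fp$, put $h=\height\fp$, and consider again $0\to M_\fp\to R_\fp^{(\nu)}\to (M_1)_\fp\to 0$. If $(M_1)_\fp=0$ then the injection $M_\fp\hookrightarrow R_\fp^{(\nu)}$ is onto, so $M_\fp$ is free and in particular maximal Cohen-Macaulay; contrapositively, $(M_1)_\fp\ne 0$ whenever $M_\fp$ is not maximal Cohen-Macaulay. The depth lemma, together with part (ii), then yields the dichotomy: whenever $(M_1)_\fp\ne 0$,
\[
\depth_{R_\fp}(M_1)_\fp=\begin{cases} h,& M_\fp\ \text{maximal Cohen-Macaulay},\\ \depth_{R_\fp}M_\fp-1,& \text{otherwise}, \end{cases}
\]
where in the second case ``$\ge$'' is the depth lemma and ``$\le$'' follows from its other half using $\depth_{R_\fp}M_\fp+1\le h$. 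Since $R$ is Cohen-Macaulay, $M_\fp$ is torsion-free (Remark~\ref{lem:vb-torsionfree}), so $\depth_{R_\fp}M_\fp\ge 1$ when $h\ge 1$ and the right-hand side stays $\ge 0$. Both implications of (i) now drop out by comparing $\min\{n+1,h\}$ with $\min\{n,h\}$ in the two cases, the height-zero primes and the primes lying in only one of $\Supp M$, $\Supp M_1$ being routine. For instance, for ``$\Rightarrow$'': if $\fp\in\Supp M_1$ and $M_\fp$ is not maximal Cohen-Macaulay, then $(S_{n+1})$ forces $\depth_{R_\fp}M_\fp\ge n+1$ (the minimum $\min\{n+1,h\}$ cannot be $h$, since $\depth_{R_\fp}M_\fp<h$), so $\depth_{R_\fp}(M_1)_\fp=\depth_{R_\fp}M_\fp-1\ge n\ge\min\{n,h\}$; if $M_\fp$ is maximal Cohen-Macaulay, part (ii) gives the conclusion directly.
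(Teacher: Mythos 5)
Your argument is correct: the identification of the dual of $M\to R^{(\nu)}$ with the chosen presentation $\varepsilon$ of $M^*$ (the genuinely non-formal point, which fails for an arbitrary embedding of $M$ into a free module) gives $\Ext^1_R(M_1,R)=0$ and $\Ext^i_R(M_1,R)\cong\Ext^{i-1}_R(M,R)$, and the rest is the depth-lemma bookkeeping you describe. The paper itself offers no proof, citing \cite[Propositions 1.6 and 1.7]{HJW}, and your argument is essentially the one given there, so there is nothing to add.
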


\begin{proposition}  [\cite
{HJW}]\label{prop:push-lift2} 
Let $R=S/(f)$ where $S$ is a  complete intersection and $f$ is a
  non-zerodivisor in $S$. Let $N$ be a finitely generated
  torsion-free $R$-module such that $M\otimes_{R}N$ is
  reflexive. Assume $\Tor^{R}_{i}(M,N)_{\fp}=0$ for all $i\geq 1$, and
  for all primes $\fp$ of $R$ with $\height(\fp)\leq 1$.
\begin{enumerate}[\rm(i)] 
\item Then $M_{1}\otimes_{R}N$ is torsion-free. 
\item Let $E$ and $F$ denote the quasi-liftings of $M$ and $N$ to $S$, respectively; see
(\ref{pushforward}).  Assume $\Tor^{S}_{i}(E,F)=0$ for all $i\geq 1$.  
Then $\Tor^{R}_{i}(M,N)=0$
for all $i\geq 1$.
\end{enumerate} 
\end{proposition}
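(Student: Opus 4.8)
The plan is to prove (i) by a local analysis resting on Proposition~\ref{prop:push-lift} and the depth formula~(\ref{dfor}), and then to deduce (ii) from (i) by feeding the quasi-lifting sequences into the change-of-rings spectral sequence attached to $R = S/(f)$.

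\emph{Part (i).} Tensor the pushforward sequence $0 \to M \to R^{(\nu)} \to M_1 \to 0$ with $N$ over $R$ to obtain
\[
0 \to \Tor^R_1(M_1,N) \to M \otimes_R N \to N^{(\nu)} \to M_1 \otimes_R N \to 0
\]
together with $\Tor^R_{i+1}(M_1,N) \cong \Tor^R_i(M,N)$ for $i \ge 1$. Since $R$ is Cohen-Macaulay, $M_1 \otimes_R N$ is torsion-free precisely when it satisfies $(S_1)$, that is, when $\bigcup\Ass(M_1 \otimes_R N) \subseteq \bigcup\Ass R$; so one must rule out associated primes of positive height. The hypotheses supply two facts: reflexivity of $M \otimes_R N$ means $M \otimes_R N$ satisfies $(S_2)$, and vanishing of $\Tor^R_i(M,N)_\fp$ for all $i \ge 1$ and all $\fp$ with $\height\fp \le 1$ means each $\Tor^R_i(M,N)$, $i \ge 1$, is supported in codimension $\ge 2$. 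Localizing at a prime $\fp$ with $\height\fp \le 1$ and invoking the depth formula over the complete intersection $R_\fp$ (legitimate because $\Tor^{R_\fp}_i(M_\fp,N_\fp) = 0$ for $i \ge 1$), one sees that $M_\fp$, $N_\fp$, and $(M \otimes_R N)_\fp$ are maximal Cohen-Macaulay (or zero); Proposition~\ref{prop:push-lift} then pins down $(M_1)_\fp$, and the displayed four-term sequence lets one estimate the depth of $(M_1 \otimes_R N)_\fp$. I expect the bookkeeping in codimension one---showing that the cokernel $M_1 \otimes_R N$ of the tensored pushforward picks up no torsion at height-one primes---to be the main obstacle; it is the technical core of the relevant arguments in \cite[\S1]{HJW}.

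\emph{Part (ii).} From the quasi-lifting construction~(\ref{pushforward}) one extracts exact sequences of $S$-modules
\[
0 \to S^{(\nu)} \to E \to M \to 0 \qquad \text{and} \qquad 0 \to S^{(\nu')} \to F \to N \to 0,
\]
where $S^{(\nu)} \hookrightarrow E$ is the inclusion of $fS^{(\nu)} = \ker(S^{(\nu)} \twoheadrightarrow R^{(\nu)})$ into $E = \ker(S^{(\nu)} \twoheadrightarrow M_1)$, so that the composite $S^{(\nu)} \to E \hookrightarrow S^{(\nu)}$ is multiplication by $f$; likewise for $F$. Tensor the first sequence with $F$ over $S$. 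Since $\Tor^S_i(E,F) = 0$ for $i \ge 1$, the higher Tor of $M$ against $F$ coincides with that of $E$ in degrees $\ge 2$, and in degree $1$ one uses that $f$ is a non-zerodivisor on $F$: the map $S^{(\nu)} \otimes_S F \to E \otimes_S F$ is injective because its composite with $E \otimes_S F \to S^{(\nu)} \otimes_S F$ is multiplication by $f$. Hence $\Tor^S_i(M,F) = 0$ for all $i \ge 1$. Tensoring $0 \to S^{(\nu')} \to F \to N \to 0$ with $M$, and separately with $M_1$ (using $0 \to E \to S^{(\nu)} \to M_1 \to 0$ to see that the higher Tor of $M_1$ against $F$ vanishes in degrees $\ge 2$), then gives $\Tor^S_i(M,N) = 0$ and $\Tor^S_i(M_1,N) = 0$ for all $i \ge 2$.

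Now feed these vanishings into the change-of-rings spectral sequence attached to $R = S/(f)$, with $f$ a non-zerodivisor, for a pair of $R$-modules; because $R$ has projective dimension one over $S$ it collapses to a long exact sequence linking $\Tor^S_\bullet$, $\Tor^R_\bullet$, and the degree $-2$ periodicity operator. Applied to $(M,N)$: from $\Tor^S_i(M,N) = 0$ for $i \ge 2$ one gets $\Tor^R_n(M,N) \cong \Tor^R_{n-2}(M,N)$ for all $n \ge 3$ together with an injection $\Tor^R_2(M,N) \hookrightarrow M \otimes_R N$. But $\Tor^R_2(M,N)$ is supported in codimension $\ge 2$, hence is torsion (a finitely generated module over a Cohen-Macaulay ring supported in codimension $\ge 2$ is killed by a non-zerodivisor), whereas $M \otimes_R N$ is torsion-free; so $\Tor^R_2(M,N) = 0$, and periodicity yields $\Tor^R_{2k}(M,N) = 0$ for all $k \ge 1$. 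Running the identical argument for $(M_1,N)$ produces an injection $\Tor^R_2(M_1,N) \hookrightarrow M_1 \otimes_R N$; here $\Tor^R_2(M_1,N) \cong \Tor^R_1(M,N)$ is again torsion and $M_1 \otimes_R N$ is torsion-free by part (i), so $\Tor^R_1(M,N) = 0$, whence $\Tor^R_{2k+1}(M,N) = 0$ for all $k \ge 0$. Combining the two parities gives $\Tor^R_i(M,N) = 0$ for all $i \ge 1$.
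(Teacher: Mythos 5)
First, note that the paper itself does not prove this proposition: it cites the proofs of \cite[Propositions 1.8 \& 2.4]{HJW} and \cite[Proposition 3.2(3)(b)]{Ce}, so your attempt is measured against those arguments. Your part (ii) is correct and is essentially that route: the presentations $0\to S^{(\nu)}\to E\to M\to 0$ and $0\to S^{(\nu')}\to F\to N\to 0$ with composite multiplication by $f$, the vanishing $\Tor^S_i(M,N)=\Tor^S_i(M_1,N)=0$ for $i\ge 2$, the change-of-rings long exact sequence giving periodicity and the injections $\Tor^R_2(M,N)\hookrightarrow M\otimes_RN$ and $\Tor^R_2(M_1,N)\cong\Tor^R_1(M,N)\hookrightarrow M_1\otimes_RN$, and the observation that these Tors are torsion while the targets are torsion-free. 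That all checks out, granted part (i).

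Part (i), however, is a genuine gap, and since (ii) leans on it the proof is incomplete. You only sketch a plan and explicitly defer the decisive step, and the plan as framed cannot work: you propose to rule out torsion by localizing at primes $\fp$ with $\height(\fp)\le 1$, but torsion-freeness of $M_1\otimes_RN$ requires excluding associated primes of \emph{every} height $\ge 1$, and torsion supported in codimension $\ge 2$ (e.g.\ a direct summand like $R/\fq$ with $\height(\fq)=2$) is invisible at height $\le 1$ localizations --- and at primes of height $\ge 2$ you have no Tor-vanishing hypothesis at all. Even at height-one primes, a depth-lemma estimate from the four-term sequence only yields depth $\ge 0$, not $\ge 1$. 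The missing ingredients are: (a) show $\Tor^R_1(M_1,N)$ is torsion --- since $\Tor^R_i(M_1,N)\cong\Tor^R_{i-1}(M,N)$ is torsion for $i\ge 2$, this follows from Corollary~\ref{cor:tortor} (equivalently, Theorem~\ref{prop:jorgensen} applied over the localizations at $\Ass R$) --- and conclude $\Tor^R_1(M_1,N)=0$ because it embeds in the torsion-free module $M\otimes_RN$; (b) this converts the four-term sequence into $0\to M\otimes_RN\to N^{(\nu)}\to M_1\otimes_RN\to 0$, and then reflexivity, i.e.\ $(S_2)$, of $M\otimes_RN$ together with $(S_1)$ for $N$ and the depth lemma gives $\depth_{R_\fq}(M_1\otimes_RN)_\fq\ge 1$ at every prime $\fq$ of height $\ge 2$; (c) at height-one primes, the hypothesis plus the same rigidity argument gives $\Tor^{R_\fp}_i((M_1)_\fp,N_\fp)=0$ for all $i\ge 1$, so the depth formula (\ref{dfor}) applied to the pair $((M_1)_\fp,N_\fp)$ (with $(M_1)_\fp$ maximal Cohen-Macaulay or zero by Proposition~\ref{prop:push-lift}(ii)) yields depth one there. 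Without steps (a)--(c) --- in particular the global vanishing of $\Tor^R_1(M_1,N)$, which is exactly the ``technical core'' you set aside --- the torsion-freeness in (i) is not established.
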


Serre's conditions $(S_{n})$ need not ascend along flat local homomorphisms.  This can be problematic:

\begin{example}
\label{ex:Emmy}
The ring $\mathbb C[[x,y,u,v]]/(x^2,xy)$ has depth two and therefore, by Heitmann's theorem \cite[Theorem 8]{He}, it is the completion $\widehat R$ of a unique factorization domain $(R,\fm)$.  Then $R$, being normal, satisfies $(S_2)$, but $\widehat R$ does not even satisfy $(S_1)$, since the localization at the height-one prime ideal $(x,y)$ has depth zero.
\end{example}

For flat local homomorphisms between Cohen-Macaulay rings, and more generally when the fibers are Cohen-Macaulay, however, $(S_n)$ \emph{does} ascend and descend:

\begin{lemma}
\label{lem:ascent}
Let $R$ be a local ring, $\fp$ a prime ideal of $R$, and $M$  a finitely generated $R$-module.
\begin{enumerate}[{\quad\rm(1)}]
\item If $M$ is reflexive, then so is the $R_\fp$-module $M_\fp$.
\item Suppose $R$ is Cohen-Macaulay.  Then
$(\tp R M)_{\fp} = \tp {R_\fp}{M_{\fp}}$; in particular, if $M$ is torsion-free, 
then so is $M_{\fp}$.
\item Suppose $R\to S$ is a flat local homomorphism.  If $S\otimes_RM$ satisfies $(S_n)$ as an $S$-module, then $M$ satisfies  $(S_n)$ as an $R$-module; the converse holds when the fibers of the map $R\to S$ are Cohen-Macaulay.
\end{enumerate}
\end{lemma}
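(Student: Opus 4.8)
The plan is to treat the three parts in order, since each uses progressively more.

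For part (1), I would argue directly from the definition of reflexivity, using that localization commutes with $\Hom$ for finitely presented modules. Since $R$ is noetherian and $M$ finitely generated, the canonical map $M_\fp \to (M^{**})_\fp$ is identified with the biduality map $M_\fp \to (M_\fp)^{**}$ over $R_\fp$ via the natural isomorphisms $(\Hom_R(M,R))_\fp \cong \Hom_{R_\fp}(M_\fp, R_\fp)$ applied twice. If $M \to M^{**}$ is an isomorphism, then so is its localization, hence $M_\fp$ is reflexive. This is routine and should take only a couple of lines.

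For part (2), I would invoke Remark \ref{lem:vb-torsionfree} essentially verbatim: over a Cohen-Macaulay ring, $\tp R M$ is killed by a non-zerodivisor so $(\tp R M)_\fp$ is torsion, and $\tf R M$ satisfies $(S_1)$ by (\ref{Serre}), so $(\tf R M)_\fp$ is torsion-free over $R_\fp$; localizing the exact sequence (\ref{eq:tp}) and using uniqueness of the torsion–torsion-free decomposition then forces $(\tp R M)_\fp \cong \tp {R_\fp}{M_\fp}$. The statement about torsion-free modules is the special case $\tp R M = 0$. Again, little new work is needed—this is just recording Remark \ref{lem:vb-torsionfree} as part of the lemma.

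Part (3) is the substantive one and is where I expect the real work. For any $\fq \in \Spec S$ lying over $\fp \in \Spec R$, the induced map $R_\fp \to S_\fq$ is flat local, and I would use the standard behavior of depth along flat local maps: $\depth_{S_\fq}((S\otimes_R M)_\fq) = \depth_{R_\fp}(M_\fp) + \depth_{S_\fq/\fp S_\fq}(S_\fq/\fp S_\fq)$, together with $\dim S_\fq = \dim R_\fp + \dim S_\fq/\fp S_\fq$. For the forward direction (descent of $(S_n)$ to $R$): given $\fp \in \Supp M$, choose a minimal prime $\fq$ of $\fp S$, so that $S_\fq/\fp S_\fq$ is artinian, hence has depth zero and dimension zero, giving $\depth_{S_\fq}((S\otimes_R M)_\fq) = \depth_{R_\fp}(M_\fp)$ and $\height \fq = \height \fp$; since $S\otimes_R M$ satisfies $(S_n)$ and $\fq \in \Supp(S\otimes_R M)$, the inequality $\depth_{R_\fp}(M_\fp) \ge \min\{n, \height \fp\}$ follows. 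For the converse, assume the fibers are Cohen-Macaulay; then for arbitrary $\fq$ over $\fp$, the fiber $S_\fq/\fp S_\fq$ is Cohen-Macaulay, so $\depth_{S_\fq/\fp S_\fq}(S_\fq/\fp S_\fq) = \dim S_\fq/\fp S_\fq$, and combining the two displayed formulas gives $\depth_{S_\fq}((S\otimes_R M)_\fq) - \min\{n,\height\fq\} \ge \bigl(\depth_{R_\fp} M_\fp - \min\{n,\height\fp\}\bigr) + \bigl(\dim S_\fq/\fp S_\fq - (\min\{n,\height\fq\} - \min\{n,\height\fp\})\bigr)$, and one checks the second parenthesized quantity is $\ge 0$ because $\min\{n,\height\fq\} - \min\{n,\height\fp\} \le \height\fq - \height\fp = \dim S_\fq/\fp S_\fq$. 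The main obstacle is bookkeeping with the $\min\{n, \height(-)\}$ truncations—making sure the fiber-dimension slack always covers the possible jump in the truncated height—but no deep input beyond the flat-local depth formula is required. Example \ref{ex:Emmy} shows the Cohen-Macaulay fiber hypothesis cannot be dropped from the converse.
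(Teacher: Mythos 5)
Your proposal is correct and follows essentially the same route as the paper: part (1) by localizing the biduality isomorphism, part (2) by citing Remark~\ref{lem:vb-torsionfree}, and part (3) via the flat-local depth and height formulas (as in \cite[Theorems 15.1 and 23.3]{Mat}), taking a minimal prime of $\fp S$ for descent and using Cohen--Macaulay fibers for ascent. The only difference is cosmetic: you spell out the $\min\{n,\height(-)\}$ bookkeeping that the paper leaves implicit in its final displayed inequality.
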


\begin{proof}
For part (1), localize the isomorphism $M\to M^{**}$.  Part  (2) is Remark~\ref{lem:vb-torsionfree}.  Part (3) can be proved along the same lines as \cite[Theorem 23.9]{Mat}: For any $\fq$ in $\Spec S$ with $\fp=\fq\cap R$, it follows from \cite[Theorem 15.1 and Theorem 23.3]{Mat} that
\begin{align*}
\height(\fq) &= \height(\fp) + \dim (S_{\fq}/\fp S_{\fq}) \quad\text{and}\\
\depth_{S_{\fq}}(S\otimes_{R}M)_{\fq} &= \depth_{R_{\fp}}(M_{\fp}) + \depth (S_{\fq}/\fp S_{\fq})\,.
\end{align*}
When $S\otimes_RM$ satisfies $(S_n)$, for $\fq$ minimal in $S/\fp S$ these equalities give
\[
\depth_{R_{\fp}}(M_{\fp}) = \depth_{S_{\fq}}(S\otimes_{R}M)_{\fq} \geq \min\{n,\height(\fq)\} = \min\{n,\height(\fp)\}.
\]
Thus $M$ satisfies $(S_{n})$. Conversely, if $S_{\fq}/\fp S_{\fq}$ is Cohen-Macaulay and the $R$-module $M$ satisfies $(S_{n})$, one gets
\[
\depth_{S_{\fq}}(S\otimes_{R}M)_{\fq} \geq \min\{n,\height(\fp)\} + \dim (S_{\fq}/\fp S_{\fq}) \geq \min\{n,\height(\fq)\}.
\]
This completes the proof of part (3).
\end{proof}

\section{Main theorem} \label{sec:eta}

Our main result, Theorem \ref{thm:eta2}, is here. We use the $\theta$ and $\eta$-pairings introduced by Hochster~\cite{Ho} and
Dao~\cite{Da2}. After  preliminaries on these, we focus on complete intersections; see (\ref{sec:ci}), the setting of our applications.  

\begin{chunk}{\bf The $\theta$ and $\eta$ pairings}  (Hochster \cite{Ho} and Dao \cite{Da1, Da2})\textbf{.}
\label{chunk:eta} 
Let $R$ be a local ring and let $M$ and $N$ be finitely generated $R$-modules. Assume that there exists an integer $f$ (depending on $M$ and $N$), such that $\Tor_i^R(M,N)$ has finite length for all $i \ge f$.

If $R$ is a hypersurface, then $\Tor^{R}_{i}(M,N)\cong \Tor^{R}_{i+2}(M, N)$ for all $i\gg 0$; see \cite{Ei}. Hochster \cite{Ho} introduced the $\theta$ pairing as follows:
\begin{equation*}
\theta_{}^R(M,N) = \length( \Tor^{R}_{2n}(M,N)) - \length(\Tor^{R}_{2n-1}(M, N)) \text{ for } n\gg 0\, .
\end{equation*}

When $R$ is any complete intersection, Dao \cite[Definition 4.2.]{Da2} defined:
\begin{equation*}
\eta_e^R(M,N) = \lim_{n\to\infty}
\frac{1}{n^e}\sum_{i=f}\limits^{n}(-1)^{i} \length(
\Tor^{R}_{i}(M,N))\, .
\end{equation*}
The $\eta$-pairing is a natural extension to complete intersections of
the $\theta$-pairing. Moreover the following statements hold; see
\cite[4.3]{Da2}.

\begin{enumerate}[{\rm(i)}]
\item $\eta_e^R(M,-)$ and $ \eta_e^R(-,N) $ are additive on short
  exact sequences, provided $ \eta_e^R $ is defined on the pairs of modules
involved.  
\item If $R$ is a hypersurface, then $
  \eta_1^R(M,N)= \frac{1}{2}\theta^R(M,N)$. Hence $\eta^R_1(M,N)=0$ if and only if
  $\theta^R(M,N)=0$.
\end{enumerate}
Assume $R$ is a complete intersection.
\begin{enumerate}[{\rm(i)}]\setcounter{enumi}{2}
\item $\eta_{e}^{R}(M,N)=0$ if $e\ge\codim R$ and either $M$ or $N$
  has finite length. 
\item $\eta_e^R$ is finite when $ e = \codim(R)  $, and $ \eta_e^R $
  is zero when $e > \codim R$. 
\end{enumerate}
\end{chunk}

The next result (Dao \cite[Theorem 6.3]{Da2}), on {\em Tor-rigidity},  
shows the utility of the $\eta$-pairing.

\begin{theorem}[Dao \cite{Da2}] 
  \label{prop:c-rigid} 
Let $R$ be a local ring whose completion is a complete intersection, of relative codimension $c\ge1$, in an \emph{unramified} regular local ring.  Let $M, N$ be finitely generated $R$-modules. Assume $\Tor^R_i(M,N)$ has finite length for all $i\gg0$, and that $\eta^R_c(M,N) = 0$. Then the pair
$M,N$ is \ $c$-Tor-rigid, that is, if $s\ge0$ and $\Tor_i^R(M,N)=0$ for all $i =s,\dots,s+c-1$, then $\Tor_i^R(M,N) = 0$ for all $i\ge s$.
\end{theorem}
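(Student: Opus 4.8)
The plan is to induct on the relative codimension $c$, peeling off one defining equation at a time and carrying the $\eta$-hypothesis along. First I would reduce to the case that $R$ is complete: since $\widehat R$ is faithfully flat over $R$ one has $\Tor^{\widehat R}_i(\widehat M,\widehat N)\cong\widehat{\Tor^R_i(M,N)}$, lengths of finite-length modules are preserved, so both the finite-length hypothesis and $\eta^R_c(M,N)=0$ pass to $\widehat R$, while $c$-Tor-rigidity over $\widehat R$ descends to $R$. Thus I may assume $R=Q/(f_1,\dots,f_c)$ with $(Q,\fn)$ a complete \emph{unramified} regular local ring and $f_1,\dots,f_c$ a $Q$-regular sequence; this is where the unramified hypothesis enters, and it will matter again, decisively, at the bottom of the induction.

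Next I would introduce the Eisenbud operators $\chi_1,\dots,\chi_c$ of homological degree $-2$ on $\Tor^R_\bullet(M,N)$. Since $Q$ is regular, $\pd_Q M<\infty$, so by Gulliksen's theorem $\bigoplus_{i\ge 0}\Tor^R_i(M,N)$ is finitely generated over $R[\chi_1,\dots,\chi_c]$; because the $\chi_j$ lower degree and $\Tor^R_i(M,N)$ has finite length for $i\gg0$, the module $\bigoplus_{i\ge f}\Tor^R_i(M,N)$ is generated over $R[\chi_1,\dots,\chi_c]$ by elements lying in finite-length components, so a common power of $\fn$ annihilates it and it is in fact finitely generated over $k[\chi_1,\dots,\chi_c]$ (a polynomial ring, after regrading). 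Hence $\length(\Tor^R_{2j}(M,N))$ and $\length(\Tor^R_{2j+1}(M,N))$ agree, for $j\gg0$, with polynomials $p(j)$ and $q(j)$ of degree $\le c-1$, and unwinding Dao's defining limit shows $\eta^R_c(M,N)=0$ exactly when $p$ and $q$ have the same coefficient in degree $c-1$. I would record the same facts for $S:=Q/(f_1,\dots,f_{c-1})$, a complete intersection of relative codimension $c-1$ in $Q$, together with the Gulliksen change-of-rings exact sequence through $\chi:=\chi_{f_c}$:
\[
\cdots\to\Tor^S_n(M,N)\to\Tor^R_n(M,N)\xra{\,\chi\,}\Tor^R_{n-2}(M,N)\to\Tor^S_{n-1}(M,N)\to\Tor^R_{n-1}(M,N)\to\cdots.
\]

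The induction starts at $c=1$: then $R$ is a hypersurface and $\eta^R_1(M,N)=\tfrac12\theta^R(M,N)=0$, so the assertion is Dao's hypersurface Tor-rigidity theorem, whose proof reduces to maximal Cohen--Macaulay syzygies, reads $\Tor$ off a matrix factorization, and combines $\theta=0$ with rigidity of $\Tor$ over the unramified regular ring $Q$. For $c\ge 2$, the displayed sequence realizes $\Tor^S_{n-1}(M,N)$, for $n\gg0$, as an extension of $\ker(\chi\colon\Tor^R_{n-1}\to\Tor^R_{n-3})$ by $\operatorname{coker}(\chi\colon\Tor^R_n\to\Tor^R_{n-2})$, so $\Tor^S_i(M,N)$ has finite length for $i\gg0$ and $\eta^S_{c-1}(M,N)$ is defined. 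Writing $\length(\Tor^S_{n-1}(M,N))$ in terms of $p$, $q$, and the lengths of the kernels of $\chi$ on $\Tor^R_\bullet(M,N)$---which, in high degrees, form a finitely generated graded $k[\chi_1,\dots,\chi_{c-1}]$-module and hence have Hilbert polynomial of degree $\le c-2$---one sees that the even and odd length polynomials of $\Tor^S_\bullet(M,N)$ differ only below degree $c-2$: the degree-$(c-2)$ contributions coming from the finite differences of $p$ and of $q$ cancel precisely because $p$ and $q$ have equal leading coefficients, that is, because $\eta^R_c(M,N)=0$. Thus $\eta^S_{c-1}(M,N)=0$, and the inductive hypothesis applies: the pair $M,N$ is $(c-1)$-Tor-rigid over $S$.

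It remains to transfer back to $R$. Suppose $\Tor^R_i(M,N)=0$ for $i=s,\dots,s+c-1$. For every $n$ with $n-2$ and $n-1$ both in this range---equivalently $n=s+2,\dots,s+c$---the displayed sequence forces $\Tor^S_{n-1}(M,N)=0$, so $\Tor^S_j(M,N)=0$ for the $c-1$ consecutive indices $j=s+1,\dots,s+c-1$; by $(c-1)$-Tor-rigidity over $S$ we get $\Tor^S_j(M,N)=0$ for all $j\ge s+1$. Feeding this back, the displayed sequence makes $\chi\colon\Tor^R_n(M,N)\to\Tor^R_{n-2}(M,N)$ an isomorphism for every $n\ge s+2$; since $\Tor^R_s(M,N)=\Tor^R_{s+1}(M,N)=0$, induction on $n$ yields $\Tor^R_n(M,N)=0$ for all $n\ge s$, as required. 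I expect the genuine obstacle to lie in the base case: Dao's hypersurface rigidity theorem is precisely where the unramified hypothesis does essential work, via Auslander--Lichtenbaum rigidity of $\Tor$ over the unramified regular ring $Q$; the secondary technical burden is the Hilbert-polynomial bookkeeping behind the implication $\eta^R_c(M,N)=0\Rightarrow\eta^S_{c-1}(M,N)=0$, which rests on finite generation of $\Tor^R_\bullet(M,N)$ over the ring of Eisenbud operators.
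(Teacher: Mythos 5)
The paper never proves this statement---it is imported wholesale from Dao \cite{Da2} (Theorem 6.3 there)---so there is no internal argument to measure you against; judged against Dao's original proof, your proposal is essentially a reconstruction of it: induction on the relative codimension, transfer of $\eta$-vanishing from $R=Q/(f_1,\dots,f_c)$ to $S=Q/(f_1,\dots,f_{c-1})$, and the hypersurface $\theta$-rigidity theorem of \cite{Da1} at the bottom. The parts you work out yourself are correct. From the Gulliksen sequence one gets $\length\Tor^S_{n-1}=\length\Tor^R_{n-2}-\length\Tor^R_n+\length\ker\chi_n+\length\ker\chi_{n-1}$, the kernel terms grow like a polynomial of degree at most $c-2$ (they form a finitely generated graded module over the operator ring on which $\chi_c$ acts trivially), and the remaining terms telescope to $\pm(\length\Tor^R_n-\length\Tor^R_{n+1})$ plus a constant, which is $O(n^{c-2})$ exactly when $\eta^R_c(M,N)=0$; this is in substance the identity $2e\cdot\eta^R_e=\eta^S_{e-1}$ of \cite[Theorem 4.3(3)]{Da2} that the present paper itself invokes in Lemma \ref{prop:eta}, so you could simply cite it. The rigidity transfer ($c$ consecutive vanishing Tors over $R$ give $c-1$ over $S$, then $\chi\colon\Tor^R_n\to\Tor^R_{n-2}$ is bijective for $n\ge s+2$, and $\Tor^R_s=\Tor^R_{s+1}=0$ starts the descent) is also right.

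Two caveats. The real weight of the theorem sits in your base case, which you only sketch in one line: Dao's hypersurface rigidity from \cite{Da1} is precisely where the unramified hypothesis enters, through Lichtenbaum's nonnegativity and vanishing of partial Euler characteristics over $Q$ \cite{Li}; a self-contained proof would have to reproduce that argument, not just gesture at matrix factorizations. Also, a small inaccuracy: after killing the tail of $\Tor^R_\bullet(M,N)$ by a power of $\fm$, it need not be a module over $k[\chi_1,\dots,\chi_c]$ in mixed characteristic, since $R/\fm^t$ is not then a $k$-algebra; instead view the tail as a finitely generated graded module over $(R/\fm^t)[\chi_1,\dots,\chi_c]$ and filter by a composition series of $R/\fm^t$ to obtain the same degree-$(c-1)$ polynomial bounds (this is \cite[Theorem 4.1]{Da2}).
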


The following conjectures have received quite a bit of attention:

\begin{conj} \label{conjD} %
  Assume $R$ is a local ring which is an
  isolated singularity, i.e., $R_{\fp}$ is a regular local ring for
  all non-maximal prime ideals $\fp$ of $R$.
\begin{enumerate}[\rm(i)]
\item (Dao \cite[Conjecture 3.15]{Da1}) %
  If $R$ is an equicharacteristic hypersurface of even dimension, then
  $\eta^{R}_{1}(M,N)=0$ for all finitely generated $R$-modules $M,N$.
\item (Moore, Piepmeyer, Spiroff and Walker \cite[Conjecture
  2.4]{MPSW2}) %
  If $R$ is a complete intersection of codimension $c\ge 2$, then
  $\eta^R_{c}(M,N)=0$ for all finitely generated $R$-modules $M,N$.
\end{enumerate}
\end{conj}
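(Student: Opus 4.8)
Conjecture~\ref{conjD} is a well-known open problem, so what follows is a line of attack rather than a finished argument. The plan is to treat both parts through a single mechanism: realize $\eta_c^R(-,-)$ as a nonzero rational multiple of a bilinear pairing on the Grothendieck group of the punctured spectrum, and then invoke Adams-operation and vanishing technology, which is available precisely because $R$ is assumed equicharacteristic. First I would record that the isolated-singularity hypothesis makes $\eta_c^R$ defined: for a non-maximal prime $\fp$ the ring $R_\fp$ is regular, hence of finite global dimension, so $\Tor^R_i(M,N)_\fp=\Tor^{R_\fp}_i(M_\fp,N_\fp)$ vanishes once $i>\dim R$; thus $\Tor^R_i(M,N)$ has finite length for all $i>\dim R$, and by (\ref{chunk:eta})(iv) the number $\eta_c^R(M,N)$ is finite. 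One may then replace $R$ by its completion, and next (harmlessly, since neither $\eta_c$ nor the hypotheses change) by a complete local ring with algebraically closed residue field.

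Next I would pass from $\Tor$ to $K$-theory. Writing $R=Q/(\underline f)$ with $Q$ regular and $\underline f$ a regular sequence of length $c$, the asymptotics of $\frac{1}{n^c}\sum_i(-1)^i\length\Tor^R_i(M,N)$ are controlled by the Eisenbud--Gulliksen operators, and by now-standard reductions --- Dao's $\eta$-to-$\theta$ comparison when $c=1$ \cite{Da2}, and the higher-codimension analogue of Moore--Piepmeyer--Spiroff--Walker \cite{MPSW2,Walker} --- one expects $\eta_c^R(M,N)$ to agree, up to a constant depending only on $c$, with a pairing $\langle[M],[N]\rangle$ that factors through classes in $G_0\big(\Spec R\setminus\{\fm\}\big)_{\QQ}$, or better through the Chern character into the Chow groups of the punctured spectrum. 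For $c=1$ this pairing is Hochster's $\theta$, already known to be bilinear and to be computable on the singularity category. The virtue of this step is that everything then takes place over a punctured regular scheme, where intersection theory and Adams operations are well behaved.

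Finally, the vanishing. For part (ii) the Adams operations $\psi^k$ on perfect complexes with finite-length homology act with the expected eigenvalues in the equicharacteristic case, and Roberts-type vanishing (Gillet--Soul\'e in characteristic zero, Roberts in general equicharacteristic) forces the class carrying $\eta_c^R(M,N)$ into the top Chow component of the punctured spectrum; when $c\ge2$ that component is too low-dimensional to support a nonzero value of the pairing, so $\eta_c^R(M,N)=0$. For part (i) the even-dimension hypothesis is a parity condition: for odd-dimensional hypersurface singularities $\theta$ need not vanish --- the odd-dimensional quadric cone, where the two spinor modules pair nontrivially, is the standard example --- whereas for even dimension the cycle class computing $\theta^R(M,N)$ should be numerically trivial. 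That last assertion is exactly the main obstacle: reducing $\eta_c$ to a pairing on rational $G_0$ of the punctured spectrum and constructing the Adams operations is robust, but deducing genuine vanishing of the pairing in the stated degrees is known only in special cases (graded rings, small codimension, or under extra positivity or resolution-of-singularities hypotheses). I would therefore expect this plan to yield a proof under such additional hypotheses, together with a clean reduction of the general conjecture to the vanishing of the $\eta$-class in $G_0\big(\Spec R\setminus\{\fm\}\big)_{\QQ}$.
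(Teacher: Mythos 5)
The statement you are addressing is a conjecture, not a theorem: the paper records it as Conjectures~\ref{conjD} precisely because no proof is known, and the paper itself offers no argument for it --- it only cites partial results (Moore--Piepmeyer--Spiroff--Walker for certain graded isolated hypersurface singularities, Polishchuk--Vaintrob, Buchweitz--van Straten, and Walker's Theorem~\ref{thm:Walker} in equicharacteristic zero with isolated singularities). So there is no ``paper proof'' to compare against, and your proposal, as you yourself acknowledge, is a research plan rather than a proof. Judged on its own terms it is a reasonable plan, and it is essentially the strategy behind the known partial results: interpret $\eta_c^R$ (or $\theta^R$ when $c=1$) as a pairing factoring through rational $K$- or Chow-theoretic invariants of the punctured spectrum or of the singularity category, then kill the relevant class using Adams operations, Chern characters, or Hochster--Roberts/Gillet--Soul\'e-type vanishing. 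Your preliminary observations are sound (the isolated-singularity hypothesis gives finite length of $\Tor_i^R(M,N)$ for $i\gg 0$, so $\eta_c^R$ is defined and finite by (\ref{chunk:eta})), and your remark that the even-dimension hypothesis in part (i) is a genuine parity condition is correct --- the node $k[\![x,y]\!]/(xy)$, used in Remark~\ref{rmks:Supp}(ii), already shows $\theta$ can be nonzero in odd dimension.

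The genuine gap is exactly the one you name: the final vanishing step --- showing that the class computing $\eta_c^R(M,N)$ (respectively the cycle class computing $\theta^R$ in even dimension) is actually zero, or numerically trivial, in the stated generality --- is the entire content of the conjecture, and no known technique carries it out beyond the graded, affine, or characteristic-zero settings where smoothness of the punctured locus over a field (or resolution of singularities and Hodge-theoretic input, as in \cite{MPSW,BS,PV,Walker}) is available. In the general complete local, possibly mixed-characteristic, possibly non-algebraizable situation, the reduction you describe to a pairing on $G_0$ of the punctured spectrum is plausible but the positivity/vanishing input is missing; nonnegativity questions for partial Euler characteristics in the ramified case are open even for the ingredients this paper does use (see the discussion after Theorem~\ref{Long's result}). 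So your proposal should be read as a correct framing of the problem and a reduction consistent with the literature, not as a proof; the paper treats the statement the same way, assuming the vanishing of $\eta$ as a hypothesis (Theorem~\ref{thm:eta2}) and importing its known cases (Proposition~\ref{prop:MPSW-local}, Theorem~\ref{thm:Walker}, Corollary~\ref{cor:Walker-local}) rather than proving the conjecture.
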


Moore, Piepmeyer, Spiroff and Walker~\cite{MPSW} have settled
Conjecture \ref{conjD}(i) in the affirmative for certain types of
affine algebras. Polishchuk and Vaintrob \cite[Remark 4.1.5]{PV}, as
well as Buchweitz and Van Straten \cite[Main Theorem]{BS}, have since
given other proofs, in somewhat different contexts, of this result;
see Theorem \ref{thm:Walker} for a recent result of Walker
\cite{Walker} concerning Conjecture \ref{conjD}(ii), and Corollary
\ref{cor:Walker-local} for an application of his result.

Our proofs of Lemma \ref{thm:eta} and Theorem
\ref{prop:second-rigidity} use the following  (see \cite[Lemma 3.1]{Au} or \cite[Lemma
  1.1]{HW1}).

\begin{remark} 
\label{Noproof} 
Let $R$ be a  local ring, and let $M$ and $N$ be nonzero finitely generated $R$-modules. Assume $M\otimes_{R}N$ is torsion-free. Then $M\otimes_{R}N \cong M \otimes \tf RN$. Moreover, if $\Tor_1^R(M,\tf RN) = 0$,  then $\tp R N=0$, and hence $N$ is torsion-free.
\end{remark}

We encounter the same hypotheses often enough to warrant a piece of notation.

\begin{notation}
\label{not:SP}
Let $c$ be a positive integer. A pair $M, N$ of finitely generated
modules over a ring $R$ satisfies $\SP c$ provided the following
conditions hold:
\begin{enumerate}[{\rm(i)}]
\item \label{item:M-N-Sc} %
$M$ and $N$ satisfy Serre's condition $(S_{c-1})$.
\item \label{item:MoN-Sc} %
$M\otimes_{R}N$ satisfies $(S_{c})$.
\item \label{item:tor-fin-len} %
$\Tor^R_i(M,N)$ has finite length for all $i\gg0$.
\end{enumerate}
\end{notation}

\subsection*{\em Hypersurfaces} 
We begin with a lemma analogous to \cite[Proposition 3.1]{Da3};
however, we do not assume any depth properties on $M$ or $N$; see (\ref{sec:ci})
and (\ref{not:SP}).

\begin{lemma} 
\label{thm:eta} 
Let $R$ be a local ring whose completion is a hypersurface in an \emph{unramified} regular local ring, and let $M, N$ be finitely generated $R$-modules. Assume the following hold:
\begin{enumerate}[\rm(i)]
\item $\dim(R)\ge 1$. 
\item The pair $M,N$ satisfies $\SP 1$. 
\item $\Supp_{R}(\tp RN)\subseteq \Supp_{R}(M)$.
\item $\theta^R(M,N)=0$.
\end{enumerate}
Then $\Tor^{R}_{i}(M,N)=0$ for all $i\ge 1$, and $N$ is torsion-free.
\end{lemma}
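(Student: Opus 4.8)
The plan is to prove the statement in three stages: (1) promote the hypotheses to the much stronger assertion that $\Tor^R_i(M,N)$ has finite length for \emph{every} $i\ge 1$ (not just $i\gg0$) and that $\tp RN$ has finite length; (2) reduce everything to the single vanishing $\Tor^R_1(M,\tf RN)=0$; (3) prove that vanishing, which is the crux. We may assume $M,N\ne0$, as everything is otherwise trivial, and we note that $R$ is Cohen--Macaulay, indeed Gorenstein, its completion being a hypersurface. For stage (1) I would induct on $\dim R$. Fix a prime $\fp\ne\fm$ and localize. Since a localization of an unramified regular local ring is again unramified, the completion of $R_\fp$ is a hypersurface in an unramified regular local ring of dimension $\dim R_\fp<\dim R$; because $R$ is Cohen--Macaulay, torsion submodules and $(S_1)$ localize (Remark~\ref{lem:vb-torsionfree}, Lemma~\ref{lem:ascent}), so $M_\fp\otimes_{R_\fp}N_\fp\cong(M\otimes_RN)_\fp$ is torsion-free and $\Supp_{R_\fp}(\tp{R_\fp}{N_\fp})\subseteq\Supp_{R_\fp}(M_\fp)$, while the finite-length condition (iii) of $\SP1$ forces $\Tor^R_i(M,N)_\fp=0$ for $i\gg0$, so $\theta^{R_\fp}(M_\fp,N_\fp)=0$ automatically. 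When $\dim R_\fp\ge1$, induction gives $\Tor^{R_\fp}_i(M_\fp,N_\fp)=0$ for all $i\ge1$ (and $N_\fp$ torsion-free); when $\dim R_\fp=0$, $R_\fp$ is a $0$-dimensional hypersurface, so $\Tor^{R_\fp}(M_\fp,N_\fp)$ is $2$-periodic in positive degrees and ``eventually zero'' upgrades to ``zero in every positive degree.'' Either way $\Tor^R_i(M,N)$ and $\tp RN$ are supported only at $\fm$, i.e.\ have finite length.

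For stage (2), put $\bar N=\tf RN$. By Remark~\ref{Noproof} the natural map $M\otimes_RN\to M\otimes_R\bar N$ is an isomorphism, so $M\otimes_R\bar N$ is torsion-free; by additivity of $\eta$ and the fact that $\eta^R_1(M,\tp RN)=0$ (property (iii) of (\ref{chunk:eta}), since $\tp RN$ now has finite length) one gets $\theta^R(M,\bar N)=2\eta^R_1(M,\bar N)=2\eta^R_1(M,N)=\theta^R(M,N)=0$; and the torsion sequence shows $\Tor^R_i(M,\bar N)$ has finite length for all $i\ge1$. It now suffices to prove $\Tor^R_1(M,\bar N)=0$: Remark~\ref{Noproof} then forces $\tp RN=0$, so $N=\bar N$, hence $\Tor^R_1(M,N)=0$, whence by Tor-rigidity (Theorem~\ref{prop:c-rigid} with $c=1$ and $\eta^R_1(M,N)=\tfrac12\theta^R(M,N)=0$) $\Tor^R_i(M,N)=0$ for all $i\ge1$; the depth formula then follows from (\ref{dfor}).

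For stage (3), write $\widehat R=Q/(f)$ with $Q$ unramified regular local and $f$ a nonzerodivisor, so $\dim Q=\dim R+1$. I would use the change-of-rings exact sequences attached to the Eisenbud operator $\chi\colon\Tor^R_j(M,\bar N)\to\Tor^R_{j-2}(M,\bar N)$, namely $0\to\operatorname{coker}\chi_{j+1}\to\Tor^Q_j(M,\bar N)\to\ker\chi_j\to0$. Since $\operatorname{im}\chi_2$ is a finite-length submodule of the torsion-free module $M\otimes_R\bar N=\Tor^R_0(M,\bar N)$ over a ring of dimension $\ge1$, it is zero, so $\chi_2=0$; feeding this back, and using $\Tor^Q_j(M,\bar N)=0$ for $j>\dim Q$, one checks that all $\Tor^Q_j(M,\bar N)$ with $j\ge2$ have finite length, so the partial Euler characteristic $\sum_{j\ge2}(-1)^{j}\length\Tor^Q_j(M,\bar N)$ is a well-defined finite quantity expressible through the lengths $\length\Tor^R_i(M,\bar N)$ and $\theta^R(M,\bar N)$. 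The hypothesis that $Q$ is unramified enters here, and essentially only here — through nonnegativity of partial Euler characteristics (equivalently, rigidity of $\Tor$) over $Q$ — and, combined with $\theta^R(M,\bar N)=0$ and the $2$-periodicity of $\Tor$ over the hypersurface $R$, it is meant to force $\length\Tor^R_1(M,\bar N)=0$, hence $\Tor^R_1(M,\bar N)=0$.

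I expect stage (3) to be the main obstacle: converting the purely numerical datum $\theta^R(M,\tf RN)=0$ into the honest module-theoretic vanishing $\Tor^R_1(M,\tf RN)=0$. Making the length bookkeeping close up requires a careful, possibly case-divided, use of the positivity input over the ambient unramified regular local ring $Q$; the delicate sub-case is when $\tf RN$ is \emph{not} maximal Cohen--Macaulay, so that its projective dimension over $Q$ exceeds $1$ and the Eisenbud operator $\chi$ is no longer injective in the low degrees, obstructing the naive periodicity argument that would otherwise finish immediately.
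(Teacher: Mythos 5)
Your stages (1) and (2) are sound (the induction/localization there is essentially how the paper itself disposes of the torsion submodule $\tp RN$), but the crux, stage (3), is missing, and the route you sketch cannot close as stated. Carry out your own bookkeeping: writing $t_i=\length\Tor^R_i(M,\tf RN)$ and using the exact sequences $0\to\operatorname{coker}\chi_{j+1}\to\Tor^Q_j(M,\tf RN)\to\ker\chi_j\to 0$, one gets $\length\Tor^Q_j = t_j+t_{j-1}-\length\operatorname{im}\chi_j-\length\operatorname{im}\chi_{j+1}$ for $j\ge 2$; summing with alternating signs, the terms telescope, and with your only module-theoretic input $\operatorname{im}\chi_2=0$, together with $\Tor^Q_j=0$ for $j>\dim Q$, eventual $2$-periodicity and $\theta^R(M,\tf RN)=0$, the partial Euler characteristic $\sum_{j\ge 2}(-1)^j\length\Tor^Q_j(M,\tf RN)$ comes out to be exactly $t_1$. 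Its nonnegativity is therefore automatic and yields nothing; you would need it to be \emph{zero}, which is precisely the statement you are trying to prove. The deeper point is that $\theta=0$ is only a rigidity datum: via Theorem \ref{prop:c-rigid} it propagates a vanishing you already possess, but it cannot manufacture a first vanishing (over $k[\![x,y]\!]/(xy)$ the pair $M=N=R/(x)\oplus R/(y)$ has $\theta=0$ yet $\Tor_i\ne 0$ for every $i\ge 1$). So the torsion-freeness of $M\otimes_R\tf RN$ must be exploited by more than the observation that $\operatorname{im}\chi_2=0$.

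This is exactly where the paper's proof differs: it never attacks $\Tor^R_1(M,\tf RN)$ head-on. Since $\tf RN$ is torsionless over the Gorenstein ring $R$ (see (\ref{TRM})), it embeds in a free module with cokernel $Z$; then $\Tor^R_1(M,Z)$ embeds into the torsion-free module $M\otimes_R\tf RN$ and is torsion by Corollary~\ref{cor:tortor}, hence vanishes --- that is the needed seed --- while $\theta^R(M,Z)=-\theta^R(M,\tf RN)=0$ by additivity. Dao's Theorem~\ref{prop:c-rigid} (which is where the positivity of partial Euler characteristics over the unramified ambient ring is already packaged) then gives $\Tor^R_i(M,Z)=0$ for all $i\ge 1$, and the free presentation transfers this back to $\Tor^R_i(M,\tf RN)=0$ for all $i\ge 1$. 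If you replace your stage (3) by this cokernel trick (applied over $R_\fp$, as the paper does, with the same caveat you already face about the unramified ambient hypothesis passing to localizations), your outline becomes a proof; as written, the key idea is absent and the Euler-characteristic computation is circular.
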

\begin{proof} 

Consider the following conditions for a prime ideal $\fp$ of $R$:
\begin{equation}\tag{\ref{thm:eta}.1}
(\tp RN)_\fp \text{ has finite length over } R_{\fp}, \text{ and } \dim(R_\fp) \ge 1.
\end{equation}

\noindent
{\sc Claim:} If $\fp$ is 
as in (\ref{thm:eta}.1), then $\Tor^{R_{\fp}}_{i}(M_{\fp}, (\tf
RN)_\fp)=0$ for all $i\geq 1$.  \smallskip

We may assume that
$M_{\fp}\ne 0$. We know from (ii) that
$\Tor^{R_{\fp}}_{i}(M_{\fp},N_{\fp})$ has finite length over $R_{\fp}$
for all $i\gg 0$. Since $(\tp RN)_\fp$ has finite length,
the exact sequence \eqref{eq:tp} for $N$, localized at $\fp$, shows that
$\Tor^{R_{\fp}}_{i}(M_{\fp},(\tf RN)_\fp)$ has finite length over
$R_{\fp}$ for all $i\gg 0$.

Using the additivity of $\theta^{R_\fp}$ along the same exact sequence, we see that
\begin{equation}
\label{eq:add}
\theta^{R_\fp}(M_\fp,(\tf RN)_\fp) = - \theta^{R_\fp}(M_\fp,(\tp RN)_\fp) =0\,,
\end{equation}
the last by (\ref{chunk:eta}).

Since $\tf RN$ is a torsionless $R$-module (see (\ref{TRM})), there exists an  exact sequence 
\begin{equation} 
\tag{\ref{thm:eta}.2}
0\to \tf RN \to R^{(n)}\to Z \to 0\,.
\end{equation}
Localizing this sequence at $\fp$, we see that, for $i\gg0$,  $\Tor^{R_{\fp}}_{i}(M_{\fp},Z_\fp)$ has finite length and hence (since $\dim(R_\fp) \ge 1$) is torsion. Now Corollary~\ref{cor:tortor} forces $\Tor^{R_{\fp}}_{i}(M_{\fp}, Z_\fp)$  to be torsion for all $i\geq 1$.

From (\ref{thm:eta}.2), we see that $\Tor^{R_{\fp}}_{1}(M_{\fp}, Z_\fp)$ embeds into $M_{\fp}\otimes_{R_{\fp}}(\tf RN)_\fp$.  But 
$\Tor^{R_{\fp}}_{1}(M_{\fp},Z_\fp)$ is torsion, and (by Remarks \ref{lem:vb-torsionfree} and \ref{Noproof})  $ M_{\fp}\otimes_{R_{\fp}}(\tf RN)_\fp$ is
torsion-free; therefore $\Tor^{R_{\fp}}_{1}(M_{\fp}, Z_\fp)=0$.

Next we note that $\theta^{R_\fp}(M_\fp,
Z_\fp)= - \theta^{R_\fp}(M_\fp,(\tf RN)_\fp) =0$; see (\ref{thm:eta}.2)
and \eqref{eq:add}.
This implies, by Theorem \ref{prop:c-rigid}, that
$\Tor^{R_{\fp}}_{i}(M_{\fp}, Z_\fp)=0$ for all $i\geq 1$; see
(\ref{chunk:eta}). The  claim now follows from (\ref{thm:eta}.2).

If $\tp RN \ne 0$, then there is a prime $\fp$ minimal in $\Supp_R(\tp
RN)$, and so $(\tp RN)_\fp$ is a nonzero  module of finite length. 
Moreover $\dim(R_\fp)\geq 1$: otherwise $\fp\in \Ass(R)$ and
hence $(\tp RN)_{\fp} = 0$; see (\ref{ts}). Thus $\fp$ satisfies
(\ref{thm:eta}.1) and, by our claim, $\Tor^{R_{\fp}}_{i}(M_{\fp}, (\tf
RN)_\fp)=0$ for $i\ge1$. The hypothesis (iii) on supports implies that $M_{\fp} \neq
0$, and now  Remark~\ref{Noproof} yields a
contradiction. We conclude that $\tp RN= 0$.

Applying the claim to the maximal ideal $\fp$ of $R$ yields the required vanishing.
\end{proof}

\begin{remark} 
\label{qu:eta2} 
\label{eq:node} 
\label{rmks:Supp} $\phantom{}$
\begin{enumerate}[\rm(i)]
\item The hypothesis (iii) of Lemma \ref{thm:eta} holds when, for
  example, the support of $N$ is contained in that of $M$. Moreover,
  if $R$ is a domain and $M$ and $N$ are nonzero, then, since $M\otimes_{R}N$
  is torsion-free, we see that
  $\Supp(M\otimes_RN)=\Spec(R)$, whence $\Supp(M) = \Spec(R)$.

\item Most of the hypotheses in Lemma \ref{thm:eta} are essential; see
  the discussion after \cite[Remark 1.5]{HW2}. Notice, without the
  assumption that $\dim(R)\ge 1$, the lemma would fail. Take, for
  example, $R=\CC[x]/(x^2)$ and $M = R/(x) = N$. The vanishing of
  $\theta$ is also essential: let $R = \CC[[x,y]]/(xy)$, $M = R/(x)$
  and $N = R/(x^2)$. Then the pair $M,N$ satisfies the conditions
  (ii) and (iii) of Lemma \ref{thm:eta}. On the other hand
  $\Tor^{R}_{2i+1}(M,N)\cong k$ for all $i\ge 0$, and
  $\Tor^{R}_{2i}(M,N)=0$ for all $i\geq 1$. (Thus $\theta^R(M,N) = -1$.)
\end{enumerate}
\end{remark}

The completion of any regular ring is a hypersurface in an unramified
regular local ring; see (\ref{subsec:ram}). Hence the following
consequence of Lemma \ref{thm:eta} extends Lichtenbaum's 
\cite[Corollary 3]{Li}, which in turn builds on Auslander's
\cite[Theorem~3.2]{Au}; cf. C. Miller's result recorded as
Corollary~\ref{cor:hyp-power} here.

\begin{proposition} \label{prop:powers} %
  Let $(R, \fm)$ be a $d$-dimensional local ring whose completion is a
  hypersurface in an unramified regular local ring, with $d\geq 1$,
  and let $M$ be a finitely generated $R$-module. Assume
  $\pd_{R_{\fp}}(M_{\fp})<\infty$ for all prime ideals $\fp\neq \fm$
  and that 
  $\theta^{R}(M,-)=0$. If $\tensor_R^nM$ is torsion-free for some
  integer $n\ge 2$, then $\pd(M) \leq (d - 1)/n$.  Consequently, if
  $M$ is not free, then $\tensor_R^nM$ has torsion for each $n\ge
  \max\{2,d\}$.
\end{proposition}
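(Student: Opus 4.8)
The plan is to reduce to vanishing of $\Tor^R_i(M,M^{\otimes j})$ by iterating Lemma~\ref{thm:eta}, then extract a lower bound for $\depth M$ from the depth formula~(\ref{dfor}), and finally apply the Auslander--Buchsbaum equality. Throughout write $M^{\otimes j}$ for $\tensor_R^{j}M$, with $M^{\otimes 0}=R$. If $M$ is free there is nothing to prove (since $d\ge1$ makes $(d-1)/n\ge0$), so I may assume $M$ is not free; then $M\neq0$, hence $M^{\otimes j}\neq0$ for all $j$ by Nakayama. Since $R$ is a hypersurface it is Cohen--Macaulay of dimension $d$ and has no embedded primes, so a finitely generated $R$-module satisfies $(S_1)$ exactly when it is torsion-free, and $\depth R=d$.

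First I would record that for every finitely generated $R$-module $L$, the module $\Tor^R_i(M,L)$ has finite length for $i\gg0$: indeed $\Tor^R_i(M,L)\iso\Tor^R_{i+2}(M,L)$ for $i\gg0$ because $R$ is a hypersurface~(\cite{Ei}), while $\Tor^R_i(M,L)_\fp=0$ for $i\gg0$ for each $\fp\neq\fm$ since $\pd_{R_\fp}M_\fp<\infty$; combining the two, $\Tor^R_i(M,L)_\fp=0$ for all large $i$ and all $\fp\neq\fm$, so $\Supp(\Tor^R_i(M,L))\subseteq\{\fm\}$ for $i\gg0$. Consequently each pair $M,M^{\otimes j}$ meets condition~\ref{item:tor-fin-len} of $\SP{1}$, and every $\theta^R(M,M^{\otimes j})$ is defined, hence $=0$ by hypothesis.

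Assuming $\tensor_R^{n}M$ is torsion-free with $n\ge2$, I would then show by descending induction on $j$ that $M^{\otimes j}$ is torsion-free for $1\le j\le n$ and $\Tor^R_i(M,M^{\otimes j})=0$ for all $i\ge1$ when $0\le j\le n-1$. The base case $j=n$ is the hypothesis, and $j=0$ is trivial. For the step, suppose $2\le j\le n$ and $M^{\otimes j}$ is torsion-free, and apply Lemma~\ref{thm:eta} to the pair $M,\,M^{\otimes(j-1)}$: one has $\dim R=d\ge1$; the pair satisfies $\SP{1}$ because $M\tensor_RM^{\otimes(j-1)}=M^{\otimes j}$ is torsion-free, hence satisfies $(S_1)$, and the Tor modules are eventually of finite length as above; $\Supp(\tp R{M^{\otimes(j-1)}})\subseteq\Supp(M^{\otimes(j-1)})=\Supp(M)$ since $j-1\ge1$; and $\theta^R(M,M^{\otimes(j-1)})=0$. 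The lemma returns $\Tor^R_i(M,M^{\otimes(j-1)})=0$ for $i\ge1$ and that $M^{\otimes(j-1)}$ is torsion-free, which closes the induction.

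To conclude, the depth formula~(\ref{dfor}) for the pairs $M,\,M^{\otimes j}$, $0\le j\le n-1$, gives $\depth M+\depth M^{\otimes j}=d+\depth M^{\otimes(j+1)}$, and telescoping yields $\depth M^{\otimes n}=n\depth M-(n-1)d$. As $M^{\otimes n}$ is a nonzero torsion-free module over the Cohen--Macaulay ring $R$ with $d\ge1$, we get $\fm\notin\Ass(M^{\otimes n})$, so $\depth M^{\otimes n}\ge1$ and hence $\depth M\ge d-\tfrac{d-1}{n}$. There remains the point that $\pd_RM<\infty$: taking $j=1$ (allowed since $n\ge2$) we have $\Tor^R_i(M,M)=0$ for all $i\ge1$, and over a hypersurface this forces $\pd_RM<\infty$ --- two modules of infinite projective dimension over a hypersurface are never Tor-independent. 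This implication is the one step that is not formal; it rests on the structure of $\Tor$ over complete intersections (cohomological support varieties, in the sense of Avramov and Buchweitz), and I expect it to be the main obstacle in writing the proof carefully. Granting it, Auslander--Buchsbaum gives $\pd_RM=d-\depth M\le\tfrac{d-1}{n}$. The final assertion is then immediate: if $M$ is not free and $n\ge\max\{2,d\}$, a torsion-free $\tensor_R^{n}M$ would force $\pd_RM\le\tfrac{d-1}{n}\le\tfrac{d-1}{d}<1$, so $\pd_RM=0$ and $M$ is free, a contradiction; therefore $\tensor_R^{n}M$ has torsion.
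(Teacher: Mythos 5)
Your argument is correct and follows essentially the same route as the paper: iterate Lemma~\ref{thm:eta} to get torsion-freeness of the tensor powers and vanishing of $\Tor^R_i(M,\tensor_R^{p-1}M)$, deduce $\pd(M)<\infty$ from Tor-independence of $M$ with itself over a hypersurface, and finish with the Auslander--Buchsbaum formula; the step you flag as the ``main obstacle'' is exactly the known result \cite[Theorem 1.9]{HW2}, recorded in the paper as Proposition~\ref{prop:HW2-fin-proj-dim}, so it is a citation rather than a gap. The only (harmless) deviation is that you telescope the depth formula to bound $\depth(M)$ directly, where the paper instead invokes \cite[Corollary 1.3]{Au} to get $\pd(\tensor_R^nM)=n\cdot\pd(M)$ and applies Auslander--Buchsbaum to $\tensor_R^nM$.
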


\begin{proof}
  We may assume $M\ne 0$.  
Iterating 
  Lemma~\ref{thm:eta} shows that $\tensor_R^pM$ is torsion-free for $p=1,\dots, n$, and that
  $\Tor^{R}_{i}(M,\tensor_R^{p-1}M)=0$ for all $i\geq 1$. Taking $p=2$, we
  see from \cite[Theorem 1.9]{HW2} that $\pd(M)<\infty$. Since
  $\depth({\tensor_R^nM})\geq 1$, one obtains, using \cite[Corollary 1.3]{Au} and the Auslander-Buchsbaum formula \cite[Theorem 3.7]{AuBu}, $n \cdot \pd(M) =
  \pd({\tensor_R^nM}) = d - \depth({\tensor_R^nM}) \leq d - 1$.
\end{proof}

\subsection*{\em Complete intersections}
Hypersurfaces in complete intersections give the inductive step for
our proof of Theorem \ref{thm:eta2}; see (\ref{pushforward}) on pushforwards.

\begin{lemma} \label{prop:eta} %
  Let $(S,\fn)$ be a complete intersection, and let $R$ be a
  hypersurface in $S$. Let $M$ and $N$ be finitely generated
  torsion-free $R$-modules, and let $E$ and $F$ be the quasi-liftings
  of $M$ and $N$, respectively, to $S$.  Assume $\Tor^{R}_i(M,N)$ has
  finite length for all $i \gg 0$.  
  Let $e$ be an integer with  $e\geq \max\{2,\codim(S)+1\}$.
   Then
\begin{enumerate}[\rm(i)]
\item $\Tor^S_i(E,F)$ has finite length for all $i\gg 0$, and 
\item $\eta_{e-1}^{S}(E,F) = 2\cdot e \cdot \eta_{e}^{R}(M,N)$. 
\end{enumerate}
\end{lemma}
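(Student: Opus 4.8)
The plan is to transfer the problem, via the defining exact sequences of the quasi-liftings, to a statement about the $R$-modules $M_1$ and $N_1$ (the pushforwards of $M$, $N$), and then settle that by a Hilbert-series computation built on the change-of-rings spectral sequence for $R = S/(f)$.

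Write $R = S/(f)$ with $f \in \fn$ a non-zerodivisor; then $R$ is Gorenstein, so $M_1$, $N_1$ and the quasi-liftings $E$, $F$ exist as in (\ref{pushforward}), and I may assume $M$ and $N$ are nonfree (otherwise a quasi-lifting is $S$-free and both sides of (ii) vanish). Applying $-\tensor_S F$ to the quasi-lifting sequence (\ref{eq:quasilifting}) for $M$, and then $M_1 \tensor_S -$ to the one for $N$, and using freeness of $S^{(\nu)}, S^{(\mu)}$, gives
\[
\Tor^S_i(E,F) \iso \Tor^S_{i+1}(M_1,F) \iso \Tor^S_{i+2}(M_1,N_1) \qquad (i \ge 1).
\]
Chasing the two pushforward sequences (\ref{eq:pushforward}) over $R$ likewise gives $\Tor^R_n(M_1,N_1) \iso \Tor^R_{n-2}(M,N)$ for $n \ge 3$, so $\Tor^R_i(M_1,N_1)$ has finite length for $i \gg 0$. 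I would then invoke the change-of-rings spectral sequence $\Tor^R_p(M_1, \Tor^S_q(R, N_1)) \Rightarrow \Tor^S_{p+q}(M_1,N_1)$: since $\Tor^S_q(R,N_1) = N_1$ for $q = 0,1$ and vanishes for $q \ge 2$, it has only these two rows, both $\Tor^R_\bullet(M_1,N_1)$, with $d_2$ the degree $-2$ cohomology (Eisenbud) operator $\chi = \chi_f$. Hence $\Tor^S_n(M_1,N_1)$, for $n \gg 0$, is an extension of a submodule of $\Tor^R_n(M_1,N_1)$ by a quotient of $\Tor^R_{n-1}(M_1,N_1)$, so it has finite length; with the display this proves (i). The same isomorphisms also show --- using $e \ge \codim S + 1$, so boundary terms are of lower order --- that the index shifts are invisible to the $\eta$-limits: $\eta^S_{e-1}(E,F) = \eta^S_{e-1}(M_1,N_1)$ and $\eta^R_e(M,N) = \eta^R_e(M_1,N_1)$. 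Thus (ii) reduces to $\eta^S_{e-1}(M_1,N_1) = 2e\cdot\eta^R_e(M_1,N_1)$.

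For this, set $d = \codim R$, so $d \in \{\codim S, \codim S+1\}$ and $\codim S \le e-1$, $d \le e$. By the theory of cohomology operators over the complete intersection $R$ \cite{Ei}, $\bigoplus_i \Tor^R_i(M_1,N_1)$ is a finitely generated graded module over the polynomial ring $k[\chi_1,\dots,\chi_d]$ on the operators (one being $\chi_f$); combined with the finite-length condition, the series $\Phi^R(u) = \sum_i \length\Tor^R_i(M_1,N_1)\,u^i$ equals, past some degree, $a(u)/(1-u^2)^d$ for a polynomial $a$, and similarly $\Phi^S(u) = b(u)/(1-u^2)^{\codim S}$. Reading off the leading pole at $u = 1$ of $\tfrac1{1-u}\Phi^R(-u)$ and of $\tfrac1{1-u}\Phi^S(-u)$ gives $\eta^R_d(M_1,N_1) = a(-1)/(2^{d}d!)$ and $\eta^S_{\codim S}(M_1,N_1) = b(-1)/(2^{\codim S}(\codim S)!)$. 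From the $E_\infty$-filtration of the spectral sequence one has, for $n \gg 0$,
\[
\length\Tor^S_n(M_1,N_1) = \length\Tor^R_n(M_1,N_1) + \length\Tor^R_{n-1}(M_1,N_1) - j_{n-1} - j_{n-2},
\]
where $j_m = \length \operatorname{im}\bigl(\chi_f \colon \Tor^R_{m+2}(M_1,N_1) \to \Tor^R_m(M_1,N_1)\bigr)$. Summing and passing to generating functions writes $\Phi^S$ in terms of $\Phi^R$ and of $K(u) = \sum_i \length\ker\bigl(\chi_f \colon \Tor^R_i(M_1,N_1)\to\Tor^R_{i-2}(M_1,N_1)\bigr)u^i$. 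The decisive point is that $\ker\chi_f$ is annihilated by $\chi_f$, hence finitely generated over the polynomial ring in the remaining $d-1$ operators, so $K(u) = \kappa(u)/(1-u^2)^{d-1}$; its contribution to the top-order growth is therefore negligible. Carrying the arithmetic through, using $d \in \{\codim S, \codim S+1\}$ and $e \ge \max\{2,\codim S+1\}$, one finds $b(-1) = a(-1)$ in the main case $d = e$, and that both sides vanish when $d < e$; either way $\eta^S_{e-1}(M_1,N_1) = 2e\cdot\eta^R_e(M_1,N_1)$, with the constant $2e$ produced exactly by the ratio of normalizations $2^{d}d!$ over $2^{d-1}(d-1)!$.

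I expect the last step --- showing that the $\chi_f$-torsion part of $\bigoplus_i \Tor^R_i(M_1,N_1)$ does not influence the limit defining $\eta$ --- to be the main obstacle; it rests on understanding the module structure over the cohomology operators well enough to see that this torsion submodule has strictly smaller dimension, hence strictly slower growth. The remaining ingredients --- the exact-sequence chases, the change-of-rings spectral sequence, and the final constant-tracking --- are routine, though the constants must be watched carefully.
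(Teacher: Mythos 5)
Your part (i) and your reduction of (ii) to the pair $M_1,N_1$ are the same as the paper's: the paper chases the pushforward and quasi-lifting sequences and uses additivity of $\eta$ (getting $\eta_e^R(M,N)=\eta_e^R(M_1,N_1)$ and $\eta_{e-1}^S(E,F)=\eta_{e-1}^S(M_1,N_1)$), which is equivalent to your degree-shift isomorphisms. Where you genuinely diverge is the last step: the paper simply cites Dao's change-of-rings theorem \cite[Theorems 4.1(3) and 4.3(3)]{Da2} for the identity $\eta_{e-1}^S(M_1,N_1)=2e\cdot\eta_e^R(M_1,N_1)$, whereas you propose to re-derive it from the long exact sequence of the pair $S\to R=S/(f)$ together with Eisenbud operators and Hilbert-series asymptotics. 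That route does work --- indeed it is essentially Dao's own argument --- and the step you flag as the ``main obstacle'' is settled by exactly the observation you make, once phrased correctly: $\bigoplus_i\Tor^R_i(M_1,N_1)$ is \emph{not} finitely generated over $k[\chi_1,\dots,\chi_d]$ (the operators lower degree by $2$), so you must pass to the Matlis dual of the eventually-finite-length tail (or quote Gulliksen's Noetherian result for Ext); the dual of $\ker\chi_f$ is then $(-)^\vee/\chi_f(-)^\vee$, finitely generated over a polynomial ring in $d-1$ variables, which gives the growth bound of degree at most $d-2$ regardless of whether $\chi_f$ lies in a minimal prime of the support. Two further points need to be made explicit for the argument to be complete: the connecting map of the change-of-rings sequence must be identified with one member $\chi_f$ of a \emph{full} family of operators attached to a presentation of $\widehat R$ over a regular ring extending one of $\widehat S$ (otherwise ``the remaining $d-1$ operators'' has no meaning), and the telescoping of the alternating sums together with the degenerate cases ($\codim R<e$, or $e>\codim S+1$, where both sides vanish by the complexity bound) must actually be carried out rather than asserted; when one does this the constant $2e$ falls out as you predict. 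In short: your proposal is a correct outline, but it amounts to reproving the quoted result of Dao; the paper's proof is shorter because it treats that identity as a black box, while yours is self-contained modulo Eisenbud's and Gulliksen's theorems at the cost of the asymptotic bookkeeping you left unfinished.
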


\begin{proof}
  By hypothesis, $R\cong S/(f)$, where $f$ is a
  non-zerodivisor in $S$. The spectral sequence associated to the
  change of rings $S\to R$ yields the following exact sequence, see
  \cite[pp. 223--224]{Li} or \cite[p. 561]{Mu}, for all $n\geq 1$:
\[
\cdots\to \Tor^R_{n-1}(M,N) \to \Tor_n^S(M,N) \to \Tor_n^R(M,N)\to
\cdots 
\]
Consequently $\Tor^{S}_{i}(M,N)$ has finite length for $i\gg 0$. Let
$M_1$ and $N_1$ be the pushforwards of $M$ and $N$, respectively.
Since $\Tor^{S}_{i}(R,-)=0$ for all $i\geq 2$, the sequences  
\eqref{eq:quasilifting} and \eqref{eq:pushforward} yield isomorphisms
\[
\Tor^{S}_{i}(E,N) \cong \Tor^{S}_{i+1}(M_1,N) \cong \Tor^{S}_{i}(M,N)
\; \text{for all $i\geq 2$}\, .  
\]
Arguing in the same vein, one gets isomorphisms
\[
\Tor^{S}_{i}(E,F)\cong \Tor^{S}_{i}(E,N) \; \text{for all $i\geq 2$}.
\]
Hence the length of $\Tor^{S}_{i}(E,F)$ is finite for all $i\gg 0$, and
so (i) holds.

Similar arguments show the $\eta$-pairing, over both $ R $ and $ S
$, as appropriate, is defined for all pairs $(X,Y)$ with
$X\in\{M,M_{1},E\}$ and $Y\in\{N, N_{1},F\}$.

By hypothesis, $\codim(S)\leq e-1$, and hence $\codim(R)\leq e$;
see
(\ref{sec:ci}). 
Additivity of $\eta$ along the exact sequences \eqref{eq:pushforward}
and \eqref{eq:quasilifting} thus gives 
\begin{equation*}
\begin{split}
\eta_{e}^{R}(M,N) & = - \eta_{e}^{R}(M_{1},N)= \eta_{e}^{R}(M_{1},N_1) 
\text{ and } \\
\eta_{e-1}^{S}(E,F) & = - \eta_{e-1}^{S}(M_{1},F) = \eta_{e-1}^{S}(M_{1},N_{1})\,.
\end{split}
\end{equation*}
Our assumption that $e\geq \max\{2,\codim S+1\}$, together with
\cite[Theorem 4.1(3)]{Da2}, allow us to invoke \cite[Theorem
4.3(3)]{Da2}, which says that
\[
2e \cdot \eta_e^R(M_{1},N_{1}) = \eta^S_{e-1}(M_{1},N_{1})\,.
\]
This gives (ii), completing the proof.
\end{proof}

The next theorem is our main result.  As its hypotheses 
are technical, several of its consequences are discussed in section
\ref{sec:van-eta}; see section \ref{sec:prelims} for background.   

\begin{theorem} 
\label{thm:eta2} 
Let $R$ be a local ring whose completion is a complete intersection in an \emph{unramified} regular local ring, of relative codimension $c\geq 1$. Let $M, N$ be finitely generated $R$-modules.  Assume the following hold:
\begin{enumerate}[\rm(i)]
\item $\dim(R)\ge c$.
\item The pair $(M,N)$ satisfies $\SP c$. 
\item $\Supp_R (\tp RN) \subseteq \Supp_R(M)$.
\item $\eta_{c}^{R}(M,N)=0$
\end{enumerate}
Then $\Tor^{R}_{i}(M,N)=0$ for all $i\ge 1$.
\end{theorem}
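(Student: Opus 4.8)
The plan is to reduce to the case that $R$ is complete and then induct on the relative codimension $c$. For the reduction: $R\to\widehat R$ is faithfully flat, and vanishing of $\Tor$ descends along it, so it suffices to prove the theorem for $\widehat R,\widehat M,\widehat N$. Since a complete intersection is Cohen--Macaulay, both $R$ and $\widehat R$ are Cohen--Macaulay; hence $\tf RN$ satisfies $(S_1)$ by Remark~\ref{lem:vb-torsionfree}, so $\widehat{\tf RN}$ satisfies $(S_1)$ over $\widehat R$ by Lemma~\ref{lem:ascent}(3) (ascent of Serre's conditions along maps with Cohen--Macaulay fibres) and is therefore torsion-free. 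Then $\tp{\widehat R}{\widehat N}\subseteq\widehat{\tp RN}$, so $\Supp_{\widehat R}(\tp{\widehat R}{\widehat N})$ lies in the preimage of $\Supp_R(\tp RN)$, hence in the preimage of $\Supp_R(M)$, which is $\Supp_{\widehat R}(\widehat M)$; this gives hypothesis (iii) over $\widehat R$. Hypotheses (i) and (iv) and the Serre conditions in $\SP c$ ascend likewise (the finite-length condition being clear), so we may assume $R$ is complete.

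If $c=1$, then $(S_{c-1})=(S_0)$ is vacuous, $(S_1)$ for $M\otimes_RN$ says that module is torsion-free, and $\eta_1^R(M,N)=\tfrac12\theta^R(M,N)$; thus the hypotheses are exactly those of Lemma~\ref{thm:eta}, which yields $\Tor^R_i(M,N)=0$ for all $i\ge1$. For the inductive step, let $c\ge2$ and write $R=Q/(\underline f)$ with $(Q,\fn)$ unramified regular local and $\underline f=f_1,\dots,f_c$ a $Q$-regular sequence in $\fn$. Set $S=Q/(f_1,\dots,f_{c-1})$: it is complete, hence its own completion, and it is a complete intersection of relative codimension $c-1\ge1$ in $Q$, with $R$ a hypersurface in $S$. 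Because $M,N$ satisfy $(S_1)$ over the Cohen--Macaulay ring $R$ they are torsion-free, so their quasi-liftings $E,F$ to $S$ are defined; see (\ref{pushforward}).

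The core of the argument is to check that $(S,E,F)$ satisfies the hypotheses of the theorem with $c$ replaced by $c-1$, apply the inductive hypothesis to get $\Tor^S_i(E,F)=0$ for all $i\ge1$, and descend this to $R$ via Proposition~\ref{prop:push-lift2}(ii). For the hypotheses over $S$: $\dim S=\dim R+1\ge c+1$; $E$ and $F$ embed in free $S$-modules, so are torsion-free and the support hypothesis is vacuous; $\Tor^S_i(E,F)$ has finite length for $i\gg0$, and $\eta_{c-1}^S(E,F)=2c\cdot\eta_c^R(M,N)=0$ by Lemma~\ref{prop:eta}, whose requirement $c\ge\max\{2,\codim S+1\}$ holds as $\codim S\le c-1$; and $(S_{c-2})$ for $E,F$ and $(S_{c-1})$ for $E\otimes_SF$ follow by chasing the pushforward and quasi-lifting sequences \eqref{eq:pushforward} and \eqref{eq:quasilifting}, using Proposition~\ref{prop:push-lift}(i), Proposition~\ref{prop:push-lift2}(i), and the equality of depth over $S$ and over $R$ for modules killed by $f_c$ --- the inputs here being $(S_{c-1})$ for $M,N$ and $(S_c)$ for $M\otimes_RN$, the latter module being moreover reflexive since it is $(S_2)$ over the Gorenstein ring $R$. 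Granting $\Tor^S_i(E,F)=0$ for $i\ge1$, Proposition~\ref{prop:push-lift2}(ii) then applies provided $N$ is torsion-free (it is), $M\otimes_RN$ is reflexive (it is), and $\Tor^R_i(M,N)_\fp=0$ for all $i\ge1$ at every prime $\fp$ with $\height\fp\le1$.

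I expect this last condition --- which also powers Proposition~\ref{prop:push-lift2}(i), invoked above --- to be the main obstacle. Since $\dim R\ge c\ge2$, any such $\fp$ is non-maximal, so $\Tor^R_i(M,N)_\fp=0$ for $i\gg0$; over the complete intersection $R_\fp$, of dimension at most one, the localized pair has torsion-free tensor product (as $(M\otimes_RN)_\fp$ is $(S_1)$), and $\theta^{R_\fp}(M_\fp,N_\fp)=0$ automatically because $\Tor$ over $R_\fp$ vanishes eventually --- so one finishes by a further quasi-lifting descent lowering $\codim R_\fp$ to one and then applying Lemma~\ref{thm:eta}. This verification, together with the bookkeeping that transports $\SP{c-1}$ onto the quasi-liftings while controlling depths (along the lines of \cite{HJW,HW1}), is the substantive part; everything else is formal manipulation of Lemma~\ref{prop:eta} and Proposition~\ref{prop:push-lift2}.
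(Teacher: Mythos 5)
Your outline follows the same skeleton as the paper's proof (induction on $c$ after completing, quasi-liftings $E,F$, Lemma~\ref{prop:eta} to transfer $\eta_c^R(M,N)=0$ to $\eta_{c-1}^S(E,F)=0$, the inductive hypothesis over $S$, and Proposition~\ref{prop:push-lift2}(ii) to descend), but the step you yourself flag as ``the main obstacle'' --- the vanishing $\Tor^R_i(M,N)_\fp=0$ for all $i\ge 1$ at low-height primes --- is left with a genuine gap, and the route you sketch for it does not close it. First, you only address primes of height at most $1$, whereas to verify condition (ii) of $\SP{c-1}$ for the quasi-liftings, i.e.\ that $E\otimes_SF$ satisfies $(S_{c-1})$, one needs (as in \eqref{eq:tor0}, fed into \cite[Theorem 4.8]{HJW}) the localized vanishing at \emph{every} prime of height at most $c-1$; ``chasing the pushforward and quasi-lifting sequences'' together with Proposition~\ref{prop:push-lift2}(i) only yields torsion-freeness ($(S_1)$-type information) for $M_1\otimes_RN$, not the $(S_{c-1})$ condition on $E\otimes_SF$ that the inductive hypothesis requires.

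Second, your proposed argument at primes $\fp$ of height at most $1$ is circular: to ``lower $\codim R_\fp$ to one by a further quasi-lifting descent'' you must invoke Proposition~\ref{prop:push-lift2}, whose hypothesis is the vanishing of $\Tor^{R_\fp}_i(M_\fp,N_\fp)$ at all primes of $R_\fp$ of height at most $1$; since $\dim R_\fp=\height(\fp)\le 1$, this includes the maximal ideal of $R_\fp$, i.e.\ exactly the vanishing being sought (and Lemma~\ref{thm:eta} is unavailable at height-zero primes, as it requires dimension at least $1$). The missing ingredient is the paper's Corollary~\ref{cor:ci-van}, a consequence of Jorgensen's theorem (Theorem~\ref{prop:jorgensen}): since $M$ satisfies $(S_{c-1})$ and $c\ge 2$, at any prime $\fp$ of height at most $c-1$ the module $M_\fp$ is zero or maximal Cohen--Macaulay over $R_\fp$, and because $\Tor^{R}_i(M,N)$ has finite length for $i\gg 0$ while $\fp$ is non-maximal (here $\dim R\ge c$ is used), the eventual vanishing of $\Tor^{R_\fp}_i(M_\fp,N_\fp)$ upgrades to vanishing for all $i\ge 1$; no $\theta$-, $\eta$-, or further descent argument is needed at this stage. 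Once \eqref{eq:tor0} is established this way, the remainder of your outline coincides with the paper's argument.
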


\begin{proof} 
The case $c=1$ is Lemma \ref{thm:eta}.  For $c\ge2$, proceed by induction on $c$. We can assume $R$ is complete, so that $R=Q/(\ul{f})$, where $Q$ is an unramified regular local ring and $\ul f=f_{1},\dots,f_{c}$ is a $Q$-regular sequence; see (\ref{subsec:ram})  and (\ref{lem:ascent}). Let $R=S/(f)$, where $S=Q/(f_1,\dots,f_{c-1})$ and $f=f_c$.

Hypothesis (ii) implies $\Tor^{R}_{i}(M,N)$ has finite length for all $i\gg 0$; see (\ref{not:SP}). Hence Corollary~\ref{cor:ci-van} implies that, for all primes $ \fp $ with $ \height(\fp) \leq c-1$, 
\begin{equation}
\label{eq:tor0}
\Tor^{R}_{i}(M,N)_{\fp} = 0 \text{ for all $i\ge 1$}.  
\end{equation}

Condition (ii) also implies $M$ and $N$ are torsion-free since $ c \geq 2 $; see (\ref{not:SP}). Hence quasi-liftings $E$ and $F$ of $M$
and $N$ to $S$ exist; see (\ref{pushforward}). Using the vanishing of Tors in \eqref{eq:tor0} and \cite[Theorem 4.8]{HJW}  (cf. \cite[Proposition 3.1(7)]{Ce}), one gets that
\begin{equation}
\label{eq:codim-red}
E\otimes_SF \text{ satisfies } (S_{c-1}) \text{ as an $S$-module.}
\end{equation}
It follows from \cite[Propositions 1.6 and 1.7]{HJW} (see also
\cite[Proposition 3.1(2) and 3.1(6)]{Ce}) that the assumptions in (i)
of $\SP c$ pass to $E$ and $F$; see (\ref{not:SP}).
\begin{equation}
\label{eq:Serre-free}
E \text{ and } F \text{ satisfy } (S_{c-1}) \text{ as $S$-modules}\,.
\end{equation}
Lemma \ref{prop:eta} guarantees that $\Tor^S_i(E,F)$ has finite length
for all $i\gg0$ and that $\eta_{c-1}(E,F) = 0$. In particular the pair $E,
F$ satisfies $\SP {c-1}$ over the ring $S$.  Moreover, $E$ and $F$,
being syzygies, are torsion-free, so we indeed have that $\Supp_S(\tp
SF)\subseteq \Supp_S(E)$. Now the inductive hypothesis implies that
\begin{equation}\label{eq:Tor-S-van}
\Tor^S_i(E,F) =0 \text{ for all } i\geq 1. 
\end{equation}
Condition (ii) also implies that $M\otimes_RN$ is reflexive since $ c \geq 2 $; see (\ref{Serre}). Further $\Tor^{R}_{i}(M,N)_{\fp} = 0$ for all $i\ge 1$ and for all $\fp\in \Spec(R)$ with $\height(\fp)\leq 1$; see \eqref{eq:tor0}. Thus Proposition~\ref{prop:push-lift2} and (\ref{eq:Tor-S-van}) yield
$\Tor^R_i(M,N)=0$ for all $i \ge1$.
\end{proof}

\begin{remark} In Theorem \ref{thm:eta2}, if $c\geq 2$, hypothesis (ii) implies that $N$ is torsion-free, i.e., $\tp RN=0$; see (\ref{Serre}) and (\ref{not:SP}). Thus, when $c\geq 2$, hypothesis (iii) of Theorem \ref{thm:eta2} is redundant.
\end{remark}

When $\dim(R) > c$, the equivalence of
(i) and (ii) in the following corollary seems interesting; see also (\ref{dfor}).  
Actually, in that case the equivalence of (ii) and (iii) holds without
the assumption that $\eta_c^R(M,N) = 0$.  See \cite[Corollary 2.4]{Ce}.

\begin{corollary} \label{cor:MCM} %
  Let $R$ be an isolated singularity whose completion is a complete
  intersection in an unramified regular local ring, of relative
  codimension $c$. Let $M$ and $N$ be maximal Cohen-Macaulay
  $R$-modules. Assume $\dim(R) \ge c$. Assume further that
  $\eta_c^R(M,N) = 0$.  The following conditions are equivalent:
\begin{enumerate}[\rm(i)]
\item $M\otimes_RN$ satisfies $(S_c)$.
\item $M\otimes_RN$ is maximal Cohen-Macaulay.
\item $\Tor_i^R(M,N)=0$ for all $i\geq 1$, and hence
the depth formula holds.  
\end{enumerate}
\end{corollary}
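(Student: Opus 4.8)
The plan is to establish the cycle of implications $(i)\Rightarrow(iii)\Rightarrow(ii)\Rightarrow(i)$. First I would dispose of the degenerate case: if $c=0$ then $R$ is regular, the modules $M$ and $N$ are free, so $M\otimes_RN$ is free, and all three conditions hold trivially. Henceforth we may assume $c\ge1$; then $R$ is Cohen--Macaulay with $\dim R\ge c\ge 1$, and $\depth M=\depth N=\dim R$.

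The substance is in $(i)\Rightarrow(iii)$, which I would obtain by applying Theorem~\ref{thm:eta2} to the pair $M,N$; the only task is to verify its four hypotheses. Conditions (i) and (iv) of that theorem, namely $\dim R\ge c$ and $\eta_c^R(M,N)=0$, are among the hypotheses of the corollary. For condition (ii) of the theorem, i.e.\ $\SP{c}$: any maximal Cohen--Macaulay module $X$ over a local ring satisfies $(S_n)$ for every $n$, because for $\fp\in\Supp X$ the standard inequality $\depth_R X\le\dim(R/\fp)+\depth_{R_\fp} X_\fp$ together with $\dim(R/\fp)+\height\fp\le\dim R$ yields $\depth_{R_\fp} X_\fp\ge\dim R-\dim(R/\fp)\ge\height\fp$; applied to $M$ and to $N$ this gives the $(S_{c-1})$ clause, the clause that $M\otimes_RN$ satisfy $(S_c)$ is exactly hypothesis (i) of the corollary, and the finite-length clause holds because $R$ is an isolated singularity---for each non-maximal prime $\fp$ the ring $R_\fp$ is regular, so $\pd_{R_\fp} M_\fp\le\dim R$ and hence $\Tor_i^R(M,N)_\fp\iso\Tor_i^{R_\fp}(M_\fp,N_\fp)=0$ once $i>\dim R$, forcing $\Supp_R \Tor_i^R(M,N)\subseteq\{\fm\}$ for all such $i$. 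For condition (iii) of the theorem: $N$, being maximal Cohen--Macaulay, satisfies $(S_1)$ and is therefore torsion-free (see (\ref{ts}) and (\ref{Serre})), so $\tp RN=0$ and the containment $\Supp_R(\tp RN)\subseteq\Supp_R(M)$ is vacuous. Theorem~\ref{thm:eta2} now gives $\Tor_i^R(M,N)=0$ for all $i\ge1$, and the depth formula follows from (\ref{dfor}).

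The remaining implications are formal. For $(iii)\Rightarrow(ii)$, feed $\Tor_i^R(M,N)=0$ for $i\ge 1$ into (\ref{dfor}): since $M$ and $N$ are nonzero we have $M\otimes_RN\ne0$ by Nakayama, and $\depth M+\depth N=\depth R+\depth(M\otimes_RN)$ becomes $\dim R+\dim R=\dim R+\depth(M\otimes_RN)$, so $M\otimes_RN$ is maximal Cohen--Macaulay. For $(ii)\Rightarrow(i)$, a maximal Cohen--Macaulay module satisfies $(S_n)$ for every $n$ by the inequality used above, so in particular $M\otimes_RN$ satisfies $(S_c)$. The one step warranting care is the finite-length clause of $\SP{c}$: one must draw from the isolated-singularity hypothesis a uniform homological degree beyond which every $\Tor_i^R(M,N)$ is supported at $\fm$ alone. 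Otherwise there is no genuine obstacle, since all the depth-theoretic weight is carried by Theorem~\ref{thm:eta2} and by the depth formula (\ref{dfor}).
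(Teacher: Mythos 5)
Your proposal is correct and takes essentially the intended route: the paper states this corollary without a written proof, and the implicit argument is exactly your cycle --- Theorem~\ref{thm:eta2} for (i)$\Rightarrow$(iii) (with $\SP c$ supplied by the maximal Cohen-Macaulay hypothesis plus the isolated-singularity condition, and the support condition vacuous since $N$ is torsion-free), the depth formula (\ref{dfor}) for (iii)$\Rightarrow$(ii), and the fact that maximal Cohen-Macaulay modules satisfy $(S_n)$ for all $n$ for (ii)$\Rightarrow$(i). Your separate treatment of $c=0$ and the uniform bound $i>\dim R$ for the finite-length clause are both correct and complete the verification.
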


Over a complete intersection, vanishing of Ext is closely related to vanishing of Tor:
$\Ext^{i}_{R}(M,N)=0$ for all $i\gg 0$ if and only if $\Tor^R_i(M,N)=0$ for all $i\gg 0$; see \cite[Remark
6.3]{AvBu}. Our next example shows the hypotheses of Theorem
\ref{thm:eta2} do {\em not} force the vanishing of $\Ext^{i}_{R}(M,N)$
for all $i\geq 1$.

\begin{example} \label{exExt} %
  Let $(R, \fm, k)$ be a complete intersection with $\codim(R)=2$ and
  $\dim(R) \geq 3$. Let $N$ be the $d$th syzygy of $k$, where
  $d=\dim(R)$, and let $M$ be the second syzygy of
  $R/(\underline{x})$, where $\underline{x}$ is a maximal $R$-regular
  sequence.

  Note that $N$ is maximal Cohen-Macaulay, $\depth(M)=2$ and $N_{\fp}$
  is free over $R_{\fp}$ for all primes $\fp\neq \fm$. It follows,
  since $\pd(M)<\infty$, that $\eta^{R}_{2}(M,N)=0$ and
  $\Tor^{R}_{i}(M,N)=0$ for all $i\geq 1$; see (\ref{chunk:eta}) and
  Theorem \ref{prop:jorgensen}. Therefore the depth formula
  (\ref{dfor}) shows that $\depth(M\otimes_{R}N)=2$. Since $M$ is a
  second syzygy, it satisfies $(S_2)$ and hence $M\otimes_{R}N$
  satisfies $(S_2)$; see (\ref{Serre}). In particular, the pair $M,N$
  satisfies $\SP 2$; see (\ref{not:SP}). However
  $\Ext_{R}^{d-2}(M,N) = \Ext^d(R/(\underline{x}),N) \neq 0$; 
  see, for example, \cite[Chapter 19,
  Lemma 1(iii)]{Mat}.
\end{example}

Here is the extension of Dao's theorem \cite[Theorem 7.7]{Da2} promised
in the introduction (cf. Theorem~\ref{Long's result}):

\begin{corollary}
\label{cor:dao} 
Let $R$ be a local ring that is a complete intersection, and let $M$ and $N$ be finitely generated $R$-modules. Assume that the following conditions hold for some integer $e\ge \codim(R)$:
\begin{enumerate}[\rm(i)]
\item $M$ and $N$ satisfy $(S_{e})$.
\item $M\otimes_{R}N$ satisfies $(S_{e+1})$.
\item $M_{\fp}$ is a free for all prime ideals $\fp$ of $R$ of height
  at most $e$. 
\end{enumerate}
Then $\Tor^{R}_{i}(M,N)=0$ for all $i\ge 1$ and hence the depth formula holds. 
\end{corollary}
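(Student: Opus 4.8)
The plan is to reduce Corollary~\ref{cor:dao} to Theorem~\ref{thm:eta2} by first passing to the completion and then arranging the ambient regular local ring to be unramified at the cost of raising the relative codimension by at most one. Since all the hypotheses and the conclusion are unaffected by completion --- Serre's conditions ascend and descend along $R\to\widehat R$ because the fiber is a field, hence Cohen--Macaulay, by Lemma~\ref{lem:ascent}(3); freeness of $M_\fp$ for $\height\fp\le e$ transfers because $\widehat R$ is faithfully flat over $R$ and the fibers of $R\to\widehat R$ are fields; and $\Tor$ commutes with completion --- I may assume $R$ is complete. Then by (\ref{subsec:ram}), $R = Q/(\underline f)$ for an \emph{unramified} regular local ring $Q$, with relative codimension $c$ satisfying $\codim R \le c \le \codim R + 1$.

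Next I would check that the hypotheses of Theorem~\ref{thm:eta2} hold with this $c$. We are given $e \ge \codim R$, so $e \ge c - 1$, i.e.\ $c \le e+1$; thus $c - 1 \le e$, and hypothesis (i) of Theorem~\ref{thm:eta2} (namely $M$, $N$ satisfy $(S_{c-1})$) follows from hypothesis (i) of the corollary, since $(S_e)$ implies $(S_{c-1})$. Similarly $(S_{e+1})$ implies $(S_c)$, giving condition (ii) of $\SP c$; condition (iii) of $\SP c$, finite length of $\Tor_i^R(M,N)$ for $i\gg 0$, follows from hypothesis (iii) of the corollary, because $M_\fp$ free for all $\fp$ of height $\le e \ge c-1 \ge 0$ forces $\Tor_i^R(M,N)_\fp = 0$ for all $i\ge 1$ at all non-maximal primes when $\dim R$ is handled appropriately --- more precisely, $M_\fp$ is free for every prime $\fp \ne \fm$ of height $\le e$, and if $\dim R \le e$ this covers all non-maximal primes, while if $\dim R > e$ one still needs an argument; I will return to this. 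For hypothesis (iii) of Theorem~\ref{thm:eta2}, freeness of $M_\fp$ for $\height\fp \le e$ and $e \ge c-1$, together with the fact that $\tp RN$ localizes to something supported only at primes where $N$ is not torsion-free, forces $\Supp(\tp RN)$ to consist of primes of height $> e \ge$ anything relevant, but the clean statement is: since $M_\fp$ is free (hence nonzero, as $M\ne 0$) for $\height\fp \le e$, and $\tp RN$ is supported in $\Supp M$ automatically once $M$ is locally free of positive rank there --- actually I would instead note $\tp RN$ has finite length support contained in $\{\fm\}$ when $\dim R \le e+1$, and $\fm \in \Supp M$. Finally, hypothesis (iv): $\eta_c^R(M,N) = 0$ because $M_\fp$ is free for all $\fp$ of height $\le e \ge c-1$, so in particular $M$ has finite projective dimension locally off a set of codimension $> c - 1 \ge$; more directly, freeness of $M$ at all primes of height $\le c$ (as $c \le e+1$, we at least get height $\le e$, and if $c = e+1$ we still need height exactly $e+1$ --- here is a genuine gap to watch) makes $\eta_c^R(M,N)$ a sum of local contributions that all vanish, by additivity and (\ref{chunk:eta})(iii).

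The two technical points I expect to be the real obstacles: first, matching the codimension bookkeeping when $c = \codim R + 1$, i.e.\ when completing to an unramified ambient ring genuinely costs a codimension --- one must verify that $e \ge \codim R$ still delivers $(S_{c-1}) = (S_e)$-or-weaker and the $\eta$-vanishing at level $c$ rather than $c-1$; the freeness hypothesis (iii) is stated for height $\le e = \codim R$, which is exactly $c-1$, so $\eta_c^R(M,N) = 0$ needs freeness slightly beyond what is assumed, and I would resolve this by invoking the additivity and the fact that $\eta_c^R$ only sees the behaviour of $\Tor$ in codimension $\le c$, where a pushforward/prime-avoidance argument (as in the proof of Theorem~\ref{thm:eta2} via \eqref{eq:tor0} and Corollary~\ref{cor:ci-van}) shows the relevant $\Tor$ already vanishes. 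Second, the case $\dim R > e$: here not every non-maximal prime is covered by hypothesis (iii), so I would handle it by the standard localization induction --- Theorem~\ref{thm:eta2} applied at each prime $\fp$ with $\dim R_\fp$ in the valid range, combined with the observation (noted in the paper right before Corollary~\ref{cor:MCM}, citing \cite[Corollary 2.4]{Ce}) that when $\dim R > \codim R$ the equivalence is even cleaner. Modulo these, the corollary is immediate: apply Theorem~\ref{thm:eta2} to $\widehat R$ with the relative codimension $c$, conclude $\Tor_i^{\widehat R}(\widehat M, \widehat N) = 0$ for all $i \ge 1$, descend to $R$ by faithful flatness, and invoke (\ref{dfor}) for the depth formula.
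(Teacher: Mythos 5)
There is a genuine gap, and it sits exactly where you flagged it: the vanishing of $\eta$. Your plan fixes $c$ to be the relative codimension coming from (\ref{subsec:ram}), i.e.\ $c\in\{\codim R,\codim R+1\}$, and then needs $\eta_{c}^{R}(M,N)=0$ to feed into Theorem~\ref{thm:eta2}. But the hypotheses of the corollary do not give this, and your proposed justification --- that freeness of $M_\fp$ in low codimension makes $\eta_c^R(M,N)$ ``a sum of local contributions that all vanish, by additivity and (\ref{chunk:eta})(iii)'' --- is not a valid argument: $\eta_c$ is computed from lengths of (finite-length, hence $\fm$-supported) Tor modules and does not localize or decompose into contributions at smaller primes. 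If it did, the vanishing of $\eta_c$ for modules that are locally free on the punctured spectrum would be automatic, whereas this is precisely the open Conjecture~\ref{conjD} (and, for $c=1$, Dao's $\theta$-conjecture). So with your choice of $c$ the key hypothesis (iv) of Theorem~\ref{thm:eta2} is out of reach; Dao's original Theorem~\ref{Long's result} gets around it only via nonnegativity of partial Euler characteristics, which is exactly what this corollary is designed to avoid.

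The paper's proof uses a different and essentially trivializing choice: it applies Theorem~\ref{thm:eta2} with $c=e+1$, \emph{not} with the minimal relative codimension. After disposing of $e=0$ (Auslander--Lichtenbaum rigidity) and of $\dim R\le e$ (where (iii) makes $M$ free), it inducts on $\dim R$: the hypotheses localize, so by induction $\Tor_i^R(M,N)$ has finite length for all $i\ge1$, hence the pair satisfies $\SP{e+1}$; since $\codim R\le e<e+1$, the pairing $\eta^R_{e+1}$ vanishes identically by (\ref{chunk:eta})(iv); and $\widehat R$ can be realized (by padding the presentation from (\ref{subsec:ram}) with extra power-series variables) as a complete intersection of relative codimension exactly $e+1$ in an unramified regular local ring. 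Then Theorem~\ref{thm:eta2} applies with $\dim R\ge e+1=c$, and the support hypothesis is vacuous since $c\ge2$. Your sketch contains the right ingredients for the finite-length step (localization induction) but not the codimension-inflation trick, and without it the argument cannot be completed.
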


\begin{proof}
  If $e = 0$ this is the theorem of Auslander \cite{Au} and 
  Lichtenbaum \cite[Corollary 2]{Li}.  
  Assume now that $e \ge 1$.  We use induction on $\dim R$.  If $\dim R \le e$, condition (iii) implies that
  $M$ is free, and there is nothing to prove.  Assuming $\dim R \ge e+1$, we note that the  hypotheses localize, so
  $\Tor^{R}_{i}(M,N)_{\fp}=0$ for each $i\ge 1$ and each prime ideal $\fp$ in the
  punctured spectrum of $R$; that is to say,
  $\Tor^{R}_{i}(M,N)$ has finite length for all $i\ge 1$. Thus the pair $M,N$
  satisfies $\SP{e+1}$. Moreover, since $\codim R < e+1$, we have $\eta^R_{e+1} = 0$ by
 item (iv) of  \eqref{chunk:eta}.  The
  completion of $R$ can be realized as a complete intersection, of
  relative codimension $e+1$, in an unramified regular local ring (see \ref{subsec:ram}). Hence
  the desired result follows from Theorem~\ref{thm:eta2}.
\end{proof}

\section{Vanishing of $\eta$} \label{sec:van-eta}

In this section we apply our results to situations where the $\eta$-pairing is known to vanish.  We know, from Theorem \ref{thm:eta2}, that, as long as the critical
hypothesis $\eta^{R}_{c}(M,N)=0$ holds, we can replace $c$ with $c-1$
in the hypotheses of Theorem~\ref{Long's result} and still conclude
the vanishing of Tor. Although it is not easy to verify vanishing
of $\eta$ (see Conjectures \ref{conjD}), there are several classes of
rings $R$ for which it is known that $\eta^{R}(M,N)=0$ for all finitely generated
$R$-modules $M$ and $N$. For example, if $R$ is an even-dimensional
simple (``ADE'') singularity in characteristic zero, then Dao
\cite[Corollary 3.16]{Da1} observed that $\theta^R(M,N) = 0$; see
\cite[Corollary 3.6]{Da1} and also \cite[\S3]{Da1} for more
examples.

Now we give a localized version of a vanishing theorem for graded rings, due to Moore, Piepmeyer, Spiroff, and Walker \cite{MPSW2}.

\begin{proposition}
\label{prop:MPSW-local}
Let $k$ be a perfect field and $Q=k[x_{1}, \dots, x_{n}]$ the polynomial ring with the standard grading. Let
$\ul{f}=f_{1},\dots, f_{c}$ be a $Q$-regular sequence of homogeneous polynomials, with $c\geq 2$.  Put $A=Q/(\ul{f})$ and   $R= A_\fm$, where $\fm = (x_1,\dots,x_n)$.  Assume that  $A_\fp$ is a regular local ring for each $\fp$ in $\Spec(A)\!\setminus\!\{\fm\}$.  Then $\eta_c^R(M,N) = 0$ for all finitely generated $R$-modules $M$ and $N$. In particular, if $n\ge 2c$ and the pair $M,N$ satisfies $\SP c$, then $M$ and $N$ are Tor-independent.
\end{proposition}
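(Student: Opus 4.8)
The plan is to deduce the vanishing $\eta_c^R(M,N)=0$ for all finitely generated $R$-modules from the graded vanishing theorem of \cite{MPSW2}, and then to obtain the ``in particular'' clause by feeding this into Theorem~\ref{thm:eta2}. I treat the second clause first, granting the first. Since $A=Q/(\underline f)$ and $\fm$ is the irrelevant maximal ideal, $\widehat R \cong k[[x_1,\dots,x_n]]/(\underline f)$ is a complete intersection of relative codimension $c$ in the equicharacteristic---hence unramified, see (\ref{subsec:ram})---regular local ring $k[[x_1,\dots,x_n]]$. Because $A_\fp$ is regular for every $\fp\in\Spec(A)\setminus\{\fm\}$, the ring $R=A_\fm$ is an isolated singularity, so $\Tor^R_i(X,Y)$ is supported only at the maximal ideal, and hence has finite length, for all $i>\dim R$; in particular $\eta_c^R$ is defined on every pair of finitely generated $R$-modules. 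Now $n\ge 2c$ says exactly $\dim R=n-c\ge c$, and since $c\ge 2$ the $(S_{c-1})$ condition on $N$ forces $N$ torsion-free, so hypothesis (iii) of Theorem~\ref{thm:eta2} is automatic; see the remark following that theorem. Hence, granting $\eta_c^R(M,N)=0$, Theorem~\ref{thm:eta2} applies and yields $\Tor^R_i(M,N)=0$ for all $i\ge 1$.

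For the main assertion, the essential input---which I would not reprove---is \cite{MPSW2}: under the present hypotheses on $A$, one has $\eta_c^R(X_\fm,Y_\fm)=0$ for all finitely generated \emph{graded} $A$-modules $X,Y$; this rests on the vanishing of suitable Adams operations and on intersection theory over the smooth projective variety $\operatorname{Proj}(A)$. To pass to arbitrary finitely generated $R$-modules I would argue on Grothendieck groups. Every finitely generated $R$-module is $X_\fm$ for some finitely generated $A$-module $X$, because finitely generated modules over a localization are localizations of finitely generated modules; moreover, when $\Tor^A_i(X,Y)$ is supported only at $\fm$---which happens for $i>\dim A$, by the isolated-singularity hypothesis---it has finite length and the localization map onto $\Tor^R_i(X_\fm,Y_\fm)$ is a length-preserving isomorphism, so the limits computing the two $\eta$-pairings coincide. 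Next, $\eta_c^R$ is additive on short exact sequences in each variable, see (\ref{chunk:eta}), hence descends to $G_0(R)$; it annihilates every class whose localization at $\fm$ has finite length, because $c\ge\codim R$, see (\ref{chunk:eta}); and, modulo such classes, $G_0(R)$ is generated by the images of classes $[X]$ of finitely generated graded $A$-modules $X$. Indeed $G_0(A)\to G_0(R)$ is surjective, the localization sequence for the punctured cone $U=\Spec(A)\setminus\{\fm\}$ identifies $G_0(A)/\ZZ[k]$ with $G_0(U)$, the pullback $G_0(\operatorname{Proj} A)\to G_0(U)$ is surjective (the map $U\to\operatorname{Proj}(A)$ being a $\mathbb{G}_m$-bundle), and every coherent sheaf on $\operatorname{Proj}(A)$ comes from a finitely generated graded $A$-module. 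Combining these reductions with the graded vanishing gives $\eta_c^R(M,N)=0$ for all finitely generated $R$-modules $M,N$.

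The step needing care---and the only real content here beyond the two cited results---is this passage from the graded category to arbitrary modules over $R=A_\fm$: it uses the additivity of $\eta$, its vanishing on finite-length modules, the length comparison, and the surjection $G_0(\operatorname{Proj} A)\twoheadrightarrow G_0(U)$, each of which in turn relies on the hypothesis that $A$ is regular away from $\fm$. If instead \cite{MPSW2} already provides the vanishing for all finitely generated $A$-modules, not merely graded ones, this step reduces to the single length comparison between the pairing over $A$ and the pairing over $R$. The remaining bookkeeping---verifying the hypotheses of Theorem~\ref{thm:eta2}---is routine.
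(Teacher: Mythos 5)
Your argument is correct, and its two structural pillars are the same as the paper's: (a) the observation that, since $A_\fp$ is regular for all $\fp\neq\fm$, the modules $\Tor^A_i(U,V)$ are supported at $\fm$ for $i\gg 0$, so localization at $\fm$ is a length-preserving isomorphism and the $\eta$-pairings over $A$ and over $R$ agree; and (b) the deduction of Tor-independence from Theorem~\ref{thm:eta2}, after noting that $\widehat R\cong k[[x_1,\dots,x_n]]/(\ul f)$ is a complete intersection of relative codimension $c$ in an unramified (equicharacteristic) regular local ring, that $n\ge 2c$ gives $\dim R\ge c$, and that hypothesis (iii) of that theorem is redundant for $c\ge 2$. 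Where you diverge is in the main assertion: you assume the input from \cite{MPSW2} is available only for \emph{graded} $A$-modules and therefore interpose a Grothendieck-group d\'evissage (bilinearity of $\eta_c^R$ on $G_0(R)$, its vanishing on finite-length classes since $c\ge\codim R$, surjectivity of $G_0(A)\to G_0(R)$, the localization sequence for the punctured cone $U$, the $\mathbb{G}_m$-torsor surjection $G_0(\operatorname{Proj}A)\twoheadrightarrow G_0(U)$, and the fact that coherent sheaves on $\operatorname{Proj}A$ come from graded modules). That reduction is sound in outline, but it is not needed: the result the paper invokes, \cite[Corollary 4.7]{MPSW2}, applies to arbitrary finitely generated $A$-modules once the morphism $\Spec(A)\!\setminus\!\{\fm\}\to\Spec(k)$ is smooth --- and this is exactly where the perfectness of $k$ enters, upgrading ``$A_\fp$ regular'' to ``$A_\fp$ smooth over $k$'' via \cite[Corollary 16.20]{EiBook}, a step your write-up leaves implicit in ``the present hypotheses.'' So the paper's proof is your hedged short path (length comparison plus direct citation), while your $G_0$ argument buys a fallback that would extend a purely graded vanishing theorem to all modules, at the cost of importing standard but nontrivial K-theoretic machinery.
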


\begin{proof}
Choose finitely generated $A$-modules $U$ and $V$ such that $U_\fm \cong M$ and $V_\fm \cong N$. For any maximal ideal $\fn\ne \fm$, the local ring $A_\fn$ is regular and  hence $\Tor^A_i(U,V)_\fn = 0$ for $i\gg 0$.  It follows that the map $\Tor^A_i(U,V) \to \Tor^R_i(M,N)$ induced by the
localization maps $U \to M$ and $V\to N$ is an isomorphism for $i\gg0$.  Also, for any $A$-module supported at $\fm$, its length as an $A$-module is equal to its length as an $R$-module. In conclusion, 
$\eta_c^R(M,N) = \eta_c^A(U,V)$.

As $k$ is perfect, the hypothesis on $A$ implies that the $k$-algebra $A_{\fp}$ is smooth for each non-maximal prime $\fp$ in $A$;  see \cite[Corollary 16.20]{EiBook}. Thus, the  morphism of schemes $\Spec(R)\!\setminus\!\{\fm\} \to \Spec(k)$ is smooth.   Now \cite[Corollary 4.7]{MPSW2} yields
$\eta_c^A(U,V) = 0$, and hence
$\eta_{c}^{R}(M,N)=0$. It remains to note that if $n\ge 2c$, then $\dim R\ge c$, so Theorem~\ref{thm:eta2} applies.
\end{proof}

Next, we quote a recent theorem due to Walker; it provides strong support for Conjectures~\ref{conjD}, at least in equicharacteristic zero.

\begin{theorem}\emph{(Walker~\cite[Theorem 1.2]{Walker})}
\label{thm:Walker} %
 Let $k$ be a field of characteristic zero, and let $Q$ a smooth
  $k$-algebra.  Let  $\ul{f}=f_1, \dots, f_c$ be a $Q$-regular
  sequence, with $c\geq 2$, and put $A = Q/(f_1, \dots, f_c)$.  Assume the singular locus
$\{\fp \in \Spec(A): A_{\fp} \text{ is not regular} \}$ is a finite set of
maximal ideals of $A$.
Then $\eta_c^A(U,V) = 0$ for all
finitely generated $A$-modules $U$, $V$.  
\end{theorem}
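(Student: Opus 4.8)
The plan is to follow Walker's argument \cite{Walker}, whose engine is a theory of Adams operations on the $K$-theory of perfect complexes supported on the singular locus; an alternative in the spirit of the earlier work of Moore, Piepmeyer, Spiroff and Walker \cite{MPSW,MPSW2} would route the computation through Hodge theory or intersection theory on $\operatorname{Sing}(A)$, but the Adams-operation approach is the one flexible enough to handle an arbitrary smooth $Q$ over a characteristic-zero field. The first step is to reformulate $\eta_c^A$ as a $K$-theoretic pairing. Since $\eta_c^A(-,-)$ is additive on short exact sequences in each argument (see (\ref{chunk:eta})) and is unchanged by modules whose support misses $\operatorname{Sing}(A)$ — over such primes $A$ is regular, so the relevant $\Tor$'s vanish in large degree — it descends to a $\mathbb{Q}$-bilinear pairing on $V:=K_0(\mathcal{D})\otimes\mathbb{Q}$, where $\mathcal{D}$ is the triangulated category of perfect $A$-complexes that are acyclic on $\Spec(A)\setminus\operatorname{Sing}(A)$ (equivalently, via Orlov, a pairing on the periodic singularity category of $A$). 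The hypothesis that $\operatorname{Sing}(A)$ is a finite set of closed points is exactly what makes such complexes ``of finite length'' outside a bounded range, so that the limit defining $\eta_c$ exists and computes this pairing; a dévissage over the points of $\operatorname{Sing}(A)$ then reduces us to $\operatorname{Sing}(A)=\{\fm\}$.

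The crux is to equip $V$ with Adams operations $\psi^j$ ($j\ge 1$) and compute their effect on $\eta_c^A$. Both standing hypotheses enter here: smoothness of $Q$ over $k$ gives $A$ a bounded finite free resolution, so the complexes in sight have controlled amplitude and the $\lambda$-ring/Adams structure on the Waldhausen $K$-theory of perfect complexes — extended to the supported setting — is available; and $\operatorname{char}k=0$ makes the eigenspace decomposition of $\psi^j$ well behaved (the Chern character being a rational isomorphism). The heart of the matter is a transformation law of the form
\[
\eta_c^A\bigl(\psi^j(x),\psi^j(y)\bigr)\;=\;j^{N}\,\eta_c^A(x,y)\qquad(j\ge1),
\]
where $N$ is an explicit integer depending on $\dim Q$ and on the codimension-$c$ normalization built into the definition of $\eta_c$ (from (\ref{chunk:eta})); tracking how $\psi^j$ interacts with that normalization — the periodicity operator attached to the length-$c$ regular sequence $\ul{f}$ — is the delicate part. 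Writing $V=\bigoplus_m V^{(m)}$ for the $\psi^j$-eigenspace decomposition ($\psi^j$ acting as $j^m$ on $V^{(m)}$), the displayed identity forces $\eta_c^A\bigl(V^{(m)},V^{(m')}\bigr)=0$ whenever $m+m'\ne N$.

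It remains to bound the weights and conclude. Because $Q$ is regular and $\operatorname{Sing}(A)$ is zero-dimensional, the supported $K$-group $V$ is concentrated in a band of roughly $c$ consecutive Adams weights (this is where $c=\operatorname{codim}$ intervenes, through the Koszul complex on $\ul{f}$; for $c=1$ this band is just the two weights reflecting the $\mathbb{Z}/2$-grading of matrix factorizations). Matching this band against the constraint $m+m'=N$ from the previous step, one checks that for $c\ge 2$ the band is too short to contain a pair of weights summing to $N$ — the pairing is forced off the diagonal, but there is no admissible off-diagonal pair — so $\eta_c^A$ vanishes identically, i.e. $\eta_c^A(U,V)=0$ for all finitely generated $U,V$. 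The hypothesis $c\ge 2$ is genuinely needed: for $c=1$ one has $\eta_1^A=\tfrac12\theta^A$, which is already nonzero for $R=\mathbb{C}[[x,y]]/(xy)$ (see Remark~\ref{rmks:Supp}).

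I expect the second step to be the main obstacle. Constructing Adams operations on the $K$-theory of perfect complexes supported on a possibly singular scheme, and establishing the precise transformation law for $\eta_c^A$, requires Walker's ``cyclic''/simplicial construction of $\psi^j$ together with a careful analysis of the periodicity operator determined by $\ul{f}$; the identification of $\eta_c$ with the $K$-theoretic pairing (first step) and the weight-band combinatorics (third step) are comparatively formal once that is in hand.
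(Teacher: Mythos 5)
You should first note what the paper actually does with this statement: it does not prove Theorem \ref{thm:Walker} at all. The theorem is quoted verbatim from Walker's preprint \cite{Walker} and then used (e.g.\ in Corollary \ref{cor:Walker-local}), so the only meaningful comparison is with Walker's own argument. There your sketch conflates two different arguments of Walker. The cited paper works with \emph{twisted matrix factorizations}: via the Orlov/Burke--Walker equivalence one passes from the singularity category of $A$ to matrix factorizations of the section of $\mathcal{O}(1)$ on $\mathbb{P}^{c-1}_Q$ determined by $f_1,\dots,f_c$, constructs a Chern character into a Hochschild/cyclic-type homology of that category (this is where smoothness of $Q$ and $\operatorname{char}k=0$ enter, via HKR), identifies $\eta_c$ up to a nonzero constant with a pairing of these characters, and then kills the pairing for $c\ge 2$ by a degree count on the graded pieces in which the two characters live --- the same mechanism as in \cite{MPSW2}. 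The Adams-operation machinery you make the engine of your argument is the method of Walker's separate work on the codimension-one $\theta$-pairing, not of \cite{Walker}.

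Judged as a proof, the proposal also has concrete gaps. First, the $K$-theoretic reformulation is off target: $\eta_c^A(U,V)=0$ whenever one argument has finite projective dimension (the relevant $\Tor$'s vanish in high degrees), so a pairing induced on $K_0$ of \emph{perfect} $A$-complexes acyclic off $\operatorname{Sing}(A)$ is identically zero for trivial reasons and says nothing about arbitrary finitely generated $U$, $V$; the pairing has to be set up on the Grothendieck group of the singularity category (equivalently, of the matrix factorization category), and making $\eta_c$ well defined and computable there is part of the work, not a formality. Second, the two assertions that carry the entire argument --- the transformation law $\eta_c^A(\psi^j x,\psi^j y)=j^{N}\eta_c^A(x,y)$ with an explicit $N$, and the claim that the relevant group is concentrated in a band of roughly $c$ Adams weights that for $c\ge 2$ cannot contain a pair summing to $N$ --- are stated, not proved, and you acknowledge that the first is the main obstacle. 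Those statements \emph{are} the theorem in this approach; without proofs of them (or a precise citation to where they are established for $\eta_c$, $c\ge2$, in the supported/twisted setting), the argument is circular. So what you have is a plausible research plan in the style of Walker and of \cite{MPSW2}, but not a proof of Theorem \ref{thm:Walker}.
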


\begin{corollary} \label{cor:Walker-local} %
  With $A$ as in \ref{thm:Walker},  put $R= A_\fm$ where
  $\fm$ is any maximal ideal of $A$.  Then 
$\eta_c^R(M,N) = 0$ for all finitely generated $R$-modules $M$ and $N$. 
In particular, if $\dim R \ge c$ and the pair $M,N$ satisfies $\SP c$, then $M$ and $N$ are Tor-independent.
 
\end{corollary}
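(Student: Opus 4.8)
The plan is to deduce this from Walker's theorem (Theorem~\ref{thm:Walker}), applied not to $A$ itself but to a localization of $A$ in which $\fm$ is the \emph{only} remaining singular closed point, and then to transport the vanishing of $\eta$ down to $R$. Write $\Sigma=\{\fm_1,\dots,\fm_r\}$ for the singular locus of $A$, a finite set of maximal ideals by hypothesis. By prime avoidance choose $g\in A$ with $g\notin\fm$ but $g\in\fm_j$ for every $j$ with $\fm_j\neq\fm$ (take $g=1$ if there is no such $\fm_j$), and set $B=A[1/g]$. Then $B=Q_g/(\ul f)$, where $Q_g$ is again a smooth $k$-algebra and $\ul f$ remains a $Q_g$-regular sequence of length $c\geq 2$ (since $B\neq0$); the singular locus of $B$ is $\Sigma\cap\Spec(B)$, which is $\{\fm B\}$ when $\fm\in\Sigma$ and $\emptyset$ otherwise, in either case a finite set of maximal ideals of $B$ (note that $g\notin\fm$ makes $\fm B$ maximal in $B$); and $B_{\fm B}=A_\fm=R$. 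Thus $B$ satisfies the hypotheses of Theorem~\ref{thm:Walker}, and $B$ is regular when $\Sigma\cap\Spec(B)=\emptyset$.

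Since $B$ is noetherian and $R=B_{\fm B}$, pick finitely generated $B$-modules $U,V$ with $U_{\fm B}\cong M$ and $V_{\fm B}\cong N$. Outside the singular locus of $B$ the localizations of $U$ have uniformly bounded projective dimension --- each locus $\{\fq\in\Spec(B):\pd_{B_\fq}U_\fq\le n\}$ is open, and the complement of the singular locus is a quasi-compact open subset of $\Spec(B)$ --- so there is an integer $N_0$ with $\Supp_B\Tor^B_i(U,V)\subseteq\{\fm B\}$ for all $i>N_0$. Hence for such $i$ the module $\Tor^B_i(U,V)$ has finite length, coincides with its localization $\Tor^B_i(U,V)_{\fm B}=\Tor^R_i(M,N)$, and has the same length over $B$ as over $R$. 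It follows that $\Tor^R_i(M,N)$ has finite length for $i\gg0$ and that $\eta^R_c(M,N)=\eta^B_c(U,V)$; the latter is $0$ by Theorem~\ref{thm:Walker} when $B$ is singular, and trivially (since then $\Tor^B_i(U,V)=0$ for $i\gg0$) when $B$ is regular. This proves $\eta^R_c(M,N)=0$, the first assertion.

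For the ``in particular'' assertion, assume $\dim R\geq c$ and that the pair $M,N$ satisfies $\SP c$. Because $c\geq2$, the first condition defining $\SP c$ forces $M$ and $N$ to satisfy $(S_1)$, hence to be torsion-free, so $\tp RN=0$ and the support hypothesis~(iii) of Theorem~\ref{thm:eta2} holds vacuously. Letting $\fm'\subset Q$ be the preimage of $\fm$, we have $R=Q_{\fm'}/(\ul f)$ with $Q_{\fm'}$ an equicharacteristic, hence unramified, regular local ring, so $\widehat R=\widehat{Q_{\fm'}}/(\ul f)$ exhibits $\widehat R$ as a complete intersection of relative codimension $c\geq1$ in an unramified regular local ring. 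Together with $\eta^R_c(M,N)=0$ this verifies all the hypotheses of Theorem~\ref{thm:eta2}, and that theorem yields $\Tor^R_i(M,N)=0$ for all $i\geq1$, i.e.\ $M$ and $N$ are Tor-independent. The step most in need of care is the passage to $B$: Walker's result is a statement about $A$, whose singular locus may meet several maximal ideals, so one cannot simply localize $\eta^A$ at $\fm$; the ring $B=A[1/g]$ is engineered to remove the extraneous singular points while keeping intact both the hypotheses of Theorem~\ref{thm:Walker} and the local ring $R=B_{\fm B}$, and the equality $\eta^R_c(M,N)=\eta^B_c(U,V)$ hinges on the uniform vanishing of $\Tor$ in high degrees away from the singular locus.
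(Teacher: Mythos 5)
Your proposal is correct and follows essentially the same route as the paper: invert an element to make $\fm$ the only (possibly) singular closed point while preserving the hypotheses of Theorem~\ref{thm:Walker} and the local ring $R$, identify $\eta_c^R(M,N)$ with $\eta_c$ of chosen global modules via the finite length of high-degree Tor modules, and then conclude Tor-independence from Theorem~\ref{thm:eta2} (noting, as you do, that hypothesis~(iii) there is automatic since $c\ge 2$ forces torsion-freeness). Your extra care about the uniform bound on projective dimension off the singular locus and the unramifiedness of the equicharacteristic ambient regular local ring just makes explicit what the paper leaves to the reader in the first paragraph of the proof of Proposition~\ref{prop:MPSW-local}.
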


\begin{proof} By inverting a suitable element of $Q$, we may assume that $A_\fp$ is a regular local ring
for every prime ideal $\fp\ne \fm$.  Now proceed as in the first paragraph of the proof of
Proposition~\ref{prop:MPSW-local}.
\end{proof}

\begin{theorem} \label{coreta0} %
  Let $(R,\fm,k)$ be a two-dimensional, equicharacteristic, normal,
  excellent complete intersection of codimension $c$, with
  $c\in\{1,2\}$, and let $M$ and $N$ be finitely generated
  $R$-modules. Assume $k$ is contained in the algebraic closure of a
  finite field. Assume further that $M,N$ satisfy the
  conditions (i) and (ii) of $\SP c$. Then $\Tor^{R}_{i}(M,N)=0$ for
  all $i\ge 1$.
\end{theorem}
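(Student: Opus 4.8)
The plan is to deduce Theorem~\ref{coreta0} from the main result, Theorem~\ref{thm:eta2}, by verifying its four hypotheses for the pair $M,N$ over $R$ with $c=\codim(R)\in\{1,2\}$. Since $R$ is equicharacteristic, its completion is a complete intersection in an \emph{equicharacteristic}, hence unramified, regular local ring, and a minimal Cohen presentation realizes $\widehat R$ as such a complete intersection of relative codimension exactly $c$; see (\ref{subsec:ram}) and (\ref{sec:ci}). Hypothesis~(i) of Theorem~\ref{thm:eta2}, that $\dim(R)\ge c$, is then immediate, as $\dim(R)=2$. We may assume $M\neq0$ and $N\neq0$, since otherwise $\Tor^R_i(M,N)=0$ trivially.

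Hypothesis~(ii) asks that $M,N$ satisfy $\SP c$; conditions (i) and (ii) of $\SP c$ are among the hypotheses of the theorem, so only finiteness of $\length\Tor^R_i(M,N)$ for $i\gg0$ needs justification. Here I would use that a two-dimensional normal local ring satisfies $(R_1)$ by Serre's criterion, so the singular locus of $R$ --- closed, as $R$ is excellent --- has codimension at least two and hence equals $\{\fm\}$; thus $R$ is an isolated singularity. Then every nonmaximal $R_\fp$ is regular of dimension at most one, so $\Tor^{R_\fp}_i(M_\fp,N_\fp)=0$ for $i\ge2$, whence $\Tor^R_i(M,N)$ is supported at $\fm$, and so of finite length, for all $i\ge2$. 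For hypothesis~(iii), $\Supp_R(\tp RN)\subseteq\Supp_R(M)$: when $c=2$ this is redundant, since condition~(i) of $\SP2$ forces $N$ to satisfy $(S_1)$, hence to be torsion-free (cf.\ the remark following Theorem~\ref{thm:eta2}); when $c=1$, recall that $R$ is a normal domain, while condition~(ii) of $\SP1$ makes $M\otimes_RN$ satisfy $(S_1)$, hence torsion-free, and, being nonzero by Nakayama's lemma and torsion-free over a domain, $M\otimes_RN$ has full support, so $\Supp_R(\tp RN)\subseteq\Spec(R)=\Supp_R(M)$.

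The remaining hypothesis~(iv), $\eta^R_c(M,N)=0$, is the crux, and I expect it to be the main obstacle. Since passing to the completion preserves the lengths of finite-length Tor modules, $\eta^R_c(M,N)=\eta^{\widehat R}_c(\widehat M,\widehat N)$, so it suffices to prove this vanishing over $\widehat R=k[[x_1,\dots,x_{c+2}]]/(\ul f)$, a two-dimensional isolated complete intersection singularity with $k$ algebraic over a finite field. This is precisely where the hypothesis on $k$ is used --- without it the vanishing is the content of the still-open Conjectures~\ref{conjD}. The idea is to invoke the known vanishing of the $\theta$-pairing (when $c=1$; recall $\eta_1=\tfrac12\theta$ from (\ref{chunk:eta})) and of the codimension-two $\eta$-pairing (when $c=2$) in this setting: the pairing is biadditive and factors, in each argument, through a Chow-group-type invariant modulo numerical equivalence, and over a field algebraic over a finite field that invariant is torsion in the relevant degrees --- for example, for such a surface singularity the divisor class group is torsion --- which forces the rational-valued pairing to vanish; see \cite{Da1} and \cite{MPSW2}. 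With (i)--(iv) in hand, Theorem~\ref{thm:eta2} delivers $\Tor^R_i(M,N)=0$ for all $i\ge1$.
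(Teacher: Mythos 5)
Your verification of hypotheses (i)--(iii) of Theorem \ref{thm:eta2} is correct and essentially matches the paper: normality plus $\dim R=2$ forces an isolated singularity, hence $\Tor^R_i(M,N)$ has finite length for $i\ge 2$, and the support condition is dispatched exactly as in Remark \ref{rmks:Supp}(i). The gap is hypothesis (iv), which you flag as the main obstacle and then only gesture at. Your reduction stops at $\widehat R$, whose residue field $k$ is merely \emph{contained in} the algebraic closure of a finite field, and at that level of generality the vanishing you want to quote is not an available "known result." The results you cite do not cover it: \cite{MPSW2} concerns graded/affine complete intersections whose punctured spectrum is smooth over the base field, not complete local rings over fields algebraic over $\mathbb F_p$, and the torsion-of-class-group (or torsion-of-Grothendieck-group) arguments in the literature are for residue field equal to the algebraic closure of a finite field. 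The paper's proof consists precisely of the work you omit: (a) pass from $\widehat R=S/(\ul f)$ to $\Lambda=\widehat{S}/(\ul f)$ via a gonflement $S\hra \overline S$ realizing $k\hra\overline k$ followed by completion, using excellence and \cite[Proposition 10.15]{LW} to check that $\Lambda$ is again a normal, isolated, codimension-$c$ complete intersection singularity, and Lemma \ref{lem:ascent} to ascend the $\SP c$ hypotheses along the flat local map $R\to\Lambda$; (b) invoke \cite[Proposition 2.5 and Remark 2.6]{CeD} over $\Lambda$ (where the residue field \emph{is} $\overline{\mathbb F}_p$-like) to get that $G(\Lambda)/L$ is torsion, whence $\eta^{\Lambda}_c=0$ by \cite[Corollary 3.1]{Da1}; (c) apply Theorem \ref{thm:eta2} over $\Lambda$ and conclude by faithfully flat descent. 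Without step (a), your claim that "for such a surface singularity the divisor class group is torsion" over a non-algebraically-closed $k$ is unproven, and without (b) you have not identified a correct mechanism for the $c=2$ case at all.

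Two smaller points: you assert without argument that $\widehat R$ is an isolated (normal) singularity; this uses excellence of $R$, since completion can destroy even $(S_1)$, as in Example \ref{ex:Emmy}. Also, the identification $\eta^R_c(M,N)=\eta^{\widehat R}_c(\widehat M,\widehat N)$ is fine, but it only shifts the problem to $\widehat R$; the substance of the theorem is the further passage to $\Lambda$ and back, which your proposal does not carry out.
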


\begin{proof} The completion $\widehat R$ is an isolated singularity
  because $R$ is excellent; see \cite[Proposition 10.9]{LW}, and
  so $\widehat R$ is a normal domain.  Replacing $R$ by
  $\widehat R$, we may assume that $R=S/(\ul f)$, where $(S,\fn,k)$ is
  a regular local ring and $\ul f$ is a regular sequence in $\fn^2$ of
  length $c$.  Let $\overline k$ be an algebraic closure of $k$, and
  choose a \emph{gonflement} $S\hra (\overline S,\overline \fn,
  \overline k)$ lifting the field extension $k\hra \overline k$; see
  \cite[Chapter 10, \S3]{LW}.  This is a flat local homomorphism and
  is an inductive limit of \'etale extensions.  Moreover, $\fn
  \overline S = \overline \fn$, so $\overline S$ is a regular local
  ring.  By \cite[Proposition 10.15]{LW}, both $\overline S$ and
  $\overline R := \overline S/(\ul f)$ are excellent, and $\overline
  R$ is an isolated singularity. Therefore $(\overline R, \overline
  \fm, \overline k)$ is a normal domain.  Finally, we pass to the
  completion $\widehat{S}$ of $\overline S$ and put $\Lambda = \widehat{S}/(\ul
  f)$.  This is still an isolated singularity, a normal domain, and a
  complete intersection of codimension $c$. Moreover, our hypotheses
  on $M$ and $N$ ascend along the flat local homomorphism $R\to
  \Lambda$; see (\ref{lem:ascent}).  Since $\Lambda$ is an isolated singularity,
  $\Tor_i^\Lambda(\Lambda\otimes_RM,\Lambda\otimes_RN)$ has finite length for
  $i\gg0$; thus the pair $\Lambda\otimes_RM$, $\Lambda\otimes_RN$ satisfies $\SP c$.

  It follows from \cite[Proposition 2.5 and Remark 2.6]{CeD} that
  $G(\Lambda)/L$ is torsion, where $G(\Lambda)$ is the Grothendieck
  group of $\Lambda$ and $L$ is the subgroup generated by classes of
  modules of finite projective dimension. This implies that
  $\eta_c^{\Lambda}(\Lambda\otimes_RM,\Lambda\otimes_RN) = 0$; see
 \cite[Corollary 3.1]{Da1} and the
  paragraph preceding it. Now Theorem \ref{thm:eta2} implies
  that $\Tor_i^{\Lambda}(\Lambda\otimes_RM,\Lambda\otimes_RN) = 0$ for
  all $i\ge 1$: the requirement on supports is automatically
  satisfied, since $\Lambda$ is a domain; see Remark
  \ref{rmks:Supp}(i). Faithfully flat descent completes the
  proof.
\end{proof}

\appendix
\section{An application of pushforwards} 

In Theorem \ref{thm:tor1} we use pushforwards to generalize a theorem due to Celikbas \cite[Theorem 3.16]{Ce}. We have two preparatory
results. The first one is a special case of a theorem of Jorgensen:

\begin{theorem} {\em (\cite[Theorem 2.1]{Jo1})}\label{prop:jorgensen} %
Let $R$ be a complete intersection and let $M$ and $N$ be finitely generated $R$-modules. Assume $M$ is maximal Cohen-Macaulay. If
$\Tor_i^R(M,N) = 0$ for all $i\gg0$, then $\Tor^{R}_{i}(M,N)=0$ for all $i\ge 1$.
\end{theorem}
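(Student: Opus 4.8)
The plan is to prove the statement by induction on the number of equations defining $R$, using the two-row change-of-rings spectral sequence of a hypersurface section to pass between $R$ and a complete intersection with one fewer equation. First I would reduce to the case that $R$ is complete: completion is faithfully flat and preserves depth and dimension, so after replacing $R$, $M$, $N$ by their $\fm$-adic completions the hypotheses and the conclusion are unchanged (in particular $M$ is still maximal Cohen--Macaulay). Thus write $R=Q/(\underline f)$ with $(Q,\fn)$ regular local and $\underline f=f_1,\dots,f_c$ a $Q$-regular sequence, and induct on $c$. When $c=0$ the ring $R$ is regular, so the maximal Cohen--Macaulay hypothesis together with the Auslander--Buchsbaum formula \cite{AuBu} forces $M$ to be free, whence $\Tor^R_i(M,N)=0$ for all $i\ge1$.

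Now let $c\ge1$ and assume the theorem for complete intersections defined by fewer equations. Set $S=Q/(f_1,\dots,f_{c-1})$, itself a complete intersection, and let $f$ be the image of $f_c$ in $S$, a non-zerodivisor, so that $R=S/(f)$. The crucial observation is that a first $S$-syzygy $\Omega$ of $M$ is maximal Cohen--Macaulay over $S$: since $\depth_S M=\depth_R M=\dim R=\dim S-1$ (using that $S$ is Cohen--Macaulay and $f$ a non-zerodivisor, so $\dim S/(f)=\dim S-1$), the depth inequality applied to $0\to\Omega\to S^b\to M\to0$ gives $\depth_S\Omega\ge\dim S$, and $\Omega\ne0$ because the nonzero module $M$ is annihilated by $f$. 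Next I would invoke the base-change spectral sequence $\Tor^R_p(M,\Tor^S_q(R,N))\Rightarrow\Tor^S_{p+q}(M,N)$: because $R=S/(f)$ with $f$ a non-zerodivisor and $N$ is an $R$-module, $\Tor^S_q(R,N)\cong N$ for $q=0,1$ and vanishes for $q\ge2$, so the spectral sequence is concentrated in two rows and unrolls into a long exact sequence
\[
\cdots\to\Tor^S_n(M,N)\to\Tor^R_n(M,N)\xrightarrow{\chi}\Tor^R_{n-2}(M,N)\to\Tor^S_{n-1}(M,N)\to\Tor^R_{n-1}(M,N)\xrightarrow{\chi}\Tor^R_{n-3}(M,N)\to\cdots
\]
with $\chi$ the Eisenbud operator; this is the change-of-rings sequence already used in the proof of Lemma~\ref{prop:eta}.

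From this sequence, $\Tor^R_i(M,N)=0$ for $i\gg0$ yields $\Tor^S_i(M,N)=0$ for $i\gg0$, hence $\Tor^S_i(\Omega,N)\cong\Tor^S_{i+1}(M,N)=0$ for $i\gg0$. Applying the inductive hypothesis to $S$ and the maximal Cohen--Macaulay module $\Omega$ gives $\Tor^S_i(\Omega,N)=0$ for all $i\ge1$, that is, $\Tor^S_j(M,N)=0$ for all $j\ge2$. Feeding this back into the long exact sequence, the operator $\chi\colon\Tor^R_n(M,N)\to\Tor^R_{n-2}(M,N)$ becomes an isomorphism for every $n\ge3$; since $\Tor^R_i(M,N)=0$ for $i\gg0$, every term of the two chains $\Tor^R_1\cong\Tor^R_3\cong\cdots$ and $\Tor^R_2\cong\Tor^R_4\cong\cdots$ therefore vanishes, so $\Tor^R_i(M,N)=0$ for all $i\ge1$, closing the induction.

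I do not anticipate a deep obstacle; the points requiring care are the reduction to the complete case (so that a presentation $R=S/(f)$ is available and the depth bookkeeping is legitimate), the verification that a first $S$-syzygy of $M$ is maximal Cohen--Macaulay over $S$ — this is precisely where the hypothesis on $M$ enters, and it relies on $f$ being part of a regular sequence so that the dimension drops by exactly one — and keeping track of which $\Tor^S$-terms flank each occurrence of $\chi$ in the two-row long exact sequence, in order to pin down the range $n\ge3$ on which $\chi$ is bijective. The argument above reproves the cited result of Jorgensen \cite{Jo1} directly, without appeal to complexity or support-variety machinery.
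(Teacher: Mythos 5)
Your argument is correct, but note that the paper does not prove this statement at all: it is quoted from Jorgensen \cite[Theorem 2.1]{Jo1}, whose original proof runs through complexity of the pair $(M,N)$ and in fact yields the sharper statement that $\sup\{i : \Tor_i^R(M,N)\neq 0\}$, when finite and positive, equals $\depth R-\depth M-\depth N$. Your route is genuinely different and more elementary: after completing, you induct on the codimension $c$ of a presentation $R=Q/(f_1,\dots,f_c)$, pass to $S=Q/(f_1,\dots,f_{c-1})$ and $R=S/(f)$, observe that a first $S$-syzygy $\Omega$ of the maximal Cohen--Macaulay module $M$ is maximal Cohen--Macaulay over $S$ (the depth count $\depth_S M=\dim S-1$ and $\Omega\neq 0$ because $fM=0$ with $f$ an $S$-non-zerodivisor are exactly right), and then exploit the two-row change-of-rings exact sequence
\[
\cdots\to\Tor^S_n(M,N)\to\Tor^R_n(M,N)\xrightarrow{\ \chi\ }\Tor^R_{n-2}(M,N)\to\Tor^S_{n-1}(M,N)\to\cdots
\]
which is the same sequence the paper invokes in the proof of Lemma~\ref{prop:eta} (citing \cite{Li,Mu}). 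Your bookkeeping is accurate: the eventual vanishing of $\Tor^R$ forces eventual vanishing of $\Tor^S$, the syzygy shift plus the inductive hypothesis give $\Tor^S_j(M,N)=0$ for $j\ge 2$, and then $\chi$ is injective for $n\ge 2$ and surjective for $n\ge 3$, so the two periodicity chains starting at $\Tor^R_1$ and $\Tor^R_2$ collapse to zero; the base case $c=0$ via Auslander--Buchsbaum is also fine. What your approach buys is a self-contained, complexity-free proof of precisely the special case the paper needs (and it stays within the paper's own toolkit); what it gives up is the finer quantitative information in Jorgensen's theorem, which identifies exactly where the last nonvanishing Tor can occur for modules that are not maximal Cohen--Macaulay.
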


\begin{corollary}
\label{cor:tortor} 
  Let $R$ be a  complete intersection and let $M, N$ be finitely
  generated $R$-modules. If $\Tor_i^R(M,N)$ is torsion for all $i\gg 0$, then
  $\Tor_i^R(M,N)$ is torsion for all $i\geq 1$.
\end{corollary}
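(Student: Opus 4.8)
The plan is to reduce the torsion statement to a statement about localizations at the minimal primes of $R$, where ``torsion'' has content, and then to apply Jorgensen's theorem (Theorem~\ref{prop:jorgensen}) one prime at a time. First I would recall that an $R$-module $T$ is torsion if and only if $T_\fp = 0$ for every $\fp \in \Ass(R)$; see (\ref{ts}). Since $R$ is a complete intersection, hence Cohen--Macaulay, $\Ass(R)$ consists precisely of the minimal primes of $R$. So it suffices to show: for each minimal prime $\fp$ of $R$, if $\Tor^R_i(M,N)_\fp = 0$ for all $i \gg 0$, then $\Tor^R_i(M,N)_\fp = 0$ for all $i \geq 1$. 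Note $\Tor^R_i(M,N)_\fp \cong \Tor^{R_\fp}_i(M_\fp, N_\fp)$.

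The key point is that $R_\fp$, being the localization of a complete intersection at a minimal prime, is an Artinian local ring, and in fact is itself a complete intersection (a localization of a complete intersection is a complete intersection). Over an Artinian local ring every finitely generated module is maximal Cohen--Macaulay — indeed $\dim R_\fp = 0$, so $\depth_{R_\fp}(M_\fp) = 0 = \dim R_\fp$ as soon as $M_\fp \neq 0$. Thus I would apply Theorem~\ref{prop:jorgensen} over the complete intersection $R_\fp$, with $M_\fp$ (which is maximal Cohen--Macaulay, or zero) in the role of the first module and $N_\fp$ in the role of the second: the hypothesis $\Tor^{R_\fp}_i(M_\fp,N_\fp) = 0$ for $i \gg 0$ then yields $\Tor^{R_\fp}_i(M_\fp,N_\fp) = 0$ for all $i \geq 1$. (If $M_\fp = 0$ the conclusion is trivial.) Running this over all minimal primes $\fp$ gives $\Tor^R_i(M,N)_\fp = 0$ for all such $\fp$ and all $i \geq 1$, which is exactly the assertion that $\Tor^R_i(M,N)$ is torsion for all $i \geq 1$.

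One bookkeeping point to check is that $R_\fp$ is local with $\fp R_\fp$ its maximal ideal and that Theorem~\ref{prop:jorgensen} applies to it; this is immediate since a localization of a complete intersection is a complete intersection and the statement of Theorem~\ref{prop:jorgensen} is for arbitrary complete intersections, with no extra hypotheses. I do not anticipate a serious obstacle: the only mild subtlety is making sure the reduction ``torsion $\iff$ vanishing at associated primes'' is correctly invoked in both directions, for the hypothesis and the conclusion, and that $\Ass R$ equals the set of minimal primes because $R$ is Cohen--Macaulay (so there are no embedded primes). Everything else is a direct application of Jorgensen's theorem after localizing.
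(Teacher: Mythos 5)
Your proposal is correct and follows essentially the same route as the paper: localize at the minimal (equivalently, associated) primes of $R$, note that $R_\fp$ is artinian so $M_\fp$ is maximal Cohen--Macaulay or zero, and apply Theorem~\ref{prop:jorgensen} to conclude the vanishing of $\Tor_i^{R_\fp}(M_\fp,N_\fp)$ for all $i\ge 1$.
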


\begin{proof} Let $\fp$ be a minimal prime ideal of $R$. By  (\ref{ts}), it suffices to prove
  that $\Tor_i^{R_\fp}(M_\fp,N_\fp) = 0$ for all $i\ge 1$.
For that we may assume $M_{\fp}\neq 0$. Then, since $R_{\fp}$ is artinian, it
follows that $M_\fp$ is a maximal Cohen-Macaulay
  $R_{\fp}$-module. Therefore Theorem \ref{prop:jorgensen} gives the
  desired vanishing.
\end{proof}

\begin{corollary}
\label{cor:ci-van}  
Let $R$ be a complete intersection, and let $M, N$ be finitely  generated $R$-modules.  Assume $M$ satisfies
  $(S_w)$, where $w$ is a positive integer, and that $\Tor^R_i(M,N)$ has finite length for
  all $i\gg0$. Let $\fp$ be a non-maximal prime
  ideal of $R$ such that $\height(\fp) \leq w$. Then $\Tor^{R}_{i}(M,N)_{\fp} = 0$ for all $i\ge 1$.
\end{corollary}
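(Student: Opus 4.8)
The plan is to reduce to the local ring $R_\fp$ and exploit the depth estimate supplied by Serre's condition $(S_w)$. First I would localize at $\fp$: writing $A = R_\fp$, $\overline M = M_\fp$, $\overline N = N_\fp$, the localization $A$ is again a complete intersection, and by hypothesis $\Tor^A_i(\overline M, \overline N)$ has finite length for all $i \gg 0$ — indeed it vanishes for $i\gg 0$, since $\Tor^R_i(M,N)$ is already a finite-length $R$-module for large $i$ and hence is supported only at $\fm$, so its localization at the non-maximal prime $\fp$ is zero. Thus $\Tor^A_i(\overline M, \overline N) = 0$ for all $i \gg 0$.

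Next I would bring in Theorem~\ref{prop:jorgensen}: over the complete intersection $A$, if $\overline M$ is maximal Cohen-Macaulay then the eventual vanishing of $\Tor^A_i(\overline M, \overline N)$ forces $\Tor^A_i(\overline M, \overline N) = 0$ for all $i \ge 1$, which is exactly what we want. So the whole statement comes down to verifying that $\overline M = M_\fp$ is a maximal Cohen-Macaulay $A$-module (we may of course assume $\overline M \ne 0$, i.e.\ $\fp \in \Supp M$, since otherwise all the Tor modules localize to zero trivially). This is where the hypotheses $\height(\fp) \le w$ and $(S_w)$ enter: by definition of $(S_w)$ we have
\[
\depth_{R_\fp}(M_\fp) \ge \min\{w, \height(\fp)\} = \height(\fp) = \dim(R_\fp)\,,
\]
the middle equality because $\height(\fp) \le w$, and the last because $R$, being a complete intersection, is catenary and equidimensional, so $\dim(R_\fp) = \height(\fp)$. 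Since depth never exceeds dimension, this gives $\depth_{A}(\overline M) = \dim A$, i.e.\ $\overline M$ is maximal Cohen-Macaulay over $A$.

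The main obstacle — really the only point that needs care — is the identification $\dim(R_\fp) = \height(\fp)$ that makes the $(S_w)$ inequality yield \emph{maximal} Cohen-Macaulayness rather than a weaker depth bound; this is where one uses that $R$ is a complete intersection (hence Cohen-Macaulay, hence all its localizations are Cohen-Macaulay and in particular $\dim R_\fp = \height \fp$). Everything else is bookkeeping: localizing the finite-length hypothesis to get eventual vanishing over $A$, and then quoting Theorem~\ref{prop:jorgensen}. No new machinery is required beyond what has already been set up.
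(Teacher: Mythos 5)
Your proposal is correct and follows the paper's own argument: localize at $\fp$ (where the finite-length Tor modules vanish since $\fp$ is non-maximal), note that $(S_w)$ together with $\height(\fp)\le w$ makes $M_\fp$ zero or maximal Cohen-Macaulay over $R_\fp$, and invoke Theorem~\ref{prop:jorgensen}. The only cosmetic remark is that $\dim(R_\fp)=\height(\fp)$ holds by the definition of height, so no appeal to catenarity or equidimensionality is needed there.
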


\begin{proof} 
Serre's condition $(S_{w})$ localizes, so  $M_{\fp}$ is either zero or a
 maximal Cohen-Macaulay $R_\fp$-module; see (\ref{Serre}). As
$\Tor_i^{R_\fp}(M_\fp, N_\fp) = 0$ for $i\gg 0$, Theorem
\ref{prop:jorgensen} implies that $\Tor_i^{R_\fp}(M_\fp, N_\fp) = 0$
for all $i\geq 1$.
\end{proof}

The next theorem generalizes a result due to Celikbas \cite[3.16]{Ce}; we emphasize that the ambient regular local ring in Theorem \ref{thm:tor1} is allowed to be ramified.

\begin{theorem}
\label{thm:tor1}
Let $R$ be a complete intersection  with $\dim R\ge \codim R$, and let
$M$ and $N$ be finitely generated $R$-modules. Assume the pair $M, N$ satisfies $\SP {c}$ for some $c\geq \codim R$. If $c=1$, assume further that $M$ or $N$ is torsion-free. If $\Tor^{R}_{1}(M,N)=0$, then $\Tor^{R}_{i}(M,N)=0$ for all $i\ge 1$.
\end{theorem}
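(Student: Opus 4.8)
The plan is to reduce to the main theorem, Theorem~\ref{thm:eta2}, by an induction that lowers the codimension via quasi-liftings, exactly as in the proof of Theorem~\ref{thm:eta2} itself, using the hypothesis $\Tor^R_1(M,N)=0$ to make the Tor-rigidity machinery bite. First I would dispose of the case $\codim R = 0$: then $R$ is regular, $\pd M < \infty$, and the vanishing of $\Tor_1$ forces $\pd M = 0$ or an Auslander-style rigidity argument (indeed $c\ge\codim R=1$ forces $M,N$ torsion-free, but more simply one invokes the classical rigidity of Tor over regular local rings) gives the conclusion; alternatively one notes this is subsumed by the case below once $c\ge 1$. So assume $c:=\codim R\ge 1$ and set $e\ge c$ as in the hypotheses. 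I would then complete $R$ (Serre's conditions and torsion-freeness are unaffected by the relevant faithfully flat base change; see~(\ref{lem:ascent})) and write $\widehat R = Q/(\underline f)$ with $Q$ a \emph{possibly ramified} regular local ring and $\underline f$ a regular sequence. The point of the theorem is precisely that we do \emph{not} assume $Q$ unramified, so the reduction must be engineered to feed Theorem~\ref{thm:eta2}, which \emph{does} require an unramified ambient ring, only after the ramification has been absorbed.

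The mechanism for that is~(\ref{subsec:ram}): every complete regular local ring is a hypersurface in an unramified one, so $\widehat R$ is a complete intersection of relative codimension $c' \in \{c, c+1\}$ in an unramified regular local ring. If $c' = c$ we are already in the unramified setting. If $c' = c+1$, I would peel off one hypersurface: write $\widehat R = S/(f)$ with $S$ a complete intersection of codimension $c-1$ sitting in the unramified ambient ring, let $E,F$ be the quasi-liftings of $M,N$ to $S$, and transport the hypotheses. Here is where $\Tor^R_1(M,N)=0$ enters: combined with $M\otimes_R N$ satisfying $(S_c)$ (hence reflexive, as $c\ge 1$ and actually one needs $c\ge 2$ for reflexivity, while for $c=1$ the extra torsion-freeness hypothesis on $M$ or $N$ is exactly what compensates) and with Corollary~\ref{cor:ci-van} giving $\Tor^R_i(M,N)_\fp = 0$ for $\height\fp\le c$, one upgrades via Jorgensen's theorem (Theorem~\ref{prop:jorgensen}, applied after localizing where $M$ becomes maximal Cohen-Macaulay) and the rigidity statements to get $\Tor^R_i(M,N)_\fp = 0$ for all $i\ge 1$ at the low-height primes. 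This is precisely the input that Proposition~\ref{prop:push-lift2} and \cite[Theorem 4.8]{HJW} need to conclude that $E\otimes_S F$ satisfies $(S_{c-1})$ and that $E,F$ satisfy $(S_{c-2})$, i.e. that the pair $E,F$ over $S$ satisfies $\SP{c-1}$, with $\Tor^S_i(E,F)$ of finite length for $i\gg0$ and finite-length hypotheses preserved by Lemma~\ref{prop:eta}(i).

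At that point $S$ has codimension $c-1$, the ambient ring is unramified, and $\dim S \ge c-1$; if moreover one can verify the $\eta$-hypothesis $\eta^S_{c-1}(E,F)=0$ — which holds because $\codim S = c-1 \le e-1 < e$, so by item~(iv) of~(\ref{chunk:eta}) the relevant $\eta$ vanishes, OR because Lemma~\ref{prop:eta}(ii) relates it to an $\eta^R_e(M,N)$ that itself vanishes for the same degree reason (as in the proof of Corollary~\ref{cor:dao}) — then Theorem~\ref{thm:eta2} applies to the pair $E,F$ over $S$ and yields $\Tor^S_i(E,F)=0$ for all $i\ge 1$. Finally Proposition~\ref{prop:push-lift2}(ii) pulls this back down: together with the established vanishing at height-$\le 1$ primes, it gives $\Tor^R_i(M,N)=0$ for all $i\ge 1$, and the depth formula follows from~(\ref{dfor}). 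The main obstacle I anticipate is bookkeeping around the $c=1$ case and the low-codimension base of the induction: when $c=1$ one cannot invoke reflexivity of $M\otimes_R N$, so the proof must use the supplementary torsion-freeness hypothesis together with Remark~\ref{Noproof} and Lemma~\ref{thm:eta} (the hypersurface case), checking carefully that $\Supp(\tp R N)\subseteq\Supp(M)$ or its substitute holds; and when $\codim R = 0$ one must make sure the statement is not vacuous. A secondary technical point is confirming that $\dim R \ge \codim R$ survives the passage to $S$ (it does, since $\dim S = \dim R + 1$ and $\codim S = c - 1$), so that the inductive application of Theorem~\ref{thm:eta2} is legitimate.
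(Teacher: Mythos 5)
Your proposal has a genuine gap at its core: you route everything through Theorem~\ref{thm:eta2}, but that theorem needs the hypothesis $\eta^{R}_{c'}(M,N)=0$ (for $c'$ the relative codimension in the unramified ambient ring), and nothing in Theorem~\ref{thm:tor1} supplies it. Since $\SP{c}$ implies $\SP{c'}$ for $c'\le c$, one may assume $c=\codim R$ from the outset, and this is the essential case; there the vanishing of $\eta$ is \emph{not} automatic. Your justification --- ``$\codim S=c-1\le e-1<e$, so by item (iv) of (\ref{chunk:eta}) the relevant $\eta$ vanishes,'' or alternatively that $\eta^R_e(M,N)$ vanishes ``for the same degree reason'' --- only applies when the index strictly exceeds the codimension, i.e.\ when $e>\codim R$; when $e=\codim R$ (equicharacteristic, or more generally whenever the completion sits with relative codimension exactly $\codim R$ in an unramified regular ring) neither $\eta^R_e(M,N)$ nor $\eta^S_{e-1}(E,F)$ is known to vanish, and the single hypothesis $\Tor^R_1(M,N)=0$ cannot produce such an asymptotic vanishing. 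The alternative escape you mention --- forcing relative codimension $c+1$ via ramification so that $\eta_{c+1}$ vanishes for degree reasons, as in Corollary~\ref{cor:dao} --- requires the pair to satisfy $\SP{c+1}$ and $\dim R\ge c+1$, which are exactly the stronger hypotheses of that corollary and are not available here. A secondary problem: your fallback for $c=1$ invokes Lemma~\ref{thm:eta}, which assumes $\theta^R(M,N)=0$, again not a hypothesis of Theorem~\ref{thm:tor1}.

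The paper's proof is structured quite differently precisely to avoid $\eta$ (and hence any unramifiedness assumption, which is the stated point of the theorem). One reduces to $c=\codim R$, then stays over $R$ and iterates \emph{pushforwards} of $M$ (not quasi-liftings to a lower-codimension ring), proving by induction that $M_n\otimes_RN$ satisfies $(S_{c-n})$ and that $\Tor^R_i(M_n,N)=0$ for $i=1,\dots,n+1$, using Corollary~\ref{cor:ci-van} and Corollary~\ref{cor:tortor} (Jorgensen's theorem), the depth lemma, and the depth formula. The single vanishing $\Tor^R_1(M,N)=0$ is thereby promoted to $c+1$ consecutive vanishing Tor modules for the pair $(M_{c-1},N_1)$, at which point Murthy's rigidity theorem over a codimension-$c$ complete intersection finishes the argument. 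If you want to repair your approach, you would have to replace the appeal to Theorem~\ref{thm:eta2} by a rigidity mechanism that is fed by $\Tor_1=0$ itself, which is exactly what the pushforward-plus-Murthy argument does.
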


\begin{proof} %
Without loss of generality, one may assume that $c=\codim R$. When $c=0$, the desired result is the rigidity theorem of Auslander \cite{Au} and Lichtenbaum \cite{Li}, so in the remainder of the proof we assume that $c\ge 1$. 

Assume first that $c=1$. By hypotheses $\Tor^{R}_{i}(M,N)$ has finite length for $i\gg 0$ and $M\otimes_{R}N$ is torsion-free; see (\ref{not:SP}).
Moreover, we may assume $N$ (say) is torsion-free. Tensoring $M$ with the pushforward   (\ref{pushforward}) for $N$ gives the following:
 \begin{gather} 
  \label{eq:tor11inj} %
   \Tor^{R}_{1}(M_{},N_{1}) \hookrightarrow M_{} \otimes_{R} N \\
\label{eq:tor11iso} %
   \Tor^{R}_{i}(M_{},N_{1})\cong \Tor^{R}_{i-1}(M_{},N) \text{
     for all $i\ge 2$.}
 \end{gather}
 Equation (\ref{eq:tor11iso}) implies that $\Tor_i^R(M_{},N_1)$ has finite
 length for all $i\gg 0$. Therefore, since $\dim(R) \geq 1$,
 $\Tor_i^R(M_{},N_1)$ is torsion for all $i\gg 0$; see (\ref{ts}). Now
Corollary~\ref{cor:tortor} implies that $\Tor^{R}_{i}(M_{},N_{1})$ is
 torsion for all $i\geq 1$. As $M\otimes_{R}N$ is torsion-free, we
 deduce from (\ref{eq:tor11inj}) that $\Tor^{R}_{1}(M,N_{1})=0$. By
 (\ref{eq:tor11iso}) we have $\Tor^{R}_{2}(M_{},N_{1})\cong
 \Tor^{R}_{1}(M_{},N)=0$. Therefore $\Tor^{R}_{2}(M,N_{1})=0=
 \Tor^{R}_{1}(M,N_{1})$, and hence Murthy's rigidity theorem
 \cite[Theorem 1.6]{Mu} implies that $\Tor^{R}_{i}(M_{},N_{1}) = 0$ for
 all $i\ge 1$.  Now (\ref{eq:tor11iso})
 completes the proof for the case  $c = 1$.

Assume now that $c\geq 2$. We define a sequence $M_{0},
M_{1},\dots,M_{c-1}$ of finitely generated modules  by setting $M_{0}=M$, and $M_{n}$ to be the
pushforward of $M_{n-1}$, for all $n=1,\dots,c-1$. These pushforwards
exist: $M_{0}$ satisfies $(S_{c-1})$ by hypothesis (\ref{not:SP})(i), and so, by Proposition~\ref{prop:push-lift}(i), 
\begin{enumerate}
\item 
each $M_{n}$ satisfies $(S_{c-n-1})$. 
\end{enumerate} 
For the desired result, it suffices to prove that
$\Tor^{R}_{i}(M_{c-1},N)=0$ for all $i\ge c$.  We will, in fact, prove
this for all $i\ge 1$.  To this end, we establish by induction  
that the following hold for $n = 0, \ldots, c-1$:
\begin{enumerate}\setcounter{enumi}{1}
\item
 $M_{n}\otimes_{R}N$ satisfies ($S_{c-n}$);
\item
$\Tor^{R}_{i}(M_{n},N)$ has finite length for all $i\gg 0$;
\item
$\Tor^{R}_{i}(M_{n},N)=0$ for $i=1,\dots,n+1$\,.
\end{enumerate}

For $n=0$, conditions (2) and (3) are part of (\ref{not:SP}), while (4) is from our hypothesis that $\Tor^{R}_{1}(M,N)=0$; recall that $M_0=M$. Assume that (2), (3) and (4) hold for some integer $n$ with $0\leq n \leq c-2$.

Tensor the pushforward of $M_n$ with $N$, see (\ref{pushforward}), to 
obtain 
\begin{equation} 
\label{eq:tor12}
\Tor^{R}_{i}(M_{n+1},N)\cong \Tor^{R}_{i-1}(M_{n},N) \text{ for all
  $i\ge 2$},
\end{equation}
and the following exact sequence in which $ F $ is finitely
generated and free: 
\begin{equation} 
 \label{eq:tor13}
0\to \Tor^{R}_{1}(M_{n+1},N) \to M_{n}\otimes_{R} N \to
F\otimes_{R} N \to M_{n+1}\otimes_{R}N \to  0 \, . 
\end{equation} 

Induction and \eqref{eq:tor12} imply that $\Tor^{R}_{i}(M_{n+1},N)$
has finite length for all $i\gg0$, so (3) holds; furthermore, by
Corollary~\ref{cor:tortor}, $\Tor^{R}_{i}(M_{n+1},N)$ is torsion for
all $i\ge 1$. (Recall that $\dim(R) \geq \codim(R)=c \geq 1$ so that finite
length modules are torsion.)  Since $n\le c-1$, condition (2) implies
that $M_{n}\otimes_{R}N$ satisfies ($S_{1}$) and hence
$M_{n}\otimes_{R}N$ is torsion-free; therefore the exact sequence
\eqref{eq:tor13} forces $\Tor^{R}_{1}(M_{n+1},N)$ to vanish. Now
\eqref{eq:tor12} gives (4). It remains to verify (2), namely, that 
$M_{n+1}\otimes_RN$ satisfies $(S_{c-n-1})$.  To that end, let
$\fp\in \Supp(M_{n+1}\otimes_{R}N)$.  We will verify that 
$\depth_{R_{\fp}}(M_{n+1}\otimes_{R}N)_{\fp}\geq \min\{c-n-1,
\height(\fp)\}$; see (\ref{Serre}).

Suppose $\height(\fp) \ge c-n$. Recall, by hypothesis (\ref{not:SP})(i), $N$
satisfies ($S_{c-1}$). Hence $F\otimes_{R}N$, a direct sum of copies
of $N$, satisfies (S$_{c-n-1})$. In particular it follows that
$\depth_{R_\fp} (F\otimes_{R}N)_\fp \ge c-n-1$. Furthermore, by (2) of
the induction hypothesis, we have that $\depth_{R_\fp}
(M_n\otimes_RN)_\fp \ge c-n$. Recall that
$\smash{\Tor^{R}_{1}(M_{n+1},N)=0}$. Therefore, localizing the short exact
sequence in \eqref{eq:tor13} at $\fp$, we conclude by the depth lemma
that $\depth_{R_{\fp}}(M_{n+1}\otimes_{R}N)_{\fp}\geq c-n-1$.

Next assume $\height(\fp) \leq c-n-1$. We want to show that $(M_{n+1}\otimes_RN)_\fp$ is maximal Cohen-Macaulay. By the induction hypotheses, $\Tor^{R}_{i}(M_{n},N)$ has finite length for all $i\gg 0$. As $n\ge 0$, we see that $\dim(R) \ge \codim(R)=c \geq c-n$, whence $\fp$ is not the maximal ideal. Thus $\Tor^{R}_{i}(M_{n},N)_{\fp}=0$ for all $i\gg 0$. Now, setting $w=c-n-1$ and using Corollary~\ref{cor:ci-van} for the pair $M_{n},N$, we conclude that $\Tor^{R}_{i}(M_{n},N)_{\fp}=0$ for all $i\geq 1$.  Then \eqref{eq:tor12} and the already established fact that $\Tor^{R}_{1}(M_{n+1},N)=0$ give $\Tor^{R}_{i}(M_{n+1},N)_{\fp}=0$ for all $i\ge 1$.  Thus the depth formula holds; see \eqref{dfor}:
\begin{equation*}
  \depth_{R_{\fp}}(M_{n+1})_{\fp}+\depth_{R_{\fp}}(N_{\fp})
  =\depth(R_{\fp})+\depth_{R_{\fp}}(M_{n+1}\otimes_{R}N)_{\fp}\,.  
\end{equation*}
Since Serre's conditions localize, $N_{\fp}$ is maximal Cohen-Macaulay
over $R_{\fp}$; see hypothesis (\ref{not:SP})(i). Also, $(M_{n+1})_{\fp}$ is maximal
Cohen-Macaulay whether or not $(M_{n})_{\fp}$ is zero; see the
pushforward sequence or Proposition~\ref{prop:push-lift}(ii).  By the
depth formula, $(M_{n+1}\otimes_RN)_\fp$ is maximal Cohen-Macaulay.  
Thus $M_{n+1}\otimes_{R}N$ satisfies (2), and the induction is complete.

Now we parallel the argument for the case $c=1$. At the end,
$\Tor^{R}_{i}(M_{c-1},N)$ has finite length for all $i\gg0$, and is
equal to $0$ for $i=1,\dots,c$. Tensoring $M_{c-1}$ with the pushforward of $N$, we get
\begin{gather} 
 \label{eq:tor14}
 \Tor^{R}_{i}(M_{c-1},N_{1})\cong 
 \Tor^{R}_{i-1}(M_{c-1},N) \text{ for all } i\ge 2,
\\
 \label{eq:tor15} {\text{and \quad }} 
\Tor^{R}_{1}(M_{c-1},N_{1}) \hookrightarrow M_{c-1} \otimes_{R} N. 
\end{gather}
In view of (\ref{eq:tor14}), it suffices to show that
$\Tor^{R}_{1}(M_{c-1},N_{1})=0$: this will imply
$\Tor^R_i(M_{c-1},N_1) = 0$ for all $i=1, \dots, c+1$, and hence
Murthy's rigidity theorem \cite[Theorem 1.6]{Mu} will yield that
$\Tor^{R}_{i}(M_{c-1},N_{1}) = 0$ for all $i\ge 1$, and consequently
$\Tor^{R}_{i}(M_{c-1},N) = 0$ for all $i\ge 1$ by \eqref{eq:tor14}. We
know that $M_{c-1}\otimes_{R}N$ is torsion-free. Therefore we use
(\ref{eq:tor15}) and Corollary~\ref{cor:tortor}, and obtain
$\Tor^{R}_{1}(M_{c-1},N_{1})=0$, as we did in the case $c=1$.
\end{proof}

\section{Amending the literature}
\label{sec:appendix}

We use Theorem~\ref{thm:tor1} to give a different proof of an
important result of Huneke and Wiegand; see Theorem
\ref{prop:second-rigidity} and the ensuing paragraph. We also point
out a missing hypothesis in a result of C.\ Miller \cite[Theorem
3.1]{Mi}, and state the  corrected form of her theorem in Corollary
\ref{cor:hyp-power}.  At the end of the paper we indicate an alterate route to the proof of 
the following result
\cite[Theorem 3.1]{HW1}, the main theorem of the 1994 paper of Huneke and Wiegand:

\begin{theorem}[Huneke and Wiegand \cite{HW1}]\label{prop:HW1-main} %
  Let $R$ be a hypersurface, and let $M$ and $N$ be finitely generated
  $R$-modules. If $M$ or $N$ has rank, and $M\otimes_RN$ is maximal
  Cohen-Macaulay, then both $M$ and $N$ are maximal Cohen-Macaulay,
  and either $M$ or $N$ is free.
\end{theorem}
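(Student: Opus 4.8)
The plan is to deduce this from Theorem~\ref{thm:tor1} applied with $c=\codim R=1$, which requires first establishing that $\Tor_1^R(M,N)=0$ and that $M$ or $N$ is torsion-free. First I would reduce to the case where $R$ is complete (nothing in the statement is affected), and dispose of trivialities: if $M$ or $N$ is zero, or if $R$ is regular, the claim is immediate, so assume $R$ is a hypersurface of codimension $1$ with $\dim R\ge 1$ and both modules nonzero. Say $M$ has rank. Since $M\otimes_RN$ is maximal Cohen-Macaulay, it satisfies $(S_1)$, hence is torsion-free; by Remark~\ref{Noproof} we get $M\otimes_RN\cong M\otimes_R(\tf RN)$ and we will see below that $N$ is forced to be torsion-free, so $M,N$ both satisfy $(S_0)$ vacuously. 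The key point is that because $M$ has rank, $\Tor_i^R(M,N)_\fp=0$ at every minimal prime $\fp$ (there $M_\fp$ is free), so $\Tor_i^R(M,N)$ is torsion for all $i$; then Corollary~\ref{cor:tortor} is not quite what I want — instead I want to run the hypersurface machinery.

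The cleaner route: observe that the pair $M,N$ satisfies $\SP 1$. Indeed condition (i) of $\SP 1$ asks for $(S_0)$, which is vacuous; condition (ii) asks that $M\otimes_RN$ satisfy $(S_1)$, which holds since it is maximal Cohen-Macaulay; condition (iii) asks that $\Tor_i^R(M,N)$ have finite length for $i\gg0$ — here I would first localize at any non-maximal prime $\fp$, where $M_\fp$ has finite projective dimension if it is free over $R_\fp$... but that need not hold. So instead I invoke the rank hypothesis differently: at each minimal prime $M$ is free, hence $\Tor_{\ge1}^R(M,N)$ vanishes at minimal primes, i.e. is torsion. To upgrade "torsion" to "finite length at $\fp$ for $\height\fp=1$" I would use that over a one-dimensional hypersurface $R_\fp$, $M_\fp\otimes N_\fp$ maximal Cohen-Macaulay and $M_\fp$ having rank forces — via the $c=1$ case of Huneke--Wiegand, i.e. Theorem~\ref{thm:tor1} itself applied over $R_\fp$ — that $\Tor^{R_\fp}_{\ge1}(M_\fp,N_\fp)=0$. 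This gives $\Tor_i^R(M,N)$ of finite length for $i\gg0$, so $\SP 1$ holds, and moreover Remark~\ref{Noproof} combined with $\Tor_1=0$ locally shows $N$ is torsion-free (hence so is $M$ after symmetrizing, using that $M$ has rank).

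Once $\SP 1$ is in hand together with torsion-freeness of $N$, and once I check $\Tor_1^R(M,N)=0$ — which follows because $\Tor_1^R(M,N)$ is torsion (vanishes at minimal primes) and embeds, after a pushforward of $N$, into the torsion-free module $M\otimes_RN$, exactly as in the $c=1$ portion of the proof of Theorem~\ref{thm:tor1} — Theorem~\ref{thm:tor1} delivers $\Tor_i^R(M,N)=0$ for all $i\ge1$. Then the depth formula (\ref{dfor}) gives $\depth M+\depth N=\depth R+\depth(M\otimes_RN)=2\dim R$, forcing $\depth M=\depth N=\dim R$, i.e. both $M$ and $N$ are maximal Cohen-Macaulay. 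Finally, to see that $M$ or $N$ is free: since $\Tor_{\ge1}^R(M,N)=0$ with $M$ maximal Cohen-Macaulay over the hypersurface $R$, $M$ has a bounded (eventually $2$-periodic) free resolution; the vanishing of all higher Tor against $N\ne0$ forces $M$ to have finite projective dimension (here is where one invokes a rigidity/periodicity argument over hypersurfaces — e.g. if $M$ is not free its minimal resolution is $2$-periodic and one produces a nonzero $\Tor$ in every degree against any nonzero $N$, a contradiction), hence $M$ is free by Auslander--Buchsbaum since $\pd M+\depth M=\depth R$.

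The main obstacle I expect is the last step — extracting "$M$ or $N$ is free" from the total vanishing of Tor — and the bootstrapping needed to verify condition (iii) of $\SP1$ (finite length of $\Tor_i$ for $i\gg0$) without assuming $M$ is locally free off the closed point; both are handled by localizing to dimension one and invoking the already-proved $c=1$ instance of Theorem~\ref{thm:tor1}, so the argument is genuinely an induction on $\dim R$ feeding off the hypersurface rigidity already established.
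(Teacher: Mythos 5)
Your overall skeleton (get Tor-vanishing from Theorem~\ref{thm:tor1} with $c=1$, then use the depth formula, then extract freeness) is the paper's skeleton as well, but two of your key steps fail as written. First, your verification of the hypothesis $\Tor_1^R(M,N)=0$ of Theorem~\ref{thm:tor1} is incorrect: tensoring $M$ with the pushforward sequence \eqref{eq:pushforward} for $N$ embeds $\Tor_1^R(M,N_1)$ into $M\otimes_RN$, where $N_1$ is the pushforward, whereas $\Tor_1^R(M,N)\cong\Tor_2^R(M,N_1)$ is not controlled by that embedding. Moreover no argument using only ``$\Tor$ is torsion and $M\otimes_RN$ is torsion-free'' can yield $\Tor_1^R(M,N)=0$: over $R=k[\![x,y]\!]/(xy)$ with $M=N=R/(x)$, both modules are torsion-free, $M\otimes_RN\cong R/(x)$ is torsion-free, the pair satisfies $\SP 1$, and yet $\Tor_1^R(M,N)\cong k\neq 0$; your justification does not invoke the rank hypothesis at this point, so it would apply verbatim to this example. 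The repair is exactly what the paper does in proving Theorem~\ref{prop:second-rigidity}: apply Theorem~\ref{thm:tor1} to the pair consisting of a pushforward and the other module (say $M_1,N$), where the first vanishing $\Tor_1^R(M_1,N)=0$ \emph{does} follow from the embedding into $M\otimes_RN$ because the module with rank makes that Tor torsion; one establishes $\SP 1$ for that pair by induction on $\dim R$ together with Proposition~\ref{prop:push-lift2}, and then descends along \eqref{eq:pushforward}. Since a maximal Cohen-Macaulay tensor product over the (Gorenstein) ring $R$ is reflexive, you could equally well just quote Theorem~\ref{prop:second-rigidity} at this stage.

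Second, your final step is false as stated: from $\Tor_i^R(M,N)=0$ for all $i\ge1$ and $N\neq0$ one cannot conclude that $M$ has finite projective dimension. Take $N=R$ (or any module of finite projective dimension) and $M$ any non-free maximal Cohen-Macaulay module with rank: all hypotheses of the theorem hold, all higher Tors vanish, and it is $N$, not $M$, that is free --- a two-periodic minimal resolution of $M$ does not produce nonzero Tor against an arbitrary nonzero $N$. The statement you actually need is Proposition~\ref{prop:HW2-fin-proj-dim} (Huneke--Wiegand \cite[Theorem 1.9]{HW2}, also Miller \cite{Mi}): over a hypersurface, $\Tor_i^R(M,N)=0$ for $i\gg0$ forces at least \emph{one} of $M,N$ to have finite projective dimension. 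This is a substantive theorem (proved by complexity or matrix-factorization arguments), not a routine periodicity remark, and your sketch in effect assumes an incorrectly strengthened version of it. Granting it, the depth formula (\ref{dfor}) makes both modules maximal Cohen-Macaulay and Auslander--Buchsbaum makes the finite-projective-dimension one free; this is precisely the paper's route, namely Theorem~\ref{thm:tor1} $\Rightarrow$ Theorem~\ref{prop:second-rigidity}, and then, as Miller observed, Theorem~\ref{prop:HW1-main} follows from Theorem~\ref{prop:second-rigidity} together with Proposition~\ref{prop:HW2-fin-proj-dim}.
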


Theorem \ref{prop:HW1-main} and its variations  have been analyzed, used, and studied in the literature; see \cite{CeW} and \cite{surveydao} for
some history and many consequences of the theorem. The following result \cite[Theorem 2.7]{HW1} played an important role in its proof.

\begin{theorem}[Huneke and Wiegand \cite{HW1}] 
\label{prop:second-rigidity}
Let $R$ be a hypersurface and let $M,N$ be nonzero finitely  generated $R$-modules. Assume $M\otimes_{R}N$ is reflexive and that $N$ has  rank.  Then the following conditions hold:
\begin{enumerate}[\rm(i)]
\item\label{eq:deadTor}
$\Tor^{R}_{i}(M,N)=0$ for all $i\ge 1$.
\item\label{eq:refl-tf}$M$ is reflexive, and $N$ is torsion-free.
\end{enumerate}
\end{theorem}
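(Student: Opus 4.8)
The plan is to induct on $\dim R$: after reducing to the case that $N$ is torsion-free and $\Tor^R_i(M,N)$ has finite length for $i\gg0$, I would extract the single vanishing $\Tor^R_1(M,N)=0$ from the rank hypothesis on $N$ and then let Theorem~\ref{thm:tor1} propagate it; the torsion-freeness of $N$ and the reflexivity of $M$ will come out at the end from the depth formula.

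First the preliminary reductions. The base case $\dim R=0$ is trivial, since there a rank-$r$ factor is free. If $\codim R=0$ then $R$ is regular and the statement is classical: over a regular local ring a torsion-free tensor product one of whose factors has rank is Tor-independent (\cite{Li}), whence the depth formula; so assume $\codim R=1$. Since $M\otimes_R N$ is reflexive it is torsion-free (see (\ref{TRM})), so Remark~\ref{Noproof} gives $M\otimes_R N\cong M\otimes_R\tf R N$ with $\tf R N$ torsion-free of rank $r$; it suffices to prove the theorem for $(M,\tf R N)$, after which Remark~\ref{Noproof} forces $\tp R N=0$ and all conclusions descend to $(M,N)$. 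So assume $N$ is torsion-free (and $r\ge1$, as $N\ne0$). Completing and using the forward direction of Lemma~\ref{lem:ascent}(3), we may take $R$ complete, $R=Q/(f)$ with $Q$ a complete regular local ring and $f$ in the square of its maximal ideal. Finally, localizing at non-maximal primes and invoking the inductive hypothesis (reflexivity and torsion-freeness localize; Lemma~\ref{lem:ascent}) shows $\Tor^R_i(M,N)_\fp=0$ for all $i\ge1$ and all $\fp\ne\fm$, so $\Tor^R_i(M,N)$ has finite length for $i\gg0$; in particular the pair $(M,N)$ satisfies $\SP 1$ (indeed $\SP 2$, since $M\otimes_R N$ satisfies $(S_2)$; see (\ref{Serre}) and (\ref{not:SP})).

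The crux is $\Tor^R_1(M,N)=0$. Since $N$ is torsion-free of rank $r$ it embeds in $R^r$ with torsion cokernel $C$; tensoring $0\to N\to R^r\to C\to 0$ with $M$, the kernel of $M\otimes_R N\to M^r$ equals $\Tor^R_1(M,C)$, and this kernel localizes to zero at every minimal prime of $R$ (there $N\to R^r$ is an isomorphism, $R$ being Cohen-Macaulay), so, being a submodule of the torsion-free module $M\otimes_R N$ supported off the minimal primes, it vanishes. Likewise $\Tor^R_1(M,N_1)=0$ for a pushforward $N_1$ of $N$. To upgrade these auxiliary vanishings to $\Tor^R_1(M,N)$ itself I would either press on the \emph{reflexivity} (not merely torsion-freeness) of $M\otimes_R N$ to squeeze out one further vanishing, or --- the more robust route --- first perform a flat local base change $R\rightsquigarrow R[t]_{(\fm,t)}$ with Cohen-Macaulay fibers (raising $\dim R$, hence making the vanishing of $\Tor$ at primes of height $\le1$ available), and, if necessary, replace $M$ by $\tf R M$ (whose torsion is then finite length, hence harmless against the torsion-free $M\otimes_R N$); then lower the codimension by passing to the quasi-liftings $E,F$ of $M,N$ to $Q$: these have positive rank over the domain $Q$ (the cokernels in \eqref{eq:quasilifting} are $f$-torsion), $E\otimes_Q F$ should be torsion-free over $Q$ by \cite[Theorem~4.8]{HJW} exactly as in the proof of Theorem~\ref{thm:eta2}, so the regular-local case \cite{Li} gives $\Tor^Q_i(E,F)=0$ for all $i$, and Proposition~\ref{prop:push-lift2}(ii) returns $\Tor^R_i(M,N)=0$ for all $i\ge1$ outright.

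Granting $\Tor^R_1(M,N)=0$, Theorem~\ref{thm:tor1} with $c=1$ (applicable since $N$ is torsion-free and $(M,N)$ satisfies $\SP 1$) gives $\Tor^R_i(M,N)=0$ for all $i\ge1$, so the depth formula (\ref{dfor}) holds; then $\depth M=\dim R+\depth(M\otimes_R N)-\depth N\ge\depth(M\otimes_R N)\ge\min\{2,\dim R\}$, so $M$ is torsion-free (and, once $\dim R\ge2$, has depth $\ge2$). Together with the reflexivity of $M_\fp$ for $\fp\ne\fm$ (inductive hypothesis), this shows $M$ satisfies $(S_2)$, hence is reflexive; see (\ref{Serre}). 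Combined with the first reduction, both conclusions follow. The step I expect to be the main obstacle is squarely the vanishing $\Tor^R_1(M,N)=0$: the rank hypothesis on $N$ delivers it cheaply for the auxiliary modules $C$ and $N_1$ via the minimal-prime device, but transferring it to $N$ itself forces one either to exploit the full reflexivity of $M\otimes_R N$ (rather than mere torsion-freeness) or to descend the codimension to the regular case --- and the latter is what necessitates the preliminary enlargement to $\dim R\ge2$. Everything else --- the regular-local input \cite{Li}, Theorem~\ref{thm:tor1}, Lemma~\ref{lem:ascent}, the depth formula, and the quasi-lifting apparatus of \cite{HJW} --- is already in hand.
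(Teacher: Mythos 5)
Your reductions (to $N$ torsion-free, to $\Tor^R_i(M,N)$ of finite length via induction on $\dim R$, and the final passage from the torsion-free case back to general $M,N$ via Remark~\ref{Noproof} and the localized depth formula) match the paper's, and you have correctly isolated the crux; but neither of your two routes to that crux closes it. The first (``press on the reflexivity to squeeze out one further vanishing'') names no mechanism. The second is circular exactly where it is needed: when $\dim R=1$ the height-one primes of $R$ include $\fm$ itself, and after the base change $R\to R'=R[t]_{(\fm,t)}$ the new height-one prime $\fm R'$ satisfies $\Tor^{R'}_i(M',N')_{\fm R'}\cong\Tor^R_i(M,N)\otimes_RR'_{\fm R'}$, which vanishes if and only if $\Tor^R_i(M,N)=0$, by faithful flatness --- precisely what is to be proved; your dimension induction gives nothing there, since $R'_{\fm R'}$ has dimension $1=\dim R$, not less. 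Because the inductive step for $d\ge2$ quotes the theorem for one-dimensional localizations, this hole propagates upward. (A further soft spot: invoking \cite[Theorem 4.8]{HJW} ``exactly as in Theorem~\ref{thm:eta2}'' at relative codimension one only yields that $E\otimes_QF$ satisfies $(S_0)$, which is vacuous, so the torsion-freeness of $E\otimes_QF$ would need a separate justification.)

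The missing idea is that one should not aim at $\Tor^R_1(M,N)=0$ at all. The paper pushes forward on the $M$ side: tensoring \eqref{eq:pushforward} with $N$ shows that $\Tor^R_1(M_1,N)$ embeds in $M\otimes_RN$ and is torsion (because $N$ has rank), hence vanishes --- this is the same ``cheap'' vanishing you found, but on the other factor, and it is the one that can be exploited. For $d\ge2$, the finite-length information from the induction together with Proposition~\ref{prop:push-lift2}(i) shows $M_1\otimes_RN$ is torsion-free, so the pair $M_1,N$ satisfies $\SP{1}$ and Theorem~\ref{thm:tor1} applies to \emph{that} pair; the pushforward sequence then returns $\Tor^R_i(M,N)\cong\Tor^R_{i+1}(M_1,N)=0$ for all $i\ge1$. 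For $d=1$ the paper uses an input absent from your proposal: writing $0\to N\to F\to C\to0$ with $C$ of finite length, the isomorphism $\Tor^R_2(M_1,C)\cong\Tor^R_1(M_1,N)=0$ combined with \cite[Corollary 2.3]{HW1} (rigidity against finite-length modules over a hypersurface) kills $\Tor^R_i(M_1,C)$ for $i\ge2$, hence $\Tor^R_i(M_1,N)$ for $i\ge1$. Your concluding steps would be fine once this vanishing is in place, but as written the dimension-one case, and with it the whole induction, is unproved.
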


Theorem \ref{prop:second-rigidity} was established by Huneke and Wiegand in \cite[Theorem 2.7]{HW1}: however their conclusion was that
{\em both} $M$ and $N$ are reflexive, and  the proof of this stronger claim is flawed. Dao realized the oversight of \cite[Theorem 2.7]{HW1}, and Huneke and Wiegand addressed it in the erratum \cite{HW1e}. A similar flaw can be found in Miller's paper; see \cite[Theorems 1.3 and 1.4]{Mi} and compare it with our correction in Corollary \ref{cor:hyp-power}. The version stated above reflects our current understanding and is from the paper \cite{CeP}. We do not yet know whether $N$ is forced to be reflexive, that is, the question below remains open; cf. \cite[Theorem 2.7]{HW1} and \cite[Theorem 1.3]{Mi}.

\begin{question} 
\label{Gregproveit} 
Let $R$ be a hypersurface and $M,N$ nonzero finitely generated $R$-modules. If  $N$ has rank and $M\otimes_{R}N$ is reflexive, must  \emph{both} $M$ and $N$ be reflexive?
\end{question}
This question has been recently studied in \cite{CeP}, which gives partial answers using the New Intersection Theorem.

We now show how Theorem~\ref{prop:second-rigidity} follows from Theorem~\ref{thm:tor1}. In fact, one needs only the case 
$c=1$ of Theorem~\ref{thm:tor1}.

\begin{proof}[Proof of Theorem~\ref{prop:second-rigidity} using
  Theorem \ref{thm:tor1}] Set $d=\dim R$. If $d = 0$, then $N$
  is free (since it has rank), so all is well.  From now on assume 
  $d\ge1$.
 We remark at the outset that neither $M$ nor
  $N$ can be torsion, i.e., $\tf R M\ne 0$ and $\tf R N\ne 0$. Also,
  by the assumption of rank, $\Supp(N) = \Spec(R)$.  Suppose first
  that both $M$ and $N$ are torsion-free; we will prove
  \eqref{eq:deadTor} by induction on $d=\dim R$.  
  Let $M_{1}$ denote the pushforward of $M$; see
  (\ref{pushforward}).  Then $\Tor^R_1(M_1,N)$ is torsion as $N$
  has rank.  Since $M\otimes_RN$ is torsion-free, applying $ {-}
  \tensor_R N $ to 
  \eqref{eq:pushforward} shows that   
\begin{equation}\label{eq:snort}
  \Tor^R_1(M_1,N)=0\,. 
\end{equation}

Suppose for the moment that  $d=1$.  Since $N$ has rank, there is an exact sequence
\[
0\to N \to F \to C \to 0\,,
\]
in which $F$ is free and $C$ is torsion. (See \cite[Lemma 1.3]{HW1}.)  Note that $C$ is of finite length since $d=1$. Note also that $\Tor_2^R(M_1,C) \cong \Tor_1^R(M_1,N) = 0$; see \eqref{eq:snort}. Therefore \cite[Corollary 2.3]{HW1} implies that $\Tor^R_i(M_1,C)=0$ for all $i\ge 2$,  and hence $\Tor_i^R(M_1,N)=0$ for all $i\ge1$. Now (\ref{eq:pushforward}) establishes (i).

Still assuming that both $M$ and $N$ are torsion-free, let $d\ge2$. The inductive hypothesis implies that $\Tor_i^R(M,N)$ has finite length for all $i\ge 1$. In particular $\Tor_{i}^{R}(M,N)_{\fq}=0$ for all prime ideals $\fq$ of $R$ of height at most one. Therefore Proposition~\ref{prop:push-lift2}
shows that $M_1\otimes_RN$ is torsion-free, that is, $M_1\otimes_RN$ satisfies $(S_1)$; see (\ref{TRM}) and (\ref{Serre}). Furthermore, from the pushforward exact sequence \eqref{eq:pushforward}, we see that $\Tor_i^R(M_1,N)$ has finite length for all $i\ge 2$. Consequently the pair $M_1$, $N$ satisfies $\SP{1}$. Now Theorem~\ref{thm:tor1}, applied to $M_1$, $N$, shows that $\Tor_i^R(M_1,N)=0$ for all $i\ge1$.  By
\eqref{eq:pushforward}, we see that $\Tor_i^R(M,N)=0$ for all $i\ge 1$. This proves \eqref{eq:deadTor} under the additional assumption that $M$ and $N$ are torsion-free.

Since $M\otimes_{R}N$ is torsion-free, it follows from (\ref{Noproof}) that there are isomorphisms
\[
M\otimes_{R}N \cong M\otimes_{R}\tf RN \cong \tf RM\otimes_{R} N \cong \tf RM\otimes_{R}\tf RN\,.
\]
In particular,  $\tf RM\otimes_{R}\tf RN$  is also reflexive.  As noted before, neither $M$ nor $N$ is torsion so $\tf RM$ and $\tf RN$ are nonzero. As $N$ has rank so does $\tf RN$,  so the already established part of the result (applied to $\tf RM$ and $\tf RN$) yields 
\[
\Tor^{R}_{i}(\tf RM,\tf RN)=0\quad\text{for $i\ge 1$.}
\]
Given this, since $\tf RM\otimes_{R}N$ is torsion-free by the isomorphisms above, applying (\ref{Noproof}) to the $R$-modules $\tf RM$ and $N$ gives  $N=\tf RN$; then applying (\ref{Noproof}) to $M$ and $N$ yields $M=\tf RM$. In conclusion, $M$ and $N$ are torsion-free, and hence $\Tor_i^R(M,N)=0$ for all $i\ge1$.  From the last, the depth formula holds.  

The remaining step is to prove that $M$ is reflexive.  Since $\Supp(N)
= \Spec(R)$, we have $\depth(N_\fp) \le \height(\fp)$ for all primes
$\fp$ of $R$.  Localizing the depth formula \eqref{dfor} shows Serre's
condition $(S_2)$ on $M$;  see (\ref{Serre}).  
\end{proof}

The next result is due to C.~Miller. In her paper \cite{Mi}, the
essential requirement --- that $M$ have rank --- is missing: for
example, the module $M=R/(x)$ over the node $k[\![x,y]\!]/(xy)$ is not
free, yet $M\otimes_RM$, which is just $M$, is maximal Cohen-Macaulay
and hence reflexive.  We state her result here in its corrected
form and include a proof for completeness.

\begin{corollary}{\em (C.~Miller \cite[Theorem
    3.1]{Mi})} \label{cor:hyp-power} %
  Let $R$ be a $d$-dimensional hypersurface and let $M$ a finitely
  generated $R$-module with rank.  If $\otimes_R^nM$ is reflexive for
  some $n\ge \max\{2, d-1\}$, then $M$ is free.
\end{corollary}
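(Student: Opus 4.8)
The plan is to use Theorem~\ref{prop:second-rigidity} to ``peel off'' copies of $M$ from $\otimes_R^n M$, reducing everything to facts about $M$ alone, and then to weigh the reflexivity of $\otimes_R^n M$ against a Betti-number computation. First I would dispose of trivialities: we may assume $M\ne 0$, and if $\dim R=0$ then $R$ is Artinian local, so $\fm\in\Ass R$; hence every non-zerodivisor of $R$ is a unit, the total quotient ring of $R$ equals $R$, and the hypothesis that $M$ has rank already forces $M$ to be free. So assume $d:=\dim R\ge 1$, whence $n\ge\max\{2,d-1\}\ge 2$.

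For the peeling step, fix $j$ with $2\le j\le n$ and write $\otimes_R^{j}M=(\otimes_R^{j-1}M)\otimes_R M$. Applying Theorem~\ref{prop:second-rigidity} with first module $\otimes_R^{j-1}M$ and second module $M$ (the latter has rank, and both are nonzero since $M\ne 0$), the reflexivity of $\otimes_R^{j}M$ gives that $\otimes_R^{j-1}M$ is reflexive and that $\Tor_i^R(\otimes_R^{j-1}M,M)=0$ for all $i\ge 1$. Starting from the hypothesis that $\otimes_R^n M$ is reflexive and letting $j$ descend from $n$ to $2$, we obtain that $\otimes_R^{j}M$ is reflexive for every $1\le j\le n$ --- in particular $M$ itself is reflexive --- and that $\Tor_i^R(\otimes_R^{j}M,M)=0$ for all $i\ge 1$ and all $0\le j\le n-1$; the case $j=1$ says $\Tor_i^R(M,M)=0$ for all $i\ge 1$.

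Now I would bring in rigidity over hypersurfaces. Since $\Tor_i^R(M,M)=0$ for $i\gg 0$ and $R$ is a hypersurface, $\pd_R M<\infty$ by \cite[Theorem 1.9]{HW2}. By \cite[Corollary 1.3]{Au}, using the vanishing of the modules $\Tor_i^R(\otimes_R^{j}M,M)$ established above, $\pd_R(\otimes_R^n M)=n\cdot\pd_R M$. On the other hand $\otimes_R^n M$ is a nonzero reflexive module over the Gorenstein ring $R$, hence satisfies $(S_2)$ (see~\ref{Serre}), so $\depth(\otimes_R^n M)\ge\min\{2,d\}$; since $\pd_R(\otimes_R^n M)<\infty$, the Auslander--Buchsbaum formula \cite[Theorem 3.7]{AuBu} yields
\[
n\cdot\pd_R M=\pd_R(\otimes_R^n M)=d-\depth(\otimes_R^n M)\le d-\min\{2,d\}.
\]
If $d=1$ the right-hand side is $0$; if $d\ge 2$ it equals $d-2$, and then $n\ge d-1$ together with $n\cdot\pd_R M\le d-2$ forces $\pd_R M=0$. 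Hence $\pd_R M=0$, i.e., $M$ is free.

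The main obstacle is the implication ``$\Tor_i^R(M,M)=0$ for $i\gg 0\Rightarrow\pd_R M<\infty$'' over a hypersurface: this is where the hypersurface hypothesis (rather than a general complete intersection) does essential work, and it is also why the rank hypothesis cannot be dropped --- without rank the peeling step collapses, and $R/(x)$ over $k[\![x,y]\!]/(xy)$ shows the conclusion really fails. If one wishes to avoid relying on \cite[Theorem 1.9]{HW2} beyond its literal statement, one can first prove by induction on $d$ that $M$ is free on the punctured spectrum: for a prime $\fp\ne\fm$, $R_\fp$ is again a hypersurface with $\dim R_\fp\le d-1$, $M_\fp$ has rank, and $(\otimes_R^n M)_\fp=\otimes_{R_\fp}^n(M_\fp)$ is reflexive with $n\ge\max\{2,\dim R_\fp-1\}$, so $M_\fp$ is free by induction; this makes $\Tor_i^R(M,M)$ have finite length for every $i$, and then the rigidity input applies unconditionally.
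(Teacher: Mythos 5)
Your argument is correct and follows essentially the same route as the paper's: repeated application of Theorem~\ref{prop:second-rigidity} to peel off tensor factors (giving reflexivity of the lower powers and the Tor vanishing), \cite[Theorem 1.9]{HW2} to obtain $\pd_R M<\infty$, and then a numerical count forced by $\depth(\otimes_R^nM)\ge 2$ together with $n\ge d-1$. The only difference is cosmetic: the paper runs the count through the depth formula by induction on $r$ (treating $d\le 2$ separately via Theorem~\ref{prop:HW1-main}), whereas you phrase it as $\pd_R(\otimes_R^nM)=n\cdot\pd_R M$ using \cite[Corollary 1.3]{Au} and the Auslander--Buchsbaum formula, exactly as in the paper's Proposition~\ref{prop:powers}; the two counts are equivalent.
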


\begin{proof} If $d\le 2$, then $\otimes_R^nM$ is maximal
  Cohen-Macaulay, and Theorem~\ref{prop:HW1-main} gives the result.
  Assume now that $d\ge 3$.  Applying Theorem
  ~\ref{prop:second-rigidity} and \cite[Theorem 1.9]{HW2} repeatedly,
  we conclude the following:
\begin{enumerate}[\rm(i)]
\item\label{item:refl} $\otimes^r_RM$ is reflexive for all $r=1, \dots, n$. 
\item\label{item:deadTor} $\Tor^R_i(M,\otimes^{r-1}_RM) = 0$ for all
  $i\geq 1$ and all $r=2, \dots, n$.  
\item\label{item:fpd} $\pd(M)<\infty$.
\end{enumerate}
It follows from (i) that $\depth(\otimes_R^rM)\geq 2$ for all $r=1,
\dots, n$; see (\ref{Serre}).  Also, (ii) implies the depth formula: 
\[
\depth(M) + \depth(\otimes^{r-1}_RM) = d + \depth(\otimes^r_RM)\,,
\]
for all $r=2, \dots, n$.  One checks by induction on $r$ that 
\[
r \cdot \depth(M) = (r-1 ) \cdot d + \depth(\otimes_R^rM)\,,
\]
for  $r=2, \dots, n$. Setting $r = n$, and using the inequalities $n\ge d-1$ and
$\depth(\otimes_R^nM)\geq 2$, we obtain:
\[
n \cdot \depth(M) \ge (n-1) \cdot d+2 = n \cdot (d-1) + n-d+2 \ge n \cdot (d-1) + 1.
\]
Therefore $\depth(M)\geq d$, that is, $M$ is maximal Cohen-Macaulay.  Now \eqref{item:fpd} and the Auslander-Buchsbaum formula \cite[Theorem 3.7]{AuBu} imply that $M$ is free.
 \end{proof}
 
A consequence of Theorems~\ref{prop:HW1-main} and \ref{prop:second-rigidity} is the following result  \cite[Theorem 1.9]{HW2}, observed by Huneke and Wiegand in their  1997 paper:

\begin{proposition}[\cite{HW2}]\label{prop:HW2-fin-proj-dim}
Let $M$ and $N$ be finitely generated modules over a hypersurface $R$, and assume that $\Tor_i^R(M,N) = 0$ for $i\gg0$.  Then at least one of the modules has finite projective dimension.
\end{proposition}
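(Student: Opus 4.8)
The plan is to prove the contrapositive: assuming $\Tor_i^R(M,N)=0$ for $i\gg0$ and $\pd_RM=\pd_RN=\infty$, I will derive a contradiction. We may assume $M,N\neq0$, and, replacing $R$ by $\widehat R$ and $M,N$ by their completions (which affects neither the hypothesis nor the conclusion), that $R=S/(f)$ for a complete regular local ring $(S,\fn)$. Set $d=\dim R$ and pass to the $(d+1)$st syzygies $\overline M:=\Omega^{d+1}_RM$ and $\overline N:=\Omega^{d+1}_RN$. Since $R$ is Cohen--Macaulay of dimension $d$, an $\ell$th syzygy with $\ell\geq d$ has depth $d$, so $\overline M$ and $\overline N$ are maximal Cohen--Macaulay; they are nonzero and non-free, because $M$ and $N$ have infinite projective dimension; and for every $i\geq1$ there is an isomorphism $\Tor_i^R(\overline M,\overline N)\cong\Tor^R_{i+2d+2}(M,N)$, so $\Tor_i^R(\overline M,\overline N)=0$ for $i\gg0$.

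Next I would run the modules $\overline M,\overline N$ through the available tools. Since $\overline M$ is maximal Cohen--Macaulay, Theorem~\ref{prop:jorgensen} promotes the eventual vanishing to $\Tor_i^R(\overline M,\overline N)=0$ for \emph{all} $i\geq1$. The depth formula (\ref{dfor}) then yields $\depth(\overline M\otimes_R\overline N)=\depth\overline M+\depth\overline N-\depth R=d$, and, as $\overline M\otimes_R\overline N\neq0$ by Nakayama, $\overline M\otimes_R\overline N$ is maximal Cohen--Macaulay --- in particular reflexive, which is also the hypothesis needed to invoke Theorem~\ref{prop:second-rigidity}. At this stage Theorem~\ref{prop:HW1-main}, applied to $\overline M$ and $\overline N$, forces one of them to be free, contradicting $\pd_RM=\pd_RN=\infty$ and completing the argument.

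The one point that requires genuine care --- and which I expect to be the main obstacle --- is the hypothesis of Theorem~\ref{prop:HW1-main} that one of $\overline M,\overline N$ have rank; this is automatic when $R$ is a domain, but not in general. To supply it I would descend to the total quotient ring, which for a hypersurface is a finite product $Q(R)\cong Q_1\times\cdots\times Q_r$ of Artinian local hypersurfaces $Q_j\cong A_j/(t^{n_j})$ with $A_j$ a discrete valuation ring. Localizing the relations $\Tor_i^R(\overline M,\overline N)=0$ at the minimal primes of $R$ and using the elementary fact that $\Tor_1^{Q_j}(A_j/(t^a),A_j/(t^b))\neq0$ whenever $1\leq a,b\leq n_j-1$, one sees that over each $Q_j$ one of $\overline M,\overline N$ is free; and a short matrix argument shows that an injection of finitely generated free $Q_j$-modules splits, so that if $\overline M$ is free at every minimal prime then so is its pushforward $\overline M_1$ from (\ref{pushforward}), whence $\overline M\oplus\overline M_1$ has the constant rank $\nu_R(\overline M^*)$ and Theorem~\ref{prop:HW1-main} applies to $\overline M\oplus\overline M_1$ and $\overline N$ (either alternative of its conclusion contradicts $\pd_RM=\pd_RN=\infty$). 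The symmetric case with $\overline N$ is the same, and the remaining case --- $R$ non-reduced with several minimal primes and neither module free at all of them --- I would handle by the iterated pushforward bookkeeping of the proof of Theorem~\ref{thm:tor1}, keeping the relevant tensor products torsion-free at each step. Apart from this rank reduction, the proof is the routine syzygy-and-depth-formula argument of the first two paragraphs.
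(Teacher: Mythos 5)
The paper itself does not prove this proposition: it quotes \cite[Theorem 1.9]{HW2} (with Miller's independent proof in \cite{Mi}), remarking only that it can be seen as a consequence of Theorems \ref{prop:HW1-main} and \ref{prop:second-rigidity}. Your skeleton is the natural way to attempt that deduction, and its first two paragraphs are sound: complete, pass to $(d+1)$st syzygies $\overline M,\overline N$, use Theorem \ref{prop:jorgensen} to upgrade to $\Tor^R_i(\overline M,\overline N)=0$ for all $i\ge 1$, and use the depth formula (\ref{dfor}) to make $\overline M\otimes_R\overline N$ maximal Cohen--Macaulay. But the obstacle you yourself flag, namely supplying the rank hypothesis of Theorem \ref{prop:HW1-main}, is not actually overcome by your sketch, and it is the whole point.

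Concretely: vanishing of $\Tor_1$ at each minimal prime $\fp$ forces $\overline M_\fp$ or $\overline N_\fp$ to be free, but which one may vary with $\fp$. Your direct-sum-with-pushforward device only works when one fixed module is free at \emph{every} minimal prime. In the remaining mixed case ($R$ non-reduced, $\overline M$ non-free at some minimal prime $\fp$ and $\overline N$ non-free at another $\fq$), neither $\overline M\oplus\overline M_1$ nor $\overline N\oplus\overline N_1$ has rank, and no module with rank is produced; iterating pushforwards cannot repair this, since localizing \eqref{eq:pushforward} at $\fp$ shows that if $(\overline M_1)_\fp$ were free then $\overline M_\fp$ would be a direct summand of a free $R_\fp$-module, hence free. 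The appeal to ``the iterated pushforward bookkeeping of the proof of Theorem \ref{thm:tor1}'' is therefore not an argument: that proof has Serre-type hypotheses playing exactly the role that rank plays here, and what must be excluded in the mixed case is precisely the content of the proposition for non-reduced hypersurfaces. This is why the proofs in the literature (Miller's complexity argument in \cite{Mi}, support varieties as in \cite{AvBu}, or the original argument of \cite{HW2}) avoid rank altogether. A secondary, fixable, omission: even in your good branch, before citing Theorem \ref{prop:HW1-main} for $\overline M\oplus\overline M_1$ and $\overline N$ you must verify that $(\overline M\oplus\overline M_1)\otimes_R\overline N$ is maximal Cohen--Macaulay; this needs $\Tor^R_1(\overline M_1,\overline N)=0$ (it is torsion and embeds in the torsion-free module $\overline M\otimes_R\overline N$), the depth formula for the pair $\overline M_1,\overline N$, and Proposition \ref{prop:push-lift}(ii) to see that $\overline M_1$ is maximal Cohen--Macaulay.
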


At about the same time Miller \cite{Mi} obtained the same result independently, by an elegant, direct argument.  As Miller observed in \cite{Mi}, one can turn things around and easily deduce 
Theorem~\ref{prop:HW1-main} from Proposition~\ref{prop:HW2-fin-proj-dim} and the vanishing result Theorem~\ref{prop:second-rigidity}.

\section*{Acknowledgments}
We would like to thank the referee for a careful reading of the paper.

\bibliographystyle{plain}

\enlargethispage{\baselineskip}

\end{document}